\definecolor{colormy}{rgb}{0.8,0.05,0.05}
\definecolor{mycolor}{rgb}{0.25,0.99,0.25}
\tikzstyle directed=[postaction={decorate,decoration={markings,
    mark=at position #1 with {\arrow{>}}}}]
\tikzstyle rdirected=[postaction={decorate,decoration={markings,
    mark=at position #1 with {\arrow{<}}}}]
\newcommand{\Hom}{\mathrm{Hom}}
\newcommand{\End}{\mathrm{End}}
\newcommand{\ord}{\mathrm{ord}}
\newcommand{\mo}{\mathrm{mod }}
\def\Z{{\mathbb Z}}
\def\Q{{\mathbb Q}}
\def\F{{\mathcal F}}
\def\Fu{{\mathbb F}_p}
\def\Ne{\mathcal N}
\def\pr{\mathrm{pr}}
\def\Ind{\mathrm{Ind}}
\def\Tr{\mathrm{Tr}}
\theoremstyle{definition}
\newtheorem{thm}{Theorem}[section]
\newtheorem{cor}[thm]{Corollary}
\newtheorem{lem}[thm]{Lemma}
\newtheorem{prop}[thm]{Proposition}
\theoremstyle{definition}
\newtheorem{examplecounter}{Example}
\newtheorem{remarkcounter}{Remark}
\numberwithin{equation}{section}
\title{Higher Jones algebras and their simple modules}
\author{Henning Haahr Andersen}
\email{h.haahr.andersen@gmail.com}
\date{}							% Activate to display a given date or no date
\begin{document}

\begin{abstract}
Let $G$ be a connected reductive algebraic group over a field of positive characteristic $p$ and denote by $\mathcal T$ the category of tilting modules for $G$. The higher Jones algebras are the endomorphism algebras of objects in the fusion quotient category of $\mathcal T$. We determine the simple modules and their dimensions for these semisimple algebras as well as their quantized analogues. This provides a general approach for determining various classes of simple modules for many well-studied algebras such as group algebras for symmetric groups, Brauer algebras, Temperley--Lieb algebras, Hecke algebras and $BMW$-algebras. We treat each of these cases in some detail and give several examples.

\end{abstract}

\maketitle

\section{Introduction}
In \cite{ILZ} the authors define Jones algebras as certain quotients of Temperley--Lieb algebras. They also show that these algebras may be identified with the endomorphism algebras over the quantum group for $\mathfrak sl_2$ of fusion tensor powers of its natural vector representation. In this paper we study more generally such endomorphism algebras for arbitrary reductive groups in positive characteristics and their quantized root of unity analogues.  We call these semisimple algebras {\it higher Jones algebras}.  In the quantum $\mathfrak sl_2$-case they coincide with the Jones algebras from \cite{ILZ}. Our main result is an algorithm which determines the dimensions of the simple modules for these higher Jones algebras. 

As the first important example and as a role model for our study consider the general linear group $GL_n$. Together with the corresponding root of unity quantum case we show how this gives interesting semisimple quotients of the group algebras for symmetric groups as well as of the corresponding Hecke algebras, and it allows us to determine the dimensions of certain classes of simple modules for these algebras. The results in these cases were obtained a long time ago, see \cite{Ma1} and \cite{Kl} for the modular case (using similar, respectively different techniques), and \cite{GW} for the Hecke algebra case. Nevertheless we treat this case in some detail as it will serve as  role model for the general case. In fact, one of our key points is to give a unified treatment of a number of other cases, showing that they can be handled in similar ways.

In order to explain our general strategy we pass now to an arbitrary reductive algebraic group G defined over a field k of prime
characteristic. 
The category of tilting modules for $G$ has a quotient category called the fusion category, see \cite{A92}, \cite{AS}. Objects in this category may be identified with certain semisimple modules for $G$ and the higher Jones algebras are then defined as the endomorphism algebras of such objects and of their fusion tensor powers.  The main result in \cite {AST1} says that any endomorphism algebra of a tilting module for $G$ has a natural cellular algebra structure. We show that the higher Jones algebras inherite a cellular structure, and exploiting this we are able to compute the dimensions of their simple modules. This applies to the corresponding quantum case as well. 

If $G$ is one of the classical groups $GL(V), SP(V)$ or $O(V)$ the module $V$ is (except for $O(V)$ in characteristic $2$ when $\dim V$ is odd) a tilting module for $G$ and via Schur--Weyl duality the endomorphism algebras of the fusion tensor powers of $V$ lead to semisimple quotient algebras of the group algebras of symmetric groups and Brauer algebras. This should  be compared with the results in \cite{Ma1} and \cite{We2}, respectively. In the corresponding quantum cases we obtain semisimple quotients of Hecke algebras and $BMW$-algebras (compare \cite{Kl},  \cite{We1}, and \cite{We3}). In all cases we obtain effective algorithms for computing the dimensions
of the corresponding classes of simple modules. We have illustrated this by giving a number of concrete examples.

We want to point out that our approach, based on the theory of tilting modules and the cellular structure on their endomorphism rings as developed in \cite{AST1}, gives a general method for handling many more cases than those mentioned above. To mention just one big family of algebras - which fits into our framework and which are similar in principle to the examples given so far - take again an arbitrary reductive algebraic group $G$ and let $V$ be a simple Weyl module for $G$. This could e.g. be a Weyl module with minuscule highest weight or with highest weight belonging to the bottom alcove of the dominant chamber. Then our approach applies to the fusion tensor powers of $V$ or more generally to fusion tensor products of finite families of simple Weyl modules. As a result we get algorithms for the dimensions of certain simple modules for the corresponding endomorphism algebras. However, in only few cases are these algebras related to ``known" algebras and we have chosen to limit ourselves to the above examples.

The paper is organized as follows. In Section 2 we first set up notation etc. for a general reductive group $G$ and then make this explicit in the case where $G$ is a group a classical type. We shall rely heavily on tilting modules for $G$ and in Section 3 we start out by recalling the basic facts that we will need from this theory. In addition, this section establishes results on fusion tensor products which we then apply in Section 4 to symmetric groups and in Section 5 to Brauer algebras. In Section 6 we turn to quantum groups at roots of unity. Here we prove results analogous to the ones we obtained in the modular case  and in Sections 7 and 8 we apply these to Hecke algebras for symmetric groups and to $BMW$-algebras, respectively.

\vskip .5 cm 
{\bf Acknowledgments. }
Thanks to the referee for a quick and careful reading as well as for her/his many useful comments and corrections.

\section{Reductive algebraic groups} 
This section introduces notation and contains a few basic facts about reductive algebraic groups and their representations over a field of prime characteristic. We shall be rather brief and refer the reader to \cite{RAG} for further details. We also deduce some specific facts needed later on for each of the groups of classical type.

\subsection {General notation} \label{general}
Suppose $G$ is a connected reductive algebraic group over a field $k$. We assume $k$ has characteristic $p > 0$. In this paper all modules will be finite-dimensional.
 
Let $T$ be a maximal torus in $G$, and denote by $X =X(T)$ its character group. In the root system $R \subset X$ for $(G,T)$ we choose a set of positive roots $R^+$, and denote by $X^+ \subset X$  the corresponding cone of dominant characters. Then $R^+$ defines an ordering $\leq$ on $X$. It also determines uniquely a Borel subgroup $B$ whose roots are the set of negative roots $-R^+$.

Denote by $S$ the set of simple roots in $R^+$. The reflection $s_\alpha$ corresponding to $\alpha \in S$ is called a simple reflection. The set of simple reflections generates the Weyl group $W$  for $R$. We can identify $W$ with 
$N_G(T)/T$. Then we see that $W$ acts naturally on $X$:  $\lambda \mapsto w(\lambda), \lambda \in X, w \in W$. 
%We denote by $\sgn: W \rightarrow \{\pm 1\}$ the sign function on $W$. If $w \in W$ then $\sgn(w)$ is $1$, respectively $-1$ if $w$ is a product of an even, respectively odd,  number of simple reflections. 
In addition to this action of $W$ on $X$ we shall also consider the so-called dot-action given by: $w \cdot \lambda = w(\lambda + \rho) - \rho, w \in W, \lambda \in X$. As usual, $\rho$ is half the sum of the positive roots.

In the category of $G$-modules we have the family of standard modules $\Delta(\lambda)$, and likewise the family of costandard modules $\nabla(\lambda)$. Here $\lambda$ runs through the set of dominant weights $X^+$ and $\Delta(\lambda)$ is also known as  the Weyl module with highest weight $\lambda$. The dual Weyl module $\nabla(\lambda)$ is then $\Delta(-w_0 \lambda)^*$ where $w_0$ denotes the longest element in $W$. 

The simple module with highest weight $\lambda$ may be realized as the head of $\Delta(\lambda)$ as well as the socle of $\nabla(\lambda)$. Recall that there is up to scalars a unique non-zero homomorphism 
\begin{equation} \label{can} c_\lambda: \Delta(\lambda) \rightarrow \nabla(\lambda),
\end{equation}
namely the one factoring through $L(\lambda)$.

A $G$-module $M$ is said to have a $\Delta$-filtration if it has submodules $M^{i}$ with 
$$ 0=M^0 \subset M^1 \subset \dots \subset M^r = M, \text { where } M^{i+1}/M^{i} \simeq \Delta(\lambda_i) \text { for some  } \lambda_i \in X^+.$$
One defines $\nabla$-filtrations similarly.

If $M$ has a $\Delta$-filtration we set $(M:\Delta(\mu))$ equal to the number of occurrences of $\Delta(\mu)$ in such a filtration (note that these numbers are uniquely determined and independent of which $\Delta$-filtration we choose). When $M$ has a $\nabla$-filtration the numbers $(M:\nabla(\mu))$ are defined analogously.

A crucial result concerning modules with a $\Delta$-filtration says that, if $M$ and $M'$ both have a $\Delta$-filtration, then so does $M\otimes M'$. This is the Wang--Donkin-
-Mathieu theorem, see \cite{Wa}, \cite{Do-book}, and \cite{Ma}.

For $n \in \Z$ and $\alpha \in S$ we denote by $s_{\alpha, n}$ the affine reflection determined by
$$s_{\alpha, n}(\lambda) = s_\alpha (\lambda) - np\alpha.$$

The affine Weyl group $W_p$ is the group generated by all $s_{\alpha, n}$ where $ \alpha \in S$ and $ n \in \Z$ (note that in the Bourbaki convention this is the affine Weyl group corresponding to the dual root systen $R^\vee$).

The linkage principle \cite{A80a} says that, whenever $L(\lambda)$ and $L(\mu)$ are two composition factors of an indecomposable $G$- module, then $\mu \in W_p \cdot \lambda$. It follows that $M$ splits into a direct sum of submodules according to the orbits of $W_p$ in $X$. More precisely, if we set
 $$A(p)= \{\lambda \in X  | 0 < \langle \lambda + \rho, \alpha^\vee \rangle < p \text { for all } \alpha \in R^+\},$$ 
called the bottom dominant alcove, then the closure 
$$\overline A(p) =  \{\lambda \in X  | 0 \leq \langle \lambda + \rho, \alpha^\vee \rangle \leq p \text { for all } \alpha \in R^+\}$$
 is a fundamental domain for the dot-action of $W_p$ on $X$. We have
$$ M = \bigoplus_{\lambda \in \overline A(p)} M[\lambda], $$
with $M[\lambda]$ equal to the largest submodule in $M$ whose composition factors $L(\mu)$  all have $\mu \in W_p \cdot \lambda$. 

\begin{remarkcounter} \label{alcove A}
\begin{enumerate}
\item [a)]
As an immediate consequence of the strong linkage principle \cite{A80a} we have
$$ \Delta(\lambda) = L(\lambda) = \nabla(\lambda)  \text { for all } \lambda \in \overline A \cap X^+.$$
\item [b)] We have $A(p) \neq \emptyset$ if and only if $p >  \langle \rho, \alpha^\vee \rangle$ for all roots $\alpha$, i.e. if and only if $p \geq h$, where $h$ is the Coxeter number for $R$.
\end{enumerate}
\end{remarkcounter}

\subsection{The general linear groups} \label{GL}
Let $V$ be a vector space over $k$. The reductive group $GL(V)$ plays a particularly important role in this paper. In this section we make the above notations and remarks explicit for the group $GL(V)$.

We set $n = \dim V$ and choose a basis $\{v_1, v_2, \cdots , v_n\}$ for $V$. Then $G_n = GL(V)$ identifies with $GL_n(k)$ and the set $T_n$ of diagonal matrices in $G_n$ is a maximal torus. The character group $X_n = X(T_n)$ is the free abelian group with basis $\epsilon_i$, $i=1, 2, \cdots ,n$ where $\epsilon_i: T_n \rightarrow k^{\times}$ is the homomorphism mapping a diagonal matrix into its $i$'th entry. If $\lambda \in X_n$, we shall write
$$\lambda = (\lambda_1, \lambda_2, \cdots , \lambda_n),$$ 
when $\lambda = \sum_1^n \lambda_i \epsilon_i.$ 
The root system for $(G_n,T_n)$ is 
$$R = \{\epsilon_i -\epsilon_j | i \neq j\}.$$ 
It is of type $A_{n-1}$. Our choice of simple roots $S$ will be
$$S = \{\alpha_i = \epsilon_i -\epsilon_{i+1} | i = 1, 2, \cdots , n-1\}$$
inside the set of positive roots $R^+$ consisting of all $\epsilon_i - \epsilon_j$ with $i<j$. 

We set 
$$\omega_i = \epsilon_1 + \epsilon_2 + \cdots + \epsilon_i, \; i = 1, \cdots , n.$$ 
Then $\{\omega_1, \cdots , \omega_n\}$ is another basis of $X_n$. Note that $\omega_n$ is the determinant and thus, is trivial on the intersection of $T_n$ with $SL_n(k)$. Consider 
$$\rho' = \omega_1 + \cdots + \omega_n = (n, n-1, \cdots , 1).$$ 
Then $\rho' = \rho +\frac{1}{2}  (n+1) \omega_n$ and we shall prefer to work with $\rho'$ instead of $\rho$ (note that, if $n$ is even, $\rho \notin X_n$ whereas $\rho' \in X_n$ for all $n$). As $\omega_n$ is fixed by $W$, the dot-action of $W$ on $X$ is unchanged when we replace $\rho$ by $\rho'$.

We have an inner product on $X_n$ given by $(\epsilon_i, \epsilon_j) = \delta_{i,j}$. It satisfies $(\omega_i, \alpha_j) = \delta_{i,j}$, $i,j = 1, 2, \cdots n-1$, i.e. $\omega_1, \cdots , \omega_{n-1}$ are the fundamentals weights in $X_n$. On the other hand, $(\omega_n, \alpha_j) = 0$ for all $j$. Hence, $(\rho', \alpha_j) = 1$ for all $j = 1, 2, \cdots , n-1$.

The set of dominant weights  is 
$$X_n^+ = \{\lambda \in X_n | \lambda_1 \geq \lambda_2 \geq  \cdots \geq \lambda_n\} = \{\sum_1^n m_i \omega_i | m_i \in \Z_{\geq 0}, i= 1, 2, \cdots , n-1, \;\ m_n \in \Z\}.$$ 
If $\lambda \in X_n^+$ has $\lambda_n \geq 0$,
then $\lambda$ may be identified with a partition of $|\lambda| = \lambda_1 + \lambda_2 + \cdots + \lambda_n$.

The bottom alcove will be denoted $A_n(p)$.  When $n > 1$ it is given by
$$ A_n(p) = \{\lambda \in X_n^+ | \lambda_1 - \lambda_n \leq p - n\}.$$
We have $A_n(p) \neq \emptyset$ if and only if $p \geq n$. In particular, $A_2(p)$ is always non-empty.
In the special case $n = 1$ the group $G_1$ is the $1$-dimensional torus. In that case $X_1 = \Z \epsilon_1$ and $A_1(p) = \Z \epsilon_1$. Note that for any $r \in \Z$ the Weyl module $\Delta_1(r \epsilon_1)$ is the $1$-dimensional $G_1$ module  $k_{r\epsilon_1}$.

\begin{remarkcounter} \label{A for type A}
 The natural module $V$ for $G_n$ has weights $\epsilon_1, \cdots , \epsilon_n$. It is simple because the highest weight $\epsilon_1$ is minuscule. We have $\epsilon_1 \in A_n(p)$ if and only if $p > n$. 
\end{remarkcounter}

\subsection{The symplectic groups} \label{Sp}

Let now $V$ be a $2n$-dimensional symplectic vector space over $k$ with a fixed symplectic form, and consider the semisimple algebraic group $G_n = SP(V)$ consisting of those elements in $GL(V)$ which respect this form. This is naturally a subgroup of $GL(V)$. Note that $G_1 = SL_2(k)$.  We let $T_n$ be the maximal torus in $G_n$ obtained as the intersection of the maximal torus in $GL(V)$ with $G_n$. In the notation from Section \ref{GL} the restrictions to $T_n$ of $\epsilon_1, \cdots , \epsilon_n$ form a basis of $X_n = X(T_n)$. The root system for $(G_n, T_n)$ consists of the the elements
$$\{\pm \epsilon_i \pm \epsilon_j, \pm2 \epsilon_i | 1 \leq i \neq j \leq n \},$$ 
and is of type $C_n$. With respect to the usual choice of positive roots the set of dominant weights is 
$$X_n^+ = \{\lambda = \sum_i \lambda_1 \epsilon_i | \lambda_i \geq \lambda_2 \geq \cdots \geq \lambda_n \geq 0 \}.$$

The bottom dominant alcove is also in this case denoted $A_n(p)$. It is given by 
$$ A_n(p) = \begin{cases} \{\lambda \epsilon_1 | 0 \leq \lambda \leq p-2\} \text { if } n = 1, \\\{\lambda \in X_n^+ | \lambda_1 + \lambda_2 \leq p - 2n \} \text { if } n > 1. \end{cases}$$
When $n > 1$ we have $A_n(p) \neq \emptyset$ if and only if $p \geq 2n$, whereas $A_1(p) \neq \emptyset$ for all $p$.

\begin{remarkcounter} \label{A for type C}
The natural module $V$ for $G_n$ is simple for all $p$ as its highest weight $\epsilon_1$ is minuscule. It has weights $\pm \epsilon_1, \cdots , \pm \epsilon_n$. Note that, for $n > 1$, we have $\epsilon_1 \in A_n(p)$ if and only if $p > 2n$, whereas for $n=1$ the condition is $p > 2$.
\end{remarkcounter}

\subsection{The orthogonal groups} \label{O}
Consider next a vector space $V$ over $k$ equipped with a non-degenerate, symmetric bilinear form.  Then the orthogonal group $O(V)$ is the subgroup of $GL(V)$ consisting of those elements which preserve the bilinear form on $V$. We shall separate our discussion into the case where $\dim V$ is odd and the case where $\dim V$ is even. 
\subsubsection{Type $B_n$}
Assume that $\dim V$ is odd, say $\dim V = 2n + 1$. Then we set $G_n = O(V)$. Again in this case we have $G_1 \simeq SL_2(k)$. However, the module $V$ for $G_1$ is the $3$-dimensional standard module for $SL_2(k)$. The root system $R$ for $G_n$ has type $B_n$ and may be taken to consist of the elements 
$$ R = \{\pm \epsilon_i \pm \epsilon_j, \pm \epsilon_i | 1 \leq i \neq j \leq n \}$$
in $X_n = \oplus _{i=1}^n \Z\epsilon_i$. The set of dominant weights is 
$$X_n^+ = \{ \sum_i \lambda_i \epsilon_i \in X_n | \lambda_1 \geq \lambda_2 \geq \cdots \geq \lambda_n \geq 0 \}.$$
In this case the bottom dominant alcove $A_n(p)$ is given by
$$ A_n(p) =  \begin{cases} \{\lambda \epsilon_1 | 0 \leq \lambda \leq p-2\} \text { if } n = 1,  \\\{\lambda \in X_n^+ | 2 \lambda_1 \leq p - 2n \} \text { if } n > 1. \end{cases}$$
We have $A_n(p) \neq \emptyset$ if and only if $p > 2n$ (except for $n = 1$). 
\begin{remarkcounter} \label{A for type B}
Unlike in the previous cases the highest weight of $V$ is no longer minuscule. However,  we still have $V = \Delta(\epsilon_1)$ is simple for all $p > 2$, \cite[Section II.8.21]{RAG}. It has weights $\pm \epsilon_1, \cdots , \pm \epsilon_n$ together with $0$. Note that $\epsilon_1 \in A_n(p)$ if and only if $p > 2n + 2$ except for $n=1$ where the condition is $p > 2$.
\end{remarkcounter}

\subsubsection{Type $D_n$}
Assume that $\dim V$ is even, say $\dim V = 2n$. We set again $G_n = O(V)$. The corresponding root system $R$ then has type $D_n$ and may be taken to consist of the elements  
$$ R = \{\pm \epsilon_i \pm \epsilon_j | 1 \leq i \neq j \leq n \}$$ 
in $X_n =\{\lambda \in \oplus _{i=1}^n \frac{1}{2} \Z\epsilon_i | \lambda_i - \lambda_j \in \Z  \text { for all } i, j\}$. The set of dominant weights is 
$$X_n^+ = \{ \sum_i \lambda_1 \epsilon_i \in X_n | \lambda_i \geq \lambda_2 \geq \cdots \geq \lambda_{n-1} \geq  |\lambda_n| \}.$$
In this case the bottom dominant alcove $A_n(p)$ is given by
$$ A_n(p) =  \begin{cases} \{\lambda \epsilon_1 | \lambda \in Z, 0 \leq \lambda \leq p-2\} \text { if } n = 1,  \\\{\lambda \in X_2^+ |  \lambda_1 \pm \lambda_2 \leq p - 2 \} \text { if } n = 2, \\\{\lambda \in X_n^+ |  \lambda_1 + \lambda_2 \leq p - 2n + 2 \} \text { if } n > 2. \end{cases}$$
When $n>2$ we have $A_n(p) \neq \emptyset$ if only if $p > 2n - 2$, whereas $A_1(p)$ and $A_2(p)$ are always non-empty.

\begin{remarkcounter} \label{A for type D}
$V = \Delta(\epsilon_1)$ is simple for all $p $ (its highest weight is minuscule). It has weights $\pm \epsilon_1, \cdots , \pm \epsilon_n$. Note that, for $n > 2$, we have  $\epsilon_1 \in A_n(p)$ if and only $p > 2n - 2$, whereas the condition for both $n=1$ and $n=2$ is $p > 2$.
\end{remarkcounter}

\section{Tilting modules for reductive algebraic groups}
We return to the situation of a general reductive group $G$ and use the notation from Section \ref{general}.  We very briefly recall the basics about tilting modules for $G$  (referring to \cite[Section 2]{Do} or \cite[ChapterII.E]{RAG} for details), and prove the results which we then apply in the next two sections. Moreover, we recall from \cite[Section 4]{AST1} a few facts about the cellular algebra structure on endomorphism rings for tilting modules for $G$, which we also need. 

\subsection {Tilting theory for $G$}
A $G$-module $M$ is called tilting if it has both a $\Delta$- and a $\nabla$-filtration. It turns out that for each $\lambda \in X^+$ there is a unique (up to isomorphisms) indecomposable tilting module $T(\lambda)$ with highest weight $\lambda$,  and up to isomorphisms these are the only indecomposable tilting modules, see \cite[Theorem 1.1] {Do}.  The Weyl module $\Delta(\lambda)$ is a submodule of $T(\lambda)$, while the dual Weyl module $\nabla(\lambda)$ is a quotient. The composite of the inclusion $\Delta(\lambda) \to T(\lambda)$ and the quotient map $T(\lambda) \to \nabla(\lambda)$ equals the homomorphism  $c_\lambda$ from (\refeq{can}) (up to a non-zero constant in $k$).

We have  the following elementary (and no doubt well-known) lemma.

\begin{lem} \label{quotient} Let $M$ be a $G$-module which contains two submodules $M_1$ and $M_2$ such that $M = M_1 \oplus M_2$. Denote by $i_j: M_j \rightarrow M$, respectively $\pi_j: M \rightarrow M_j$ the natural inclusion, respectively projection, $j = 1,2$. Suppose $f \circ g = 0$ for all $f \in \Hom_G(M_2, M_1)$ and $g \in \Hom_G(M_1, M_2)$. Then the natural map 
$$\phi: \End_G(M) \rightarrow \End_G(M_1)$$
which takes  $h \in \End_G(M)$ into $\pi_1 \circ h \circ i_1 \in \End_G(M_1)$
is a surjective algebra homomorphism.
\end{lem}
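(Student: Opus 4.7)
The strategy is straightforward: verify the algebra homomorphism property by expanding using the identity $\mathrm{id}_M = i_1\pi_1 + i_2\pi_2$, then exhibit an explicit right inverse for $\phi$ to get surjectivity. The hypothesis on Hom spaces enters in exactly one place, killing an unwanted cross term.

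First I would note that $\phi$ is obviously $k$-linear and satisfies $\phi(\mathrm{id}_M)=\pi_1\circ i_1 = \mathrm{id}_{M_1}$. The content is the multiplicativity. For $h, h'\in\End_G(M)$, I compute
\[
\phi(h)\circ\phi(h') \;=\; \pi_1\circ h\circ i_1\circ\pi_1\circ h'\circ i_1,
\]
while, inserting $\mathrm{id}_M = i_1\pi_1 + i_2\pi_2$ between $h$ and $h'$,
\[
\phi(h\circ h') \;=\; \pi_1\circ h\circ h'\circ i_1 \;=\; \pi_1\circ h\circ i_1\circ\pi_1\circ h'\circ i_1 \;+\; \pi_1\circ h\circ i_2\circ\pi_2\circ h'\circ i_1.
\]
The first summand is $\phi(h)\circ\phi(h')$, so multiplicativity reduces to showing that the second summand vanishes. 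But $f:=\pi_1\circ h\circ i_2$ lies in $\Hom_G(M_2,M_1)$ and $g:=\pi_2\circ h'\circ i_1$ lies in $\Hom_G(M_1,M_2)$, and the extra term is precisely $f\circ g$, which vanishes by hypothesis. Hence $\phi$ is an algebra homomorphism.

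For surjectivity I would define a map $\psi:\End_G(M_1)\to\End_G(M)$ by $\psi(e) = i_1\circ e\circ\pi_1$. Then
\[
\phi(\psi(e)) \;=\; \pi_1\circ i_1\circ e\circ\pi_1\circ i_1 \;=\; e,
\]
using $\pi_1\circ i_1 = \mathrm{id}_{M_1}$, so $\phi$ has a right inverse and is surjective.

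There is no real obstacle here; the only thing to keep track of is that the hypothesis is stated with $f\circ g$ (rather than $g\circ f$) and one must check that the cross term produced by the idempotent decomposition is indeed of that shape. Note that the argument does not require $f\circ g$ and $g\circ f$ to both vanish; only the former is used, which matches the stated hypothesis.
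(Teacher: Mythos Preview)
Your proof is correct and essentially identical to the paper's own argument: both insert the idempotent decomposition $\mathrm{id}_M = i_1\pi_1 + i_2\pi_2$ to reduce multiplicativity to the vanishing of the cross term $(\pi_1 h i_2)\circ(\pi_2 h' i_1)$, which is of the form $f\circ g$ as in the hypothesis. The only cosmetic difference is that you exhibit an explicit section $\psi$ for surjectivity whereas the paper simply declares surjectivity obvious.
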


\begin{proof} The surjectivity of $\phi$ is obvious, so we just have to check that $\pi_1 \circ h' \circ h \circ i_1 = \pi_1 \circ h' \circ i_1 \circ \pi_1 \circ h \circ i_1$ for all $h', h \in \End_G(M)$. However, $h \circ i_1 = i_1 \circ \pi_1\circ h \circ i_1 +  i_2 \circ \pi_2\circ h \circ i_1$, and by our assumption we see that $(\pi_1 \circ h' \circ i_2) \circ  (\pi_2\circ h \circ i_1) = 0$. The desired equality follows.
\end{proof}

Let $Q$ be a tilting module for $G$. Then $Q$ splits into indecomposable summands as follows
$$ Q = \bigoplus_{\lambda \in X^+} T(\lambda)^{(Q:T(\lambda))}$$
for unique $(Q:T(\lambda)) \in \Z_{\geq 0}$. 

Set now 
$$Q^{\F}= \bigoplus_{\lambda  \in A(p)} T(\lambda)^{(Q:T(\lambda))} \text { and } Q^{\Ne} = \bigoplus_{\lambda  \in X^+\setminus A(p)}T(\lambda)^{(Q:T(\lambda))}.$$ 
For reasons explained in Section \ref{Fusion} we call $Q^{\F}$ the fusion summand of $Q$ and $Q^{\Ne}$ the negligible summand of $Q$. Then:

\begin{lem} \label{no composites} 
If $f \in \Hom_G(Q^{\F}, Q^{\Ne})$ and $g \in \Hom_G(Q^{\Ne}, Q^{\F})$, then $g \circ f = 0$.

\end{lem}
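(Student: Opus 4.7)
The plan is to reduce to checking, summand by summand, a composition $T(\lambda) \xrightarrow{f} T(\mu) \xrightarrow{g} T(\lambda')$ with $\lambda,\lambda'\in A(p)$ and $\mu\in X^+\setminus A(p)$, and then to use the fact from Remark 2.1.1(a) that for $\lambda\in A(p)$ we have $T(\lambda)=\Delta(\lambda)=L(\lambda)=\nabla(\lambda)$, so in particular $T(\lambda)$ is simple.

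By writing $Q^{\F}$ and $Q^{\Ne}$ as direct sums of indecomposable tilting modules indexed by $A(p)$ and $X^+\setminus A(p)$ respectively, the maps $f$ and $g$ decompose into components between summands, and it suffices to show that each composite $g_{i'j}\circ f_{ji}: T(\lambda)\to T(\mu)\to T(\lambda')$ vanishes. Since $T(\lambda)$ is simple, the map $f_{ji}$ is either zero or injective; dually, since $T(\lambda')$ is simple, $g_{i'j}$ is either zero or surjective. If either of them is zero there is nothing to prove, so assume both are nonzero.

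Now $g_{i'j}\circ f_{ji}\in\Hom_G(L(\lambda),L(\lambda'))$. If $\lambda\neq\lambda'$, this Hom-space is zero by Schur's lemma and we are done. If $\lambda=\lambda'$, then $\End_G(L(\lambda))=k$ (the highest weight space of $L(\lambda)$ is one-dimensional and is preserved by any endomorphism), so a nonzero composite would be an isomorphism. Then $f_{ji}$ would split, exhibiting $L(\lambda)$ as a direct summand of the indecomposable tilting module $T(\mu)$. This forces $T(\mu)\cong L(\lambda)$, hence $\mu=\lambda\in A(p)$, contradicting $\mu\in X^+\setminus A(p)$. Therefore $g_{i'j}\circ f_{ji}=0$ in this remaining case as well.

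The main conceptual point — and the only step one has to be careful about — is the use of the simplicity of the indecomposable tilting modules labelled by the bottom alcove; once this is granted, Schur's lemma together with indecomposability of $T(\mu)$ closes the argument, and no cellular or fusion-theoretic machinery is needed here.
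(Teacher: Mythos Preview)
Your proof is correct and follows essentially the same approach as the paper: reduce to indecomposable summands, use that $T(\lambda)=L(\lambda)$ for $\lambda\in A(p)$ so that any composite through $T(\mu)$ is a scalar endomorphism, and note that a nonzero scalar would make $T(\lambda)$ a summand of the indecomposable $T(\mu)$, a contradiction. The paper's argument is slightly terser in that it reduces directly to the case $Q^{\F}=T(\lambda)$, $Q^{\Ne}=T(\mu)$ (so the source and target of $g\circ f$ coincide and the $\lambda\neq\lambda'$ case never arises), but this is a cosmetic difference.
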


\begin{proof} It is enough to check the lemma in the case where $Q^{\F} = T(\lambda)$ and $Q^{\Ne} = T(\mu)$ with $\lambda \in A(p)$ and $\mu \in X^+\setminus A(p)$. By Remark \ref{alcove A}a) we have $T(\lambda) = \Delta(\lambda) = L(\lambda)$. Hence, in this case $g \circ f$ is up to scalar the identity on $T(\lambda)$. If the scalar were non-zero, $T(\lambda)$ would be a summand of $T(\mu)$, which contradicts the indecomposability of $T(\mu)$.
\end{proof}

\begin{thm} \label{fusion-quotient}
The natural map $\phi: \End_G(Q) \rightarrow \End_G(Q^{\F})$ is a surjective algebra homomorphism. The kernel of $\phi$ equals 
$$\{h \in \End_G(Q) | \Tr(i_\lambda \circ h \circ \pi_\lambda) = 0 \text { for all homomorphisms } i_\lambda: T(\lambda) \rightarrow Q, \; \pi_\lambda: Q \rightarrow T(\lambda),\; \lambda \in X^+\}.$$
\end{thm}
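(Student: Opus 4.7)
The plan is to handle surjectivity directly from Lemma~\ref{quotient} and then prove the two inclusions of the kernel description by analysing the block structure of endomorphisms with respect to the splitting $Q = Q^{\F} \oplus Q^{\Ne}$. Throughout I interpret the expression $i_\lambda \circ h \circ \pi_\lambda$ as the well-defined endomorphism $\pi_\lambda \circ h \circ i_\lambda \in \End_G(T(\lambda))$, which is the only reading for which the trace makes sense. Surjectivity is then immediate: apply Lemma~\ref{quotient} with $M_1 = Q^{\F}$, $M_2 = Q^{\Ne}$, whose hypothesis is precisely the content of Lemma~\ref{no composites}. Denoting by $p_{\F}, p_{\Ne}$ and $\iota_{\F}, \iota_{\Ne}$ the projections and inclusions attached to this splitting, and setting $h_{ab} = p_b \circ h \circ \iota_a$ for $a,b \in \{\F,\Ne\}$, one has $\phi(h) = h_{\F\F}$, so $\ker\phi = \{h : h_{\F\F} = 0\}$.

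\emph{Forward inclusion.} Assume $h_{\F\F} = 0$. For $\lambda \in A(p)$ we have $T(\lambda) = L(\lambda)$ by Remark~\ref{alcove A}(a). Writing $i_\lambda = \iota_{\F} i_\lambda^{\F} + \iota_{\Ne} i_\lambda^{\Ne}$ and $\pi_\lambda = \pi_\lambda^{\F} p_{\F} + \pi_\lambda^{\Ne} p_{\Ne}$, each of the three surviving terms in the expansion of $\pi_\lambda h i_\lambda$ factors as a composite of the form $L(\lambda) \to Q^{\Ne} \to L(\lambda)$. Such a composite is a scalar multiple of $\mathrm{id}_{L(\lambda)}$ by simplicity, and a nonzero scalar would exhibit $L(\lambda) = T(\lambda)$ as an indecomposable direct summand of $Q^{\Ne} = \bigoplus_{\mu \notin A(p)} T(\mu)^{(Q:T(\mu))}$, contradicting Krull--Schmidt -- exactly the reasoning used in the proof of Lemma~\ref{no composites}. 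Hence $\pi_\lambda h i_\lambda = 0$ and in particular its trace vanishes. For $\lambda \in X^{+} \setminus A(p)$ the trace vanishes for \emph{any} $h \in \End_G(Q)$: $\End_G(T(\lambda))$ is local with residue field $k$, so every element decomposes as $c\cdot\mathrm{id} + n$ with $n$ nilpotent (hence traceless), and $\dim T(\lambda) \equiv 0 \pmod{p}$ because $T(\lambda)$ is negligible; therefore the trace equals $c\dim T(\lambda) = 0$ in $k$.

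\emph{Backward inclusion.} Assume the trace condition holds for all $\lambda \in X^{+}$ and write $Q^{\F} = \bigoplus_{\lambda \in A(p)} T(\lambda)^{m_\lambda}$ with $m_\lambda = (Q:T(\lambda))$. Since the simples $L(\lambda)$ with $\lambda \in A(p)$ are pairwise non-isomorphic, $\Hom_G(T(\lambda), T(\mu)) = 0$ for $\lambda \neq \mu$ in $A(p)$, so $h_{\F\F}$ is block diagonal with each $\lambda$-block an $m_\lambda \times m_\lambda$ scalar matrix. Taking $i_\lambda$ and $\pi_\lambda$ to be the canonical inclusion and projection of individual copies of $T(\lambda)$ inside $Q^{\F}$ picks out an arbitrary entry $c$ of this matrix and forces $c\dim T(\lambda) = 0$; the Weyl dimension formula gives $\dim T(\lambda) \not\equiv 0 \pmod{p}$ for $\lambda \in A(p)$, since every factor $\langle\lambda+\rho,\alpha^\vee\rangle$ and $\langle\rho,\alpha^\vee\rangle$ lies strictly between $0$ and $p$, so $c = 0$ throughout and $h_{\F\F} = 0$. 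The main obstacle is the negligibility input $\dim T(\lambda) \equiv 0 \pmod{p}$ for $\lambda \notin A(p)$ -- a standard but nontrivial fact from modular tilting theory that underlies the whole construction of the fusion category -- which is what makes the trace condition genuinely vacuous outside the bottom alcove and hence allows the ``for all $\lambda \in X^{+}$'' quantifier to correctly characterise the kernel using only the $A(p)$-summands.
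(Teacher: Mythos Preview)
Your proof is correct and follows essentially the same approach as the paper: surjectivity via Lemmas~\ref{quotient} and~\ref{no composites}, then the kernel description via the dichotomy that $\dim T(\lambda)$ is divisible by $p$ for $\lambda \notin A(p)$ (making the trace condition vacuous there) and prime to $p$ for $\lambda \in A(p)$ (so the trace detects the $\mathcal F$-block of $h$). Two small remarks: your phrase ``$m_\lambda \times m_\lambda$ scalar matrix'' should read ``matrix over $k$'' (the block need not be a multiple of the identity, and indeed your next sentence picks out an arbitrary entry); and where you invoke $p \mid \dim T(\lambda)$ for $\lambda \notin A(p)$ as a black box, the paper actually sketches the argument --- linkage for $p$-singular $\lambda$, translation for $p$-regular $\lambda$ --- so you might cite \cite[Section~5]{A92} there.
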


\begin{proof} The combination of Lemma \ref{quotient} and Lemma \ref{no composites} immediately gives the first statement. To prove the claim about the kernel of $\phi$ we first observe that the endomorphisms $i_\lambda \circ h \circ \pi_\lambda$ of $T(\lambda)$ are either nilpotent, or a constant times the identity. Now, nilpotent endomorphisms clearly have trace zero. If $ \lambda \notin A(p)$, then $\dim T(\lambda)$ is divisible by $p$. This holds when $\lambda$ is $p$-singular (i.e. if there exists a root $\beta$ with $\langle \lambda + \rho, \beta^{\vee} \rangle$ divisible by $p$), because then the linkage principle implies that $\mu$ is also $p$-singular for all $\mu \in X^+$ for which $\Delta(\mu)$ occurs in a $\Delta$-filtration of $T(\lambda)$. For a $p$-regular $\lambda$ it then follows by an easy translation argument, see \cite[Section 5]{A92} (the argument used there deals with the quantum case but applies just as well in the modular case). So when $\lambda \notin A(p)$ all endomorphisms of $T(\lambda)$ have trace zero. In particular, we have $\Tr(i_\lambda \circ h \circ \pi_\lambda) = 0$ for all $h \in \End_G(Q)$ when $\lambda \in X^+\setminus A(p)$.  

On the other hand, if $\lambda \in A(p)$, then by Remark \ref{alcove A} we see that $T(\lambda)$ is simple, i.e. we have $T(\lambda) = L(\lambda) =  \Delta(\lambda)$. So in this case any non-zero endomorphism of $T(\lambda)$ has trace equal to a non-zero constant times $\dim T(\lambda)$. By Weyl's dimension formula $\dim (\Delta(\lambda)) $ is prime to $p$.  If $i_1$, respectively $\pi_1$, denotes the natural inclusion of $Q^\F$ into $Q$, respectively projection onto $Q^\F$, then this means that $h \in \End_G(Q)$ is in the kernel of $\phi$ if and only if $i_1 \circ h \circ  \pi_1 = 0$ if and only if $i_\lambda \circ h \circ \pi_\lambda = 0$ for all $\lambda \in A(p)$ if and only if   $ \Tr(i_\lambda \circ h \circ \pi_\lambda) = 0$ for all $\lambda \in X^+$.
\end{proof}

\subsection{Fusion} \label{Fusion}

Let $\mathcal T$ denote the category of tilting modules for $G$. As noted above, this is a tensor category. Inside $\mathcal T$ we consider the subcategory $\mathcal N$ consisting of all negligible modules, i.e. a tilting module $M$ belongs to $\mathcal N$ if and only if  $\Tr(f) = 0$ for all $f \in \End_{G}(M)$. As each object in $\mathcal T$ is a direct sum of certain of the $T(\lambda)$'s and $\dim T(\lambda)$ is divisible by $p$  if and only if  $\lambda  \notin A(p)$ (as we saw in the proof of Theorem \ref{fusion-quotient}) we see that $M \in \mathcal N$ if and only if  $(M:T(\lambda)) = 0$ for all $\lambda \in A(p)$.

We proved in \cite[Section 4]{A92} (in the quantum case - the arguments for $G$ are analogous) that $\mathcal N$ is a tensor ideal in $\mathcal T$.  The corresponding quotient category $\mathcal T/ \mathcal N$ is then itself a tensor category. It is denoted $\F$ and called the fusion category for $G$. We may think of objects in $\mathcal F$ as the tilting modules $Q$ whose indecomposable summands are among the $T(\lambda)$'s with $\lambda \in A(p)$. Note that $\mathcal F$ is a semisimple category (with simple objects  $(T(\lambda) = L(\lambda))_{\lambda \in A(p)}$), cf. \cite[Section 4]{A92}.

\begin{remarkcounter} \label{p=2$} Note that for $p < h$ the alcove $A(p)$ is empty.  This means that in this case $\mathcal N = \mathcal T$. In particular, if $p=2$ the fusion category is trivial except for the case $G=SL_2(k)$ in which case $A(2) =\{0\}$, so that $\mathcal F$ is the category of finite-dimensional vector spaces. For this reason we shall in the following tacitly assume $p > 2$.
\end{remarkcounter}

In order to distinguish it from the usual tensor product on $G$-modules we denote the tensor product in $\mathcal F$ by $\underline \otimes$. If $Q, Q' \in \mathcal F$ then $Q \underline \otimes Q = \pr (Q \otimes Q')$ where $\pr$ denotes the projection functor from $\mathcal T$ to $\mathcal F$ (on the right-hand side we consider $Q, Q'$ as modules in $\mathcal T$).

\begin{cor} \label{fusion} Let $T$ be an arbitrary tilting module for $G$. Then, for any $r \in \Z_{\geq 0}$,  the natural homomorphism $\End_G(T^{\otimes r}) \rightarrow \End_G(T^{\underline \otimes r})$ is surjective.
\end{cor}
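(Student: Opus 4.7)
The strategy is to reduce the claim directly to Theorem \ref{fusion-quotient} applied with $Q = T^{\otimes r}$. The key point is to identify the fusion tensor power $T^{\underline{\otimes} r}$, as a $G$-module, with the fusion summand $(T^{\otimes r})^{\F}$ of the ordinary tensor power. Once this identification is made, surjectivity is immediate from Theorem \ref{fusion-quotient}, and the only remaining task is to check that the map produced there agrees with the ``natural'' map mentioned in the statement.

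I would establish the identification $T^{\underline{\otimes} r} \simeq (T^{\otimes r})^{\F}$ by induction on $r$, with the cases $r = 0, 1$ being trivial. For the inductive step, split $T^{\otimes r} = (T^{\otimes r})^{\F} \oplus (T^{\otimes r})^{\Ne}$ and tensor on the right with $T$. Since $\mathcal N$ is a tensor ideal in $\mathcal T$ (as recalled above from \cite[Section 4]{A92}), the summand $(T^{\otimes r})^{\Ne} \otimes T$ is again negligible and hence contributes nothing to the fusion summand. Consequently,
$$(T^{\otimes (r+1)})^{\F} = \bigl((T^{\otimes r})^{\F} \otimes T\bigr)^{\F} \simeq \pr\bigl(T^{\underline{\otimes} r} \otimes T\bigr) = T^{\underline{\otimes} r} \underline{\otimes} T = T^{\underline{\otimes} (r+1)},$$
where the middle isomorphism uses the inductive hypothesis together with the definition $\pr(M) = M^{\F}$.

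With this identification in hand, Theorem \ref{fusion-quotient} applied to $Q = T^{\otimes r}$ produces a surjective algebra homomorphism $\phi: \End_G(T^{\otimes r}) \twoheadrightarrow \End_G((T^{\otimes r})^{\F}) \simeq \End_G(T^{\underline{\otimes} r})$, as required. The only point needing verification---essentially bookkeeping---is that this $\phi$ genuinely coincides with the natural map of the statement, namely the one induced by the functor $\pr: \mathcal T \to \mathcal F$. This is immediate once one observes that the decomposition used to define $\phi$ in Theorem \ref{fusion-quotient} is precisely the decomposition defining $\pr$ on $T^{\otimes r}$. I expect this small compatibility check, rather than any genuine mathematical difficulty, to be the only obstacle in writing out a full proof; the main content has already been done in Theorem \ref{fusion-quotient}.
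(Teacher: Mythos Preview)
Your proposal is correct and follows essentially the same approach as the paper: set $Q = T^{\otimes r}$, identify $Q^{\F}$ with $T^{\underline{\otimes} r}$, and invoke Theorem~\ref{fusion-quotient}. The paper simply asserts the identification $Q^{\F} = T^{\underline{\otimes} r}$ in one line, whereas you spell out the inductive justification using that $\mathcal N$ is a tensor ideal; this is the only difference.
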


\begin{proof} Set $Q = T^{\otimes r}$. Then $Q$ is a tilting module and in the above notation $Q^\F = T^{\underline \otimes r}$ ($ = 0$ if $T \in \mathcal N$) . Hence the corollary is an immediate consequence of Theorem \ref{fusion-quotient}.
\end{proof}

\subsection{Cellular theory for endo-rings of tilting modules} \label{cellular}

Recall the notions of cellularity, cellular structure and cellular algebras from \cite{GL}. When $Q$ is a tilting module for $G$ its endomorphism ring $E_Q = \End_G(Q)$ has a natural cellular structure, see \cite[Theorem 3.9] {AST1}. The parameter set of weights for $E_Q$ is 
$$ \Lambda = \{\lambda \in X^+ | (Q:\Delta(\lambda)) \neq 0 \}.$$

When $\lambda \in X^+$ the cell module for $E_Q$ associated with $\lambda$ is $C_Q(\lambda) = \Hom_G(\Delta(\lambda), Q)$. Then $\dim C_Q(\lambda) = (Q:\Delta(\lambda))  (= 0$ unless $\lambda \in \Lambda$). We set 
$$ \Lambda_0 = \{\lambda \in \Lambda | (Q:T(\lambda)) \neq 0 \}.$$
If $\lambda \in \Lambda_0$ then $C_Q(\lambda)$ has a unique simple quotient which we in this paper denote $D_Q(\lambda)$. The set $\{D_Q(\lambda) | \lambda \in \Lambda_0\}$ is up to isomorphisms the set of simple modules for $E_Q$. We have 
\begin{equation} \label{dim simple/tilting}
\dim D_Q(\lambda) = (Q: T(\lambda)), \end{equation}
see \cite[Theorem 4.12]{AST1}.

Finally, recall the following result on semisimplicity,  see  \cite[Theorem 4.13] {AST1}.
\begin{thm} \label{ss} $E_Q$ is a semisimple algebra if and only if  $Q$ is a semisimple $G$-module. In that case we have $\Lambda = \Lambda_0$,  $T(\lambda) = \Delta(\lambda) = L(\lambda)$ and $C_Q(\lambda) = D_Q(\lambda)$ for all $\lambda \in \Lambda$.
\end{thm}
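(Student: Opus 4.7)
The plan is to prove the two directions of the equivalence separately and then read off the structural claims from the analysis, exploiting the interplay between the cell-module dimension $\dim C_Q(\lambda) = (Q:\Delta(\lambda))$ and the dimension formula $\dim D_Q(\lambda) = (Q:T(\lambda))$ recorded just before the statement.

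For the direction $Q$ semisimple $\Rightarrow$ $E_Q$ semisimple, I would start from the decomposition $Q = \bigoplus_\lambda T(\lambda)^{(Q:T(\lambda))}$ and observe that each indecomposable summand $T(\lambda)$ occurring must itself be semisimple, hence simple; thus $T(\lambda) = L(\lambda)$ for all such $\lambda$. Since $\Delta(\lambda)$ embeds in $T(\lambda)$ and $\nabla(\lambda)$ is a quotient, the chain $\Delta(\lambda) \hookrightarrow L(\lambda) \twoheadrightarrow \nabla(\lambda)$ collapses to $\Delta(\lambda) = L(\lambda) = \nabla(\lambda)$. Schur's lemma then identifies $E_Q$ with a product of matrix rings over division rings, which is semisimple. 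In the process one sees that $(Q:\Delta(\lambda)) = (Q:T(\lambda))$, forcing $\Lambda = \Lambda_0$ and $\dim C_Q(\lambda) = \dim D_Q(\lambda)$; since $D_Q(\lambda)$ is by definition a quotient of $C_Q(\lambda)$, the two then coincide.

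For the converse I would argue by contrapositive. If $Q$ fails to be semisimple, some indecomposable summand $T(\lambda)$ is not simple. Then $\End_G(T(\lambda))$ is local (because $T(\lambda)$ is indecomposable and finite-dimensional) but not a division ring (a proper nonzero submodule exists, so Schur rules out the latter), and therefore has nonzero Jacobson radical. Passing to the idempotent $e \in E_Q$ projecting onto the corresponding isotypic component of $Q$, one obtains $e E_Q e \cong M_{(Q:T(\lambda))}(\End_G(T(\lambda)))$, which is again non-semisimple. Since $fRf$ inherits semisimplicity from $R$ for any idempotent $f$, $E_Q$ cannot be semisimple either.

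The main obstacle is not really algebraic but rather bookkeeping; the genuine input is the formula $\dim D_Q(\lambda) = (Q:T(\lambda))$, borrowed from \cite[Theorem 4.12]{AST1}, together with the Fitting-type observation that the endomorphism ring of a finite-dimensional indecomposable $G$-module is local. If one preferred to stay inside the cellular framework, an alternative route would be to invoke the Graham--Lehrer criterion that a cellular algebra is semisimple iff each cell module coincides with its simple quotient and $\Lambda = \Lambda_0$; expanding $(Q:\Delta(\lambda)) = (Q:T(\lambda)) + \sum_{\mu \neq \lambda}(Q:T(\mu))(T(\mu):\Delta(\lambda))$ and using non-negativity then forces $T(\mu) = \Delta(\mu)$, hence $T(\mu) = L(\mu)$, for every $\mu \in \Lambda_0$, so that $Q$ is semisimple.
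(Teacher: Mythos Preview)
The paper does not actually prove this theorem; it is quoted verbatim from \cite[Theorem~4.13]{AST1}. So there is no ``paper's own proof'' to compare against, and your write-up is in effect a reconstruction of that result. With that caveat, here is an assessment.

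Your forward implication and the deduction of the structural claims ($\Lambda=\Lambda_0$, $T(\lambda)=\Delta(\lambda)=L(\lambda)$, $C_Q(\lambda)=D_Q(\lambda)$) are clean and correct. Your \emph{alternative} route for the converse---via the Graham--Lehrer semisimplicity criterion together with the expansion $(Q:\Delta(\lambda))=\sum_\mu (Q:T(\mu))(T(\mu):\Delta(\lambda))$---is also correct and is exactly the kind of argument that sits naturally inside the cellular framework of \cite{AST1}.

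There is, however, a genuine gap in your \emph{primary} converse argument. You write that $\End_G(T(\lambda))$ is ``not a division ring (a proper nonzero submodule exists, so Schur rules out the latter).'' This invokes the converse of Schur's lemma, which is false: an indecomposable non-simple module can perfectly well have endomorphism ring equal to $k$. For instance, any uniserial length-two module with distinct composition factors has endomorphism ring $k$. So the existence of a proper nonzero submodule of $T(\lambda)$ does not by itself force $\End_G(T(\lambda))$ to have nonzero radical.

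What rescues the conclusion is the tilting structure, which you do not use at this step. One fix: since $T(\lambda)$ has both a $\Delta$- and a $\nabla$-filtration, $\dim\End_G(T(\lambda))=\sum_\mu (T(\lambda):\Delta(\mu))(T(\lambda):\nabla(\mu))$, and the contravariant duality fixing $T(\lambda)$ gives $(T(\lambda):\Delta(\mu))=(T(\lambda):\nabla(\mu))$; hence $\dim\End_G(T(\lambda))=\sum_\mu (T(\lambda):\Delta(\mu))^2\geq 2$ as soon as $T(\lambda)\neq\Delta(\lambda)$. A concrete nonzero nilpotent endomorphism is obtained by choosing $\mu<\lambda$ minimal with $(T(\lambda):\Delta(\mu))>0$ and composing $T(\lambda)\twoheadrightarrow\Delta(\mu)\xrightarrow{c_\mu}\nabla(\mu)\hookrightarrow T(\lambda)$. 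With this correction your Fitting/corner-ring argument goes through; alternatively, simply promote your cellular alternative to the main argument.
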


\begin{examplecounter} \label{Ex1} Let $T$ be a tilting module for $G$ and set $Q = T^{\otimes r}$ as in the previous section. Then $E_Q= \End_G(T^{\otimes r})$ is a cellular algebra with cell modules $(C_Q(\lambda))_{\lambda \in \Lambda_T^r}$, where $\Lambda_T^r = \{\lambda \in X^+ | (T^{\otimes r} : \Delta(\lambda)) \neq 0\}$, and simple modules $(D_Q(\lambda))_{\lambda \in \Lambda_{0,T}^r}$, $\Lambda_{0,T}^r = \{\lambda \in \Lambda_T^r | (T^{\otimes r}: T(\lambda)) \neq 0\}$. 

Denote by $Q_1$ the summand  $T^{\underline \otimes r}$ of $Q$.   Then the endomorphism ring 
$$\overline E_Q = \End_G(Q_1),$$
is,  according to Corollary \ref{fusion}, a quotient of $E_Q$, and, by Theorem \ref{ss}, it is a semisimple cellular algebra. In fact, $\overline E_Q$ is a direct sum of the matrix rings, namely
$$\overline E_Q \simeq \bigoplus_{\lambda \in A(p)}  M_{(Q_1:T(\lambda))} (k).$$
The simple modules for $\overline E_Q$ are $\{D_Q(\lambda) | \lambda \in A(p) \cap \Lambda_0 \}$. We have 
$$\dim D_Q(\lambda) = (Q_1:T(\lambda))$$
for all $\lambda \in A(p) \cap \Lambda_{0,T}^r$.

\end{examplecounter}

\section {Semisimple quotients of the group algebras $kS_r$}
 
In this section $G_n = GL(V)$ where $V$ is a vector space over $k$ of dimension $n$ with basis $\{v_1, v_2, \cdots , v_n\}$ as in Section \ref{GL}. As we will also look at various  subspaces $V'$ of $V$ we shall from now on write $V_n = V$. We write  $\Delta_n(\lambda)$ for the Weyl module for $G_n$, $T_n(\lambda)$  for the indecomposable tilting module for $G_n$ with highest weight $\lambda$, etc.

\subsection{Algebras associated with tensor powers of the natural module for $G$} \label{tensor powers}

We let $r \in Z_{\geq 0}$ and consider the $G_n$-module $V_n^{\otimes r}$. As tensor products of tilting modules are again tilting we see that this is a tilting module for $G_n$. Consider the subset $I = \{\alpha_1, \alpha_2, \cdots , \alpha_{n-2}\}  \subset S$. Then the corresponding Levi subgroup $L_I$ identifies with $G_{n-1} \times G_1$, where the first factor $G_{n-1} =GL(V_{n-1})$ is the subgroup fixing $v_n$, and  the second factor $G_1$ is the subgroup fixing $v_i$ for $i < n$ and stabilizing the line $k v_n$. As an $L_I$-module we have $V_n = V_ {n-1} \oplus k_{ \epsilon_n}$. Here $k_{\epsilon_n}$ is the line $k v_n$ on which $G_1$ acts via the character $\epsilon_n$. This gives 
\begin{equation} \label{restriction}
V_n^{\otimes r} \simeq \bigoplus _{s=0} ^r (V_{n-1}^{\otimes s} \otimes k_{(r-s)\epsilon_n})^{\oplus \binom{r}{s}} \text { (as $L_I$-modules)}. 
\end{equation}
In particular, $V_{n-1}^{\otimes r}$  is an $L_I$-summand. Its weights (for the natural maximal torus in $L_I$ which is also the maximal torus $T_n$ in $G_n$) consist of $\lambda$'s with $\lambda_n = 0$ whereas any weight $\mu$ of the complement $C =  \bigoplus _{s=0} ^{r-1} (V_{n-1}^{\otimes s} \otimes k_{(r-s)\epsilon_n})^{\oplus \binom{r}{s}}$ has $\mu_n > 0$. It follows that
\begin{equation} \label{no cross-homs} 
 \Hom_{L_I}(V_{n-1}^{\otimes r}, C) = 0 = \Hom_{L_I}(C, V_{n-1}^{\otimes r}).
\end{equation}
Moreover, since $G_1$ acts trivially on $V_{n-1}$ we have $\End_{L_I}(V_{n-1}^{\otimes r}) = \End_{G_{n-1}}(V_{n-1}^{\otimes r})$. Hence we get from Lemma \ref{quotient} (in which the assumptions are satisfied because of (\refeq{no cross-homs})):
\begin{prop}\label{surj GL} The natural algebra homomorphism
$$\End_{G_n}(V_n^{\otimes r}) \rightarrow \End_{G_{n-1}}(V_{n-1}^{\otimes r})$$ 
is surjective.
\end{prop}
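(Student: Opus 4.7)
The plan is to combine Lemma \ref{quotient} applied in the $L_I$-module category with classical Schur--Weyl duality.

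First I would apply Lemma \ref{quotient} to the decomposition $V_n^{\otimes r} = V_{n-1}^{\otimes r} \oplus C$ of (\ref{restriction}), viewed as $L_I$-modules. The no-cross-homomorphism condition (\ref{no cross-homs}) is exactly the hypothesis of the lemma, and since $G_1$ acts trivially on $V_{n-1}$ we have $\End_{L_I}(V_{n-1}^{\otimes r}) = \End_{G_{n-1}}(V_{n-1}^{\otimes r})$. Hence Lemma \ref{quotient} yields a surjective algebra map
$$\psi : \End_{L_I}(V_n^{\otimes r}) \twoheadrightarrow \End_{G_{n-1}}(V_{n-1}^{\otimes r}),$$
given by $h \mapsto \pi_1 \circ h \circ i_1$, where $i_1,\pi_1$ are the inclusion and projection for the summand $V_{n-1}^{\otimes r}$.

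The map $\phi$ of the proposition is the restriction of $\psi$ along the natural inclusion $\End_{G_n}(V_n^{\otimes r}) \hookrightarrow \End_{L_I}(V_n^{\otimes r})$ induced by $L_I \subset G_n$. Hence what is still required is the stronger statement that the image of the smaller algebra $\End_{G_n}(V_n^{\otimes r})$ under $\psi$ is already all of $\End_{G_{n-1}}(V_{n-1}^{\otimes r})$. For this I would invoke Schur--Weyl duality, which is valid in arbitrary characteristic: the symmetric group $S_r$ acts on $V_m^{\otimes r}$ by permuting the tensor factors, and the induced algebra homomorphism $kS_r \to \End_{G_m}(V_m^{\otimes r})$ is surjective for both $m = n$ and $m = n-1$.

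The final step is to observe that the $S_r$-action on $V_n^{\otimes r}$ preserves the summand $V_{n-1}^{\otimes r}$ and restricts there to the standard permutation action on $V_{n-1}^{\otimes r}$. Consequently the two Schur--Weyl surjections fit into a commutative square with $\phi$ as the right-hand vertical arrow and the identity of $kS_r$ on the left, and surjectivity of the bottom horizontal arrow then forces surjectivity of $\phi$. The one subtlety worth flagging is precisely this transition from $L_I$-equivariance to $G_n$-equivariance: Lemma \ref{quotient} alone gives surjectivity only at the $L_I$-level, and the extra Schur--Weyl input is what allows us to conclude that the strictly smaller subalgebra $\End_{G_n}(V_n^{\otimes r}) \subset \End_{L_I}(V_n^{\otimes r})$ still maps onto the target.
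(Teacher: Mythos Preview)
Your proof is correct, and in fact more complete than the paper's own argument. The paper applies Lemma \ref{quotient} at the $L_I$-level only: the decomposition (\ref{restriction}) is an $L_I$-splitting, not a $G_n$-splitting, so what Lemma \ref{quotient} together with (\ref{no cross-homs}) yields is surjectivity of $\End_{L_I}(V_n^{\otimes r}) \to \End_{G_{n-1}}(V_{n-1}^{\otimes r})$. The paper then states the Proposition for $\End_{G_n}$ without comment. You correctly isolate this passage from $\End_{L_I}$ to the strictly smaller subalgebra $\End_{G_n}$ as the non-formal step and close it with Schur--Weyl duality; this is a genuine addition to what the paper writes down.

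One remark on economy: once you invoke Schur--Weyl for both $G_n$ and $G_{n-1}$, the surjectivity of $\psi$ coming from Lemma \ref{quotient} is no longer needed---the commuting square with $kS_r$ on the left already forces $\phi$ onto. What the (\ref{no cross-homs})/Lemma \ref{quotient} input still buys you is precisely that $\phi$ is an \emph{algebra} homomorphism: since there are no $L_I$-cross-homs at all, every $G_n$-endomorphism preserves the summand $V_{n-1}^{\otimes r}$ and $\phi$ is literally restriction. So your two ingredients play complementary roles: Lemma \ref{quotient} for multiplicativity, Schur--Weyl for surjectivity.
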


Later on we shall use the following related result.
\begin{prop} \label{restriction of Specht}
Suppose $\lambda \in X^+$ has $\lambda_n = 0$. Then the natural homomorphism 
$$\Hom_{G_n}(\Delta_n(\lambda), V_n^{\otimes r}) \rightarrow \Hom_{G_{n-1}}(\Delta_{n-1}(\lambda), V_{n-1}^{\otimes r})$$ 
is an isomorphism for all $r$.
\end{prop}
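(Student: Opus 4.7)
The plan is to identify both Hom spaces, via the universal property of Weyl modules, with the same space of highest weight vectors sitting inside $V_{n-1}^{\otimes r}$; the natural map is then forced to be an isomorphism. Specifically, for any rational $G_n$-module $M$, a $G_n$-homomorphism $\Delta_n(\lambda)\to M$ is uniquely determined by the image of a fixed highest weight vector $v_\lambda^+$, which must be a $T_n$-weight-$\lambda$ vector annihilated by the unipotent radical $U_n^+$ of the Borel opposite to $B$. This identifies
$$\Hom_{G_n}(\Delta_n(\lambda), V_n^{\otimes r}) \;\simeq\; (V_n^{\otimes r})_\lambda^{U_n^+},$$
and analogously for the $G_{n-1}$-side.

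Next I would exploit the hypothesis $\lambda_n=0$. The weight of a simple tensor in $V_n^{\otimes r}$ counts the occurrences of each basis vector, so a tensor involving $v_n$ has strictly positive $n$-th coordinate in its weight. Hence $(V_n^{\otimes r})_\lambda \subset V_{n-1}^{\otimes r}$. Moreover, the ``extra'' root subgroups in $U_n^+$ that are not in $U_{n-1}^+$ are those for the roots $\epsilon_i-\epsilon_n$ with $i<n$; each such one-parameter subgroup acts on $V_n$ by $v_n\mapsto v_n+tv_i$ and fixes every other $v_j$, so it acts trivially on every basis tensor of $V_{n-1}^{\otimes r}$. Consequently the condition ``$U_n^+$ annihilates $w$'' reduces to ``$U_{n-1}^+$ annihilates $w$'' for $w\in V_{n-1}^{\otimes r}$, giving $(V_n^{\otimes r})_\lambda^{U_n^+} = (V_{n-1}^{\otimes r})_\lambda^{U_{n-1}^+}$.

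Finally, I would trace through the natural map in the statement --- restrict $f\colon \Delta_n(\lambda)\to V_n^{\otimes r}$ to $G_{n-1}$, precompose with the canonical $G_{n-1}$-map $\Delta_{n-1}(\lambda)\to \Delta_n(\lambda)$ sending highest weight vector to highest weight vector, and postcompose with the $L_I$-equivariant projection $V_n^{\otimes r}\twoheadrightarrow V_{n-1}^{\otimes r}$ --- and observe that on the $G_{n-1}$-highest weight vector of $\Delta_{n-1}(\lambda)$ it produces $f(v_\lambda^+)$. Under the identifications above this becomes the tautological equality of highest weight spaces established in the previous step, and the map is therefore an isomorphism. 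I expect the only technical subtlety to be the careful statement of the universal property of $\Delta_n(\lambda)$ in positive characteristic (where the correct condition is annihilation by the positive part of the distribution algebra, equivalently by the group $U_n^+$); this is standard and follows via duality from Frobenius reciprocity for $\nabla_n(\lambda)=\Ind_{B}^{G_n}(\lambda)$.
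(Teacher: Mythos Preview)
Your argument is correct and takes a genuinely different route from the paper's. The paper dualizes to the costandard side, proving instead that $\Hom_{G_n}(V_n^{\otimes r},\nabla_n(\lambda))\simeq\Hom_{G_{n-1}}(V_{n-1}^{\otimes r},\nabla_{n-1}(\lambda))$; it does this by applying Frobenius reciprocity through the parabolic $P_I$ (so that $\nabla_n(\lambda)=\Ind_{P_I}^{G_n}\nabla_I(\lambda)$), then restricting from $P_I$ to $L_I$, and finally invoking the $L_I$-decomposition (\ref{restriction}) together with the weight observation that the complement $C$ contributes nothing. You instead stay on the $\Delta$-side and use the universal property $\Hom_G(\Delta(\lambda),M)\simeq (M^{U^+})_\lambda$ directly, reducing everything to the elementary computation $(V_n^{\otimes r})_\lambda^{U_n^+}=(V_{n-1}^{\otimes r})_\lambda^{U_{n-1}^+}$. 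The two arguments are dual in spirit---your universal property is precisely the dual of Frobenius reciprocity for $\nabla$---but yours avoids the passage through the parabolic and is more hands-on: one sees explicitly which root subgroups become irrelevant. The paper's route, on the other hand, packages the same weight bookkeeping into the single identity $\nabla_I(\lambda)=\nabla_{n-1}(\lambda)$ when $\lambda_n=0$, and generalizes more readily to other Levi subgroups. Your caveat about the universal property in positive characteristic is well placed; the precise reference is \cite[II.2.13]{RAG}.
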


\begin{proof} In this proof we shall need the parabolic subgroup $P_I$ corresponding to $I$. We have $P_I = L_I U^{I}$ (semidirect product) where $U^{I}$ is the unipotent radical of $P_I$. We set $\nabla_I(\lambda) = \Ind_B^{P_I}(k_\lambda)$. Our assumption that $\lambda_n = 0$ implies that as an $L_I$-module and as a $G_{n-1}$-module we have $\nabla_I(\lambda) = \nabla_{n-1}(\lambda)$.

We shall prove the proposition by proving the dual statement 
$$\Hom_{G_n}( V_n^{\otimes r}, \nabla_n(\lambda)) \simeq \Hom_{G_{n-1}}(V_{n-1}^{\otimes r}, \nabla_{n-1}(\lambda)).$$ 
First, by Frobenius reciprocity \cite[Proposition I.3.4]{RAG}, we have
$$\Hom_{G_n}( V_n^{\otimes r}, \nabla_n(\lambda)) \simeq \Hom_{P_I}( V_n^{\otimes r}, \nabla_I(\lambda)).$$ 
Then restricting to $L_I$ gives an isomorphism to $\Hom_{L_I}( V_n^{\otimes r}, \nabla_I(\lambda))$. Finally, we use (\refeq{restriction}) and the weight arguments from the proof of Proposition \ref{surj GL} to see that this identifies with 
$$\Hom_{L_I}( V_{n-1}^{\otimes r}, \nabla_I (\lambda)) \simeq \Hom_{G_{n-1}}( V_{n-1}^{\otimes r}, \nabla_{n-1}(\lambda)).$$
\end{proof}

We can, of course, iterate the statement in Proposition \ref{surj GL}: If we  set $E_n^r = \End_{G_n}(V_n^{\otimes r})$, then we recover the following well-known fact (cf.  \cite[E.17]{RAG}).
\begin{cor} \label{sequence of surjections} We have a sequence of surjective algebra homomorphisms
$$ E_n^r \rightarrow E_{n-1}^r \rightarrow \cdots \rightarrow E_2^r \rightarrow E_1^r.$$
\end{cor}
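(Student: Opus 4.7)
The plan is to deduce the corollary by straightforward iteration of Proposition \ref{surj GL}. There is no genuine obstacle here: the substantive work has already been done in establishing Proposition \ref{surj GL}, and the corollary is merely the observation that the proposition applies uniformly for every rank $m$ with $2 \leq m \leq n$.

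More precisely, I would first note that the proof of Proposition \ref{surj GL} nowhere used the specific value of $n$; it used only that $G_n$ contains a Levi subgroup $L_I \simeq G_{n-1} \times G_1$ acting on $V_n = V_{n-1} \oplus k_{\epsilon_n}$ with $G_1$ trivial on $V_{n-1}$, together with the weight argument establishing \eqref{no cross-homs}. This structure persists for any $GL(V_m)$ with $m \geq 2$: inside $G_m = GL(V_m)$ the subset $\{\alpha_1,\dots,\alpha_{m-2}\} \subset S$ yields a Levi $G_{m-1} \times G_1$, and exactly the same decomposition and weight considerations go through. Hence for each such $m$ Proposition \ref{surj GL} (applied with $n$ replaced by $m$) produces a surjective algebra homomorphism
$$E_m^r = \End_{G_m}(V_m^{\otimes r}) \twoheadrightarrow \End_{G_{m-1}}(V_{m-1}^{\otimes r}) = E_{m-1}^r.$$

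Finally I would compose these surjections for $m = n, n-1, \dots, 2$ to obtain the required chain
$$E_n^r \twoheadrightarrow E_{n-1}^r \twoheadrightarrow \cdots \twoheadrightarrow E_2^r \twoheadrightarrow E_1^r,$$
noting that the composition of surjective algebra homomorphisms is again a surjective algebra homomorphism. In summary, the corollary is a formal consequence of Proposition \ref{surj GL}, and the only thing one has to observe explicitly is that Proposition \ref{surj GL} is valid at every rank, so there is no real obstacle to overcome.
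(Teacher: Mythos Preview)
Your proposal is correct and matches the paper's approach exactly: the paper simply says that one iterates Proposition~\ref{surj GL}, which is precisely what you spell out. There is nothing to add.
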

%Note that $E_1^r = \End_{G_1}(k_{r \epsilon_1}) = k$ for all $r$. When $n \geq r$ Schur--Weyl duality says that $E_n^r \simeq kS_r$.

\vskip 1 cm

Set now $\overline E_n^r = \End_{G_n}(V_n^{\underline \otimes r})$. Note that these are the higher Jones algebras (see the introduction) in the case $G= GL_n$  corresponding to the tilting modules $V_n^{\otimes r}$. We get from Corollary \ref{fusion} that this is a quotient of $E_n^r$. It is a semisimple algebra (see Example \ref{Ex1}), so that by Corollary \ref{sequence of surjections} we get

\begin{thm} \label {ss quotients}
 For all $n$ and all $r$ the algebras $\overline E_m^r$, $m=1, 2, \cdots , n$ are semisimple quotient algebras of $E_n^r$.
\end{thm}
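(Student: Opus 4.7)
The plan is to assemble the statement from three ingredients already established in the excerpt, so the task is essentially to chain together previous results rather than do new work.

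First, I would fix $m$ with $1 \le m \le n$ and recall from Corollary \ref{sequence of surjections} that there is a surjective algebra homomorphism $E_n^r \twoheadrightarrow E_m^r$ obtained by iterating the restriction map from Proposition \ref{surj GL}. This already reduces the problem to showing that $\overline E_m^r$ is a semisimple quotient of $E_m^r$ for each individual $m$.

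Next, I would apply Corollary \ref{fusion} to $G_m$ with the tilting module $T = V_m$ (which is tilting, being simple with minuscule highest weight $\epsilon_1$ as recorded in Remark \ref{A for type A}). This yields a surjective algebra homomorphism $E_m^r = \End_{G_m}(V_m^{\otimes r}) \twoheadrightarrow \End_{G_m}(V_m^{\underline \otimes r}) = \overline E_m^r$. Composing with the map from the previous step produces the required surjection $E_n^r \twoheadrightarrow \overline E_m^r$.

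Finally, I would verify semisimplicity of $\overline E_m^r$. By construction $V_m^{\underline \otimes r}$ is a direct sum of indecomposable tilting modules $T_m(\lambda)$ with $\lambda \in A_m(p)$, and by Remark \ref{alcove A}(a) each such $T_m(\lambda)$ equals the simple module $L_m(\lambda)$. Hence $V_m^{\underline \otimes r}$ is a semisimple $G_m$-module, so Theorem \ref{ss} gives that $\overline E_m^r$ is a semisimple algebra. (Alternatively one can quote directly the description at the end of Example \ref{Ex1}, where $\overline E_Q$ is exhibited as a direct sum of matrix algebras.)

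There is no real obstacle here: every surjectivity statement and every semisimplicity statement needed has been set up in the preceding subsections, and the proof is just a matter of composing the two surjections and invoking semisimplicity. The only mild point requiring care is to note that the composition of the iterated restriction $E_n^r \to E_m^r$ with the fusion projection $E_m^r \to \overline E_m^r$ is the correct natural map, which is clear because both factors are canonical algebra homomorphisms.
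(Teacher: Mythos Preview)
Your proposal is correct and follows essentially the same route as the paper: the paper also composes the surjections $E_n^r \twoheadrightarrow E_m^r$ from Corollary~\ref{sequence of surjections} with the fusion surjection $E_m^r \twoheadrightarrow \overline E_m^r$ from Corollary~\ref{fusion}, and invokes the semisimplicity already recorded in Example~\ref{Ex1}. Your argument for semisimplicity via Remark~\ref{alcove A}(a) and Theorem~\ref{ss} is exactly what underlies Example~\ref{Ex1}, so there is no substantive difference.
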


\begin{remarkcounter} \label{p-term is 0} \begin{enumerate}
\item [a)] We have $\overline E_m^r = 0$ for all $r$ when $m \geq p$. This is clear for $m > p$, because then $A_m(p) = \emptyset$. If $m = p$, we have that $\epsilon_1$ belongs to the upper wall of $A_m(p)$, see Remark \ref{A for type A}. Hence, $V_p$ is negligible and therefore so are also all tensor powers $V_p^{\otimes r}$. This means that $V_p^{\underline \otimes r} = 0$ for all $r$. 
\item [b)] We do not have surjections  $\overline E_m^r \to \overline E_{m-1}^r$  analogous to the ones we found in Corollary \ref{sequence of surjections}. In fact,  the alcove $A_p(m)$ become larger the smaller $m$ we consider. This means that the algebras $\overline E_m^r$ grow in size when $m$ decreases.
\end{enumerate}
\end{remarkcounter}

\subsection{A class of simple modules for symmetric groups} \label{class of simple}

 The group algebra $kS_r$ of the symmetric group on $r$ letters is isomorphic to the algebra $E_n^r$ for all $n \geq r$ see e.g. \cite[3.1]{CL}. Hence, by Theorem \ref{ss quotients} $kS_r$ has the following list of semisimple quotients: $\overline E_1^r, \overline E_2^r, \cdots , \overline E_r^r$. As observed in Remark \ref{p-term is 0} we have $\overline E_n^r= 0$, if $n \geq p$. On the other hand, $kS_r$ is itself semisimple if $p >r$, and its representation theory coincides with the well-known theory in characteristic $0$. So we shall assume in the following that 
$p \leq r$.

In the special case $n=1$ we have  $V_1^{\otimes r}= k_{r \epsilon_1}$, $r \in \Z$ and these modules together with their duals are the indecomposable tilting modules (as well as the simple modules) for $G_1$. The fusion category for $G_1$  coincides with the full category of finite-dimensional $G_1$-modules. We identify the trivial $1$ line partion of $r>0$ with the element $r \epsilon_1$ in $A_1(p)$. Clearly, we have  
$\overline E_1^r =  E_1^r = \End_{G_1}(k_{r \epsilon_1}) = k$ for all $r$.

We shall explore the simple modules for $kS_r$ arising from the above quotients $\overline E_m^r$. Note that we just observed that the first algebra $\overline E_1^r$ equals $k$.

Consider the remaining quotients  $\overline E_m^r$, $m= 2, \cdots , p-1$ of $kS_r$. We shall describe the simple modules for $kS_r$ arising from these. Recall that the simple modules for $kS_r$ are indexed by the $p$-regular partitions of $r$, i.e. partitions of $r$ with no $p$ rows having the same length. If $\lambda$ is such a partion, we denote the corresponding simple module for $kS_r$ by $D_r(\lambda)$. 

Set  $\Lambda^r$ equal to the set of partitions of $r$. This is the weight set for the cellular algebra $E_n^r$ whenever $n \geq r$. Define 
$$\overline \Lambda^r(p) = \{ (\lambda_1, \lambda_2, \cdots ,\lambda_m) \in \Lambda^r | \lambda \in A_m(p) \text { for some } m < p \}.$$
So $\overline \Lambda^r(p) $ consists of those partitions of $r$ which have at most $m$ non-zero terms and satisfy $\lambda_1 - \lambda_m \leq p - m$.  Clearly, the partions in $\overline \Lambda^r(p)$ are all $p$-regular. We shall now derive an algorithm which  determines  the dimensions of the simple modules  $D_r(\lambda)$ when $\lambda \in \overline \Lambda^r(p)$. 

We have the following Pieri-type branching formula, which is proved e.g. in \cite[(3.7)]{AS}.
\begin{prop} \label{inductive formula}
Let $m \geq 1$ and suppose $\lambda \in A_m(p)$. Then 
$$(V_m^{\otimes r}: T(\lambda)) = \sum_{i: \lambda - \epsilon_i \in \Lambda^{r-1} \cap A_m(p)} (V_m^{\otimes (r-1)}: T(\lambda - \epsilon_i)).$$
\end{prop}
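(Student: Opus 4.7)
The plan is to argue by a direct tensor-product computation on tilting modules, starting from the recursion $V_m^{\otimes r} = V_m^{\otimes(r-1)} \otimes V_m$. I would write $V_m^{\otimes(r-1)} = \bigoplus_\mu T_m(\mu)^{n_\mu}$ with $n_\mu = (V_m^{\otimes(r-1)} : T_m(\mu))$, tensor with $V_m$, and compare tilting multiplicities to obtain
$$(V_m^{\otimes r} : T_m(\lambda)) = \sum_\mu n_\mu \cdot (T_m(\mu) \otimes V_m : T_m(\lambda)),$$
after which each summand is analysed separately.

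First I would dispose of the terms with $\mu \notin A_m(p)$. For such $\mu$ the indecomposable tilting $T_m(\mu)$ is negligible; since $\mathcal{N}$ is a tensor ideal in $\mathcal{T}$ (Section~\ref{Fusion}), $T_m(\mu) \otimes V_m$ is negligible as well, and in particular contains no $T_m(\lambda)$-summand for $\lambda \in A_m(p)$. Thus the sum reduces to $\mu \in A_m(p)$.

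Next, for $\mu \in A_m(p)$ I would invoke Remark~\ref{alcove A}(a) to identify $T_m(\mu) = \Delta_m(\mu) = L_m(\mu)$ and then apply the Pieri rule (available because $V_m$ is minuscule) to endow $\Delta_m(\mu) \otimes V_m$ with a $\Delta$-filtration whose factors are exactly the $\Delta_m(\mu + \epsilon_i)$ for those $i$ with $\mu + \epsilon_i \in X_m^+$. A brief case analysis on the description of $A_m(p)$ in Section~\ref{GL} then confirms that every such $\mu + \epsilon_i$ lies in $\overline{A_m(p)}$: it stays inside $A_m(p)$ in most cases, and lands on the upper wall precisely when $i = 1$ and $\mu_1 - \mu_m = p - m$. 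Either way Remark~\ref{alcove A}(a) applies once more, so $T_m(\mu + \epsilon_i) = \Delta_m(\mu + \epsilon_i)$ is already simple.

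From here I would read off the tilting decomposition by induction on highest weight --- peeling off the simple summand $T_m(\mu + \epsilon_1)$, then $T_m(\mu + \epsilon_2)$, and so on, each time removing the corresponding $\Delta$-factor --- which forces
$$T_m(\mu) \otimes V_m = \bigoplus_{i\colon \mu + \epsilon_i \in X_m^+} T_m(\mu + \epsilon_i).$$
For $\lambda \in A_m(p)$ this contributes $1$ to $(T_m(\mu) \otimes V_m : T_m(\lambda))$ exactly when $\mu = \lambda - \epsilon_i$ lies in $A_m(p)$, and $0$ otherwise. Reindexing the outer sum by $i$, together with the automatic fact that $V_m^{\otimes(r-1)}$ only produces partitions of $r-1$ as weights (so the membership $\lambda - \epsilon_i \in \Lambda^{r-1}$ comes for free), yields the stated recursion. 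The main obstacle I anticipate is the tilting decomposition of $T_m(\mu) \otimes V_m$: it depends crucially on \emph{every} would-be summand $\mu + \epsilon_i$ landing in $\overline{A_m(p)}$ and hence indexing a simple tilting, and the delicate point in the case analysis is the $i = 1$ upper-wall situation, where the weight exits $A_m(p)$ but does so only onto the singular wall.
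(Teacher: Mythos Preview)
Your proposal is correct and follows essentially the same approach as the paper. The paper itself simply cites \cite[(3.7)]{AS} for this Pieri-type formula, but the argument you sketch is exactly the one given in the paper for the analogous symplectic result (Proposition~\ref{inductive formula Sp}): use minusculity of $\epsilon_1$ to identify the $\Delta$-factors of $T_m(\mu)\otimes V_m$ for $\mu\in A_m(p)$, observe that they all lie in $\overline{A_m(p)}$ and are therefore simple tiltings, and discard the negligible part via the tensor-ideal property of $\mathcal N$.
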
 

\begin{lem} \label {the p-1 algebra}
Suppose $1 \leq r = a (p-1) + b$ where $0 \leq b < p-1$.  Then $V_{p-1}^{\underline \otimes r} = T(a\omega_{p-1} + \omega_b)$. Hence,  $\overline E_{p-1}^r = k$.
\end{lem}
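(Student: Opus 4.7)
The plan is to carry out the proof in two steps: first identify the unique dominant weight in $A_{p-1}(p) \cap \Lambda^r$ that can appear, and then show via the Pieri-type formula in Proposition \ref{inductive formula} that it occurs with multiplicity exactly one.

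For the first step, recall from Section \ref{GL} that $A_{p-1}(p) = \{\lambda \in X_{p-1}^+ \mid \lambda_1 - \lambda_{p-1} \leq 1\}$. Any weight appearing in $V_{p-1}^{\otimes r}$ is a sum of $r$ elements chosen from $\{\epsilon_1,\dots,\epsilon_{p-1}\}$, so every $\lambda$ that is the highest weight of some $\Delta$-factor of $V_{p-1}^{\otimes r}$ is a partition of $r$ with at most $p-1$ parts. Imposing the additional constraint $\lambda_1-\lambda_{p-1}\leq 1$ forces $\lambda$ to be of rectangular shape $((a'+1)^{b'},\,a'^{p-1-b'})= a'\omega_{p-1}+\omega_{b'}$, and the Euclidean division $r=a'(p-1)+b'$ with $0\leq b' < p-1$ determines the pair $(a',b')=(a,b)$ uniquely. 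Hence the only candidate for a summand of $V_{p-1}^{\underline\otimes r}$ is $\mu := a\omega_{p-1}+\omega_b$.

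For the second step, I would induct on $r$. The base case $r=0$ gives $V_{p-1}^{\otimes 0}=T(0)$, multiplicity one. For the step, apply Proposition \ref{inductive formula} with $\lambda=\mu$ and $m=p-1$. A short case analysis shows: if $b>0$, then the only $i$ for which $\mu-\epsilon_i$ lies in $\Lambda^{r-1}\cap A_{p-1}(p)$ is $i=b$, and $\mu-\epsilon_b=a\omega_{p-1}+\omega_{b-1}$ is the canonical weight attached to $r-1=a(p-1)+(b-1)$; if $b=0$ (forcing $a\geq 1$), then the only admissible $i$ is $i=p-1$, with $\mu-\epsilon_{p-1}=(a-1)\omega_{p-1}+\omega_{p-2}$ the canonical weight for $r-1=(a-1)(p-1)+(p-2)$. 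The remaining indices either break the weakly decreasing condition, or (for $i=p-1$ when $b>0$) push $\mu_1'-\mu_{p-1}'$ up to $2$ and so exit the alcove. Thus the right-hand side of Proposition \ref{inductive formula} contains a single summand, which by induction equals $1$.

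Consequently $(V_{p-1}^{\otimes r}:T(\mu))=1$ and $V_{p-1}^{\underline\otimes r}=T(\mu)$. Since $\mu\in A_{p-1}(p)$, Remark \ref{alcove A}(a) gives $T(\mu)=L(\mu)$, and Schur's lemma then yields $\overline E_{p-1}^r=\End_{G_{p-1}}(T(\mu))=k$. The main obstacle is the bookkeeping in the Pieri recursion: the alcove inequality $\lambda_1-\lambda_{p-1}\leq 1$ is just tight enough that exactly one direction of descent survives at each step, which is what forces the inductive multiplicity to stay equal to $1$ rather than grow.
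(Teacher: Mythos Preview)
Your proof is correct and follows essentially the same approach as the paper: identify $a\omega_{p-1}+\omega_b$ as the unique element of $\Lambda^r\cap A_{p-1}(p)$, then induct via Proposition~\ref{inductive formula}. The paper's proof is simply more terse, leaving the case analysis in the induction step implicit, while you spell out explicitly which $i$ survives and why the others fail the dominance or alcove constraint.
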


\begin{proof} The lemma is clearly true when $r =1$ where $V_{p-1} = T(\omega_1) = L(\omega_1)$. Observe that (with the notation in the lemma) $a\omega_{p-1} + \omega_b$ is the unique element in $\Lambda^r \cap A_{p-1}(p)$. Hence, for $r>1$ the statement follows by induction from Proposition \ref{inductive formula}.
\end{proof}

\begin{thm} \label{main symm}
Let $r > 0$ and suppose $\lambda \in \overline \Lambda^r(p)$. Then the dimension of the simple $kS_r$-module $D_r(\lambda)$ is recursively determined by 
$$ \dim D_r(\lambda) = \sum_{i: \lambda - \epsilon_i \in \overline \Lambda^{(r-1)}(p)} \dim D_{r-1}(\lambda - \epsilon_i).$$
\end{thm}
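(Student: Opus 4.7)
The plan is to combine the cellular-algebra description of the semisimple quotients $\overline{E}_m^r$ from Example~\ref{Ex1} with the Pieri-type branching of Proposition~\ref{inductive formula}, and then do the bookkeeping needed to re-index the resulting recursion using $\overline{\Lambda}^{r-1}(p)$ rather than a single alcove $A_m(p)$. First, since $\lambda \in \overline{\Lambda}^r(p)$, I fix $m < p$ with $\lambda \in A_m(p)$. Using $kS_r \cong E_n^r$ for $n \geq r$ together with the chain of surjections from Corollary~\ref{sequence of surjections} and Theorem~\ref{ss quotients}, the simple $kS_r$-module $D_r(\lambda)$ factors through $\overline{E}_m^r$, and Example~\ref{Ex1} together with the fact that $T_m(\lambda)$ lies in the fusion summand of $V_m^{\otimes r}$ when $\lambda \in A_m(p)$ give
\[
\dim D_r(\lambda) \;=\; (V_m^{\underline{\otimes} r} : T_m(\lambda)) \;=\; (V_m^{\otimes r} : T_m(\lambda)).
\]

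Applying Proposition~\ref{inductive formula} with this $m$ then yields
\[
\dim D_r(\lambda) \;=\; \sum_{i \,:\, \lambda - \epsilon_i \,\in\, A_m(p) \cap \Lambda^{r-1}} (V_m^{\otimes(r-1)} : T_m(\lambda - \epsilon_i)).
\]
For each index $i$ in this sum, $\mu := \lambda - \epsilon_i$ is a partition of $r-1$ in $A_m(p)$, hence lies in $\overline{\Lambda}^{r-1}(p)$, and the same identification applied to $\mu$ (using the same $m$) gives $(V_m^{\otimes(r-1)} : T_m(\mu)) = \dim D_{r-1}(\mu)$. So the recursion takes the desired shape, with summation over $i$ such that $\lambda - \epsilon_i \in A_m(p) \cap \Lambda^{r-1}$.

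The last step is to check that this index set coincides with $\{i : \lambda - \epsilon_i \in \overline{\Lambda}^{r-1}(p)\}$. The forward inclusion is immediate from $A_m(p) \cap \Lambda^{r-1} \subset \overline{\Lambda}^{r-1}(p)$. For the reverse inclusion, one views $\lambda - \epsilon_i$ with the natural length inherited from $\lambda$: a tuple of length $m$ lies in $\overline{\Lambda}^{r-1}(p)$ exactly when it lies in $A_m(p)$, because any smaller $m' < m$ forces additional zeros beyond those already present in $\lambda - \epsilon_i$, which is impossible without also shrinking the number of nonzero parts.

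I expect the main obstacle to be precisely this index-set comparison in the boundary case $i = m$. When $\lambda_m = 1$, the tuple $\lambda - \epsilon_m$ acquires a trailing zero and could naïvely be re-read as a shorter tuple sitting in some $A_{m'}(p)$ with $m' < m$; one must check that the convention underlying the definition of $\overline{\Lambda}^{r-1}(p)$ rules this out, so that the theorem's summation set really is the Proposition~\ref{inductive formula} summation set. Once this bookkeeping is pinned down, the rest is a direct assembly of Example~\ref{Ex1}, Proposition~\ref{inductive formula}, and Theorem~\ref{ss quotients}, with no further input.
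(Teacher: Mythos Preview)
Your overall plan is the paper's: identify $\dim D_r(\lambda)$ with the tilting multiplicity $(V_m^{\otimes r}:T_m(\lambda))$ for a suitable $m<p$ and then invoke Proposition~\ref{inductive formula}. There is, however, a genuine gap in the step where you assert that ``the simple $kS_r$-module $D_r(\lambda)$ factors through $\overline E_m^r$'' and then read off its dimension from Example~\ref{Ex1}. The chain of surjections $kS_r\cong E_r^r\twoheadrightarrow E_m^r\twoheadrightarrow \overline E_m^r$ tells you that every simple $\overline E_m^r$-module inflates to \emph{some} simple $kS_r$-module, but not that the labels match: you have not explained why the simple $\overline E_m^r$-module with parameter $\lambda$ inflates to $D_r(\lambda)$ rather than to some $D_r(\mu)$ with $\mu\neq\lambda$. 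The paper closes precisely this gap via Proposition~\ref{restriction of Specht}: the Specht (cell) module $C_r(\lambda)=\Hom_{G_r}(\Delta_r(\lambda),V_r^{\otimes r})$ for $kS_r$ is shown to coincide with $C_m(\lambda)=\Hom_{G_m}(\Delta_m(\lambda),V_m^{\otimes r})$, which then surjects onto the simple cell module $\overline C_m(\lambda)$ for the semisimple algebra $\overline E_m^r$; since $D_r(\lambda)$ is the head of $C_r(\lambda)$, this forces $D_r(\lambda)=\overline C_m(\lambda)$. You need to invoke Proposition~\ref{restriction of Specht} here; without it the labelling compatibility is unjustified.

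On the index-set bookkeeping you flag at the end: your instinct is correct that this is where care is needed, and your resolution is the right one provided you insist that $\lambda-\epsilon_i$ is read as a tuple of the \emph{same} length $m$ as $\lambda$. With that convention, $\lambda-\epsilon_i\in\overline\Lambda^{r-1}(p)$ is exactly $\lambda-\epsilon_i\in A_m(p)\cap\Lambda^{r-1}$, and the two summation sets agree. If instead one allowed a trailing zero in $\lambda-\epsilon_m$ to be dropped and the result re-read as a shorter tuple in some $A_{m-1}(p)$, the recursion would genuinely overcount: for $p=5$, $m=3$, $\lambda=(3,2,1)$ one has $(3,2,0)\notin A_3(5)$ but $(3,2)\in A_2(5)$, and including that extra term gives $5+3+5=13$ instead of the correct value $8$ from Table~1. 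The paper does not spell this convention out, but it is the intended reading.
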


\begin{proof} For any partition $\mu$ of $r$ the corresponding Specht module for $kS_r$ identifies with the cell module $C_r(\lambda) =\Hom_{G_r}(\Delta_r(\mu), V_r^{\otimes r})$ for $E_r^r \simeq kS_r$. Now Proposition \ref{restriction of Specht} shows that, if $\mu$ has at most $m$ terms, then we have $C_r(\mu) \simeq C_m(\mu)$. The surjection $V_m^{\otimes r}$ onto the fusion summand $V_m^{\underline \otimes r}$ then gives a surjection of $C_m(\mu)$ onto the cell module $\overline C_m(\mu)) = \Hom_{G_m}(\Delta_m(\mu), V_m^{\underline \otimes r})$ for the semisimple quotient algebra $\overline E_m^r$ of $kS_r$.  This latter module is only non-zero if $m < p$ and $\mu \in A_m(p)$. So if $\mu = \lambda$ with $\lambda$  as in the theorem, we see that $D_r(\lambda) = \overline C_m(\lambda)$. The theorem therefore follows from Proposition \ref{inductive formula} by observing that $\dim \overline C_m(\lambda) = (V_m^{\otimes r} : T(\lambda))$, cf. (\refeq{dim simple/tilting}).

\end{proof}

\begin{examplecounter}
Consider the case $p = 3$. Here we have
$$ \overline \Lambda ^r(3) = \begin{cases} \{(1)\} \text { if } r = 1,  \\ \{(r), ( (r+1)/2,  (r-1)/2)\} \text { if } r \geq 3  \text { is odd,} \\ \{ (r), ( r/2, r/2)\} \text { if } r \geq 2 \text { is even.} \end{cases}$$
The trivial partition $(r)$ of $r$ corresponds to the trivial simple module $D_r((r)) = k$ (this is true for all primes). For the unique $2$-parts partition $ \lambda$ in $\overline \Lambda ^r$ we get from Theorem \ref{main symm}

$$ \dim D_r(\lambda) = \begin{cases} \dim D_{r-1}(\lambda - \epsilon_1) \text { if $r$ is odd,} \\ \dim D_{r-1}(\lambda - \epsilon_2) \text { if $r$ is even.} \end{cases}$$
Hence we find $\dim D_r(\lambda) = 1$ for all $r$. This is of course also an immediate consequence of the fact that in this case $\overline E_2^r =k$, see Lemma \ref{the p-1 algebra}. Note that $\overline E_2^r$ is the modular Jones algebra appearing in \cite[Section 7] {A17} and it was observed there as well that the Jones algebras are all trivial in characteristic $3$.

\end{examplecounter}

\begin{examplecounter}
Consider now $p = 5$. Then for $r \geq 5$ we have exactly two partitions $\lambda^1(r)$ and $\lambda^2(r)$ of $r$ having $2$ non-zero parts, which belong to $A_2(5)$. Likewise, there are exactly $2$ partitions $\mu^1(r)$ and $\mu^2(r)$ of $r$ with $3$ non-zero parts, which belong to $A_3(p)$. Finally, there is a unique partition $\nu(r)$ of $r$ with $4$ non-zero parts which belongs to $A_4(p)$. To be precise we have
$$ \lambda^1(r) = ((r+2)/2, (r-2)/2) \text { and } \lambda^2(r) = (r/2, r/2), \text { if $r$ is even;}$$
whereas 
$$ \lambda^1(r) = ((r+3)/2, (r-3)/2) \text { and } \lambda^2(r) = ((r+1)/2,(r-1)/2), \text { if $r$ is odd.}$$
We leave to the reader to work out the formulas for $\mu^1(r), \mu^2(r)$. The expression for $\nu(r)$ is given in Lemma \ref{the p-1 algebra}. 

So $\overline \Lambda^r (p) 
= \{(r), \lambda^1(r), \lambda^2(r), \mu^1(r), \mu^2(r), \nu(r) \}$. We choose the enumeration such that $\lambda^1(r) > \lambda^2(r)$ (in the dominance order) and likewise $\mu^1(r) > \mu^2(r)$. For each of these 6 weights we can easily compute the dimension of the corresponding simple $kS_r$-modules via Theorem \ref{main symm}. In Table 1 we have illustrated the results for $ r \leq 10$. In this table the numbers in row $r$ (listed in the above order) are the dimensions of these $6$ simple $kS_r$-modules. When $r$ is small 
some weights are repeated, e.g.  for $r = 3$ we have $(3) = \lambda^1(3)$, $\lambda^2(3) = \mu^1(3)$ and $\mu^2(3) = \nu(3)$.

\vskip .5 cm 
\centerline {
 {\it Table  1.  Dimensions of simple modules for $kS_r$ when $p= 5$}}

\vskip .5cm
\centerline{
\begin{tabular} { r| c | c c |c c | cc } 
   r &(r) & $\lambda^1(r)$ & $\lambda^2(r)$ & $\mu^1(r)$ & $\mu^2(r)$& $\nu(r)$   \\   
\hline 
        1 & 1 & &1 &   1& & 1\\ 
  2 & 1 & 1 & 1 & 1 & 1 & 1 &    \\ 
  3 & 1& 1 & 2& 2 & 1 & 1  \\
  4 & 1 &3 & 2 & 2 & 3 & 1\\
  5 &1 & 3 & 5 & 3 & 5 &1\\
  6 & 1 & 8& 5& 8 &5  &1  \\
  7 &1 &8 &13 &8 & 13 & 1\\
  8 & 1& 21&13 &13 & 21 & 1\\
 9 & 1 &21&34&34 & 21 & 1\\
10 & 1 &55&34&34 & 55 & 1\\
\end{tabular}}
\vskip .5 cm

The table can easily be extended using the following formulas. Set $a^j(r) = \dim D_r(\lambda^j(r)), \; j=1, 2$. Then Theorem \ref{main symm} gives  $a^1(1) = 0, a^2(1) = 1 = a^2(2)$ and the following recursion rules
$$ a^1(2r+1) = a^1(2r) = a^1(2r-1) + a^2(2r-1); \; a^2(2r+2) = a^2(2r+1) = a^1(2r) + a^2(2r)$$
for $r \geq 1$.
Another way of phrasing this is  that $a^2(1), a^1(2), a^2(3), a^1(4), a^2(5), a^1(6), \cdots $ is the Fibonacci sequence. The first equations above then determine the remaining numbers $a^j(r)$. 

Again we leave it to the reader to find the similar recursion for the dimension of the simple modules corresponding to the $\mu^j(r)$'s. Apart from the fact, that the recursion rules coincide, we see no obvious representation theoretic explanation for the ``symmetry" between the numbers involving $\lambda$'s and those involving $\mu$'s.

\end{examplecounter}
\vskip 1cm

\section{Semisimple quotients of the Brauer algebras} \label{Brauer}

In this section we shall apply our results from Section 2 to the symplectic and orthogonal groups. This will allow us via Schur--Weyl duality for these groups to obtain certain semisimple quotients of the Brauer algebras over $k$ and to give an algorithm for finding the dimensions of the corresponding simple modules.

The Brauer algebra $\mathcal B_r(\delta)$ with parameter $\delta \in k$ may be defined via generators and relations, see e.g. \cite[Introduction]{DDH}. Alternatively, we may consider it first as 
%\begin{defn} Let $ r \in \Z_{> 0}$ and $\delta \in k$. The Brauer algebra $\mathcal B_r(\delta)$ is the k-algebra generated by $\{\sigma_i, u_i | i = 1, \cdots , r-1 \}$ subject to the relations 
%\begin{enumerate}
%\item {$ \sigma_i^2 = 1 \text { for all } i;$}
%\item $ \sigma_i \sigma_j = \sigma_j \sigma_i \text { when } |i-j| > 1;$
%\item $ \sigma_i \sigma_j \sigma_i= \sigma_j \sigma_i \sigma_j \text { when } |i-j| = 1; $ 
%\item $ u_i^2 = \delta u_i \text { for all } i;$
%\item $ u_iu_j = u_ju_i \text { when } |i-j| > 1;$ 
%\item $u_iu_ju_i = u_ju_iu_j  \text { when } |i-j| = 1;$
%\item $ \sigma_i u_j u_i = \sigma_j u_i \text { when } |i-j| = 1, \sigma_i u_i = u_i = u_i \sigma_i \text { for all i.} $
%\end{enumerate}
%\end{defn}
just a vector space over $k$ with basis consisting of the so-called Brauer diagrams with $r$ strands. Then one defines a multiplication of two such diagrams by stacking the second diagram on top of the first, see e.g. \cite{B} or \cite[Section 4]{GL}. This gives an algebra structure on $\mathcal B_r(\delta)$.

We have Brauer algebras for an arbitrary parameter $\delta \in k$. However, the ones that are connected with our endomorhism algebras are those where $\delta$ is the image in $k$ of an integer, i.e. where $\delta$ belongs to the prime field $\Fu \subset k$. It follows from the various versions of the Schur--Weyl duality (see below) that in this case $\mathcal B_r(\delta)$ surjects onto the endomorphism algebra of the $r$'th tensor power of the natural modules for appropriate symplectic and orthogonal groups.

\subsection{Quotients arising from the symplectic groups} \label{quotients sp}

We shall use the notation from Section \ref{Sp}. In particular, $V$ will be a $2n$-dimensional symplectic vector space, which we from now on denote $V_n$. We set $G_n = SP(V_n)$ and $E_n^r = \End_{G_n}(V_n^{ \otimes r})$. 

Consider now the fusion summand $V_n^{\underline \otimes r}$ of $V_n^{\otimes r}$ with endomorphism ring $\overline E_n^r = \End_{G_n}(V^{\underline \otimes r})$. Then exactly as in Proposition \ref{surj GL} we obtain:
\begin{prop} \label{quotients Sp} For all $n$ and $r$ the algebra $\overline E_n^r$ is a semisimple quotient of $E_n^r$. 
\end{prop}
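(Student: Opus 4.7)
The plan is to reduce this to a direct application of Corollary \ref{fusion} together with the semisimplicity criterion from Theorem \ref{ss}; no new symplectic-specific argument is really required beyond confirming that $V_n$ fits into the tilting framework.

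First I would verify that the natural module $V_n$ is a tilting module for $G_n = SP(V_n)$. By Remark \ref{A for type C} the highest weight $\epsilon_1$ is minuscule, so $V_n = \Delta_n(\epsilon_1) = L_n(\epsilon_1) = \nabla_n(\epsilon_1)$; in particular $V_n$ has both a $\Delta$- and a $\nabla$-filtration. The Wang--Donkin--Mathieu theorem then guarantees that every tensor power $V_n^{\otimes r}$ is again a tilting module for $G_n$, so that the fusion summand $V_n^{\underline{\otimes} r}$ and its endomorphism algebra $\overline E_n^r$ are well defined.

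With this in hand I would apply Corollary \ref{fusion} with $T = V_n$. This immediately yields surjectivity of the natural homomorphism
\[
E_n^r = \End_{G_n}(V_n^{\otimes r}) \longrightarrow \End_{G_n}(V_n^{\underline{\otimes} r}) = \overline E_n^r,
\]
so $\overline E_n^r$ is a quotient algebra of $E_n^r$. To upgrade this to a \emph{semisimple} quotient, I would invoke Theorem \ref{ss}: by construction $V_n^{\underline{\otimes} r}$ lies in the fusion category $\F$, whose indecomposable objects are the $T_n(\lambda) = L_n(\lambda)$ for $\lambda \in A_n(p)$. Hence $V_n^{\underline{\otimes} r}$ is a semisimple $G_n$-module, and Theorem \ref{ss} then gives semisimplicity of $\overline E_n^r$.

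There is no real obstacle here; the statement is the symplectic analogue of what was done for $GL$ in Theorem \ref{ss quotients}, and the Levi-restriction argument of Proposition \ref{surj GL} is not needed because we are not asserting compatibility between different ranks $n$ (indeed, as in Remark \ref{p-term is 0}(b), such compatibility would not be expected). The only point deserving care is to be sure that Remark \ref{A for type C} really does put $V_n$ in the class of tilting modules to which Corollary \ref{fusion} applies, which it does by the minuscule argument above.
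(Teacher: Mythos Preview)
Your proof is correct and is essentially what the paper intends: the proposition is an immediate instance of Corollary~\ref{fusion} (surjectivity) together with Theorem~\ref{ss}/Example~\ref{Ex1} (semisimplicity), once one notes that $V_n$ is tilting. The paper itself gives no separate argument beyond the phrase ``exactly as in Proposition~\ref{surj GL}''---a somewhat loose cross-reference, since, as you correctly observe, the Levi-restriction mechanism of Proposition~\ref{surj GL} plays no role here; the relevant precedent is really Corollary~\ref{fusion} and the discussion in Example~\ref{Ex1}.
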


Recalling the description of $A_m(p)$ from Section 2.3 and using Remark 3 we see that $\overline E_n^r = 0$ unless $2n \leq p-1$. 
In contrast with the $GL(V)$ case we usually do not have $\overline E_1^r = k$. In fact, $G_1 = SL_2(k)$ and the tensor powers of the natural module for $G_1$ therefore typically have many summands. On the other hand, the top non-zero term is always equal to $k$: 

\begin{prop} 
$\overline E_{(p-1)/2}^r = k$ for all $r$.
\end{prop}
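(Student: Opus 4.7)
The plan is to show that $V_n^{\underline{\otimes} r}$ is itself a simple object of the fusion category $\F$ for every $r \geq 0$ when $n=(p-1)/2$. By Schur's lemma (equivalently, by Theorem \ref{ss} applied to the semisimple algebra $\overline E_n^r$), this immediately yields $\overline E_n^r = k$.

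First I would pin down $A_n(p)$ for $n=(p-1)/2$. Since $p-2n = 1$, the description in Section \ref{Sp} forces $A_n(p)= \{0,\epsilon_1\}$ for all $n \geq 1$ (the boundary case $n=1$, i.e.\ $p=3$, gives the same set from the $n=1$ formula). Thus $\F$ has exactly two simple objects, namely $k = L(0) = T(0)$ and $V_n = L(\epsilon_1) = T(\epsilon_1)$; the latter is both simple (since $\epsilon_1$ is minuscule) and tilting (since $\epsilon_1 \in A_n(p)$, so $\Delta_n(\epsilon_1) = \nabla_n(\epsilon_1)$).

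The crux of the argument is to prove $V_n \underline{\otimes} V_n = k$. This rests on two observations about the decomposition of $V_n \otimes V_n$ into indecomposable tiltings.

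(i) The weight $\epsilon_1$ does not occur in $V_n \otimes V_n$: every weight of $V_n \otimes V_n$ is a sum of two elements of $\{\pm\epsilon_1,\dots,\pm\epsilon_n\}$ and therefore has even coordinate-sum (equivalently, the central element $-1 \in SP(V_n)$ acts trivially on $V_n \otimes V_n$ but by $-1$ on any $T(\epsilon_1)$-summand). Hence $T(\epsilon_1)$ cannot be a summand of $V_n \otimes V_n$.

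(ii) The trivial module $T(0) = k$ is a direct summand of $V_n \otimes V_n$ with multiplicity exactly $1$. The symplectic form gives an isomorphism $V_n \cong V_n^*$, so the coevaluation and evaluation maps yield $G_n$-equivariant maps $k \xrightarrow{\eta} V_n \otimes V_n \xrightarrow{\varepsilon} k$ whose composition is multiplication by $\dim V_n = 2n = p-1 \equiv -1 \pmod p$, and hence is nonzero in $k$. This splits off a copy of $k$ as a direct summand, and the multiplicity equals $\dim \Hom_{G_n}(k, V_n \otimes V_n) = \dim \End_{G_n}(V_n) = 1$ by Schur's lemma (using that $V_n$ is simple and self-dual).

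Combining (i) and (ii), the only tilting summand $T(\lambda)$ of $V_n \otimes V_n$ with $\lambda \in A_n(p)$ is a single $T(0)$, so $V_n \underline{\otimes} V_n = k$. An easy induction on $r$ now finishes the proof: $V_n^{\underline{\otimes} 0} = k$, $V_n^{\underline{\otimes} 1} = V_n$, and the rules $V_n \underline{\otimes} k = V_n$ (since $V_n = T(\epsilon_1)$ is already a fusion object) and $V_n \underline{\otimes} V_n = k$ show inductively that $V_n^{\underline{\otimes} r}$ equals $k$ for $r$ even and $V_n$ for $r$ odd. In either case it is simple, so $\overline E_n^r = k$ for all $r$. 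The only subtle point in the argument is the multiplicity check in (ii), which depends crucially on $p \nmid \dim V_n$; everything else is bookkeeping about the alcove and the weights of $V_n \otimes V_n$.
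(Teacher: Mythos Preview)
Your proof is correct and follows the paper's overall strategy: both reduce to showing $V_n \underline{\otimes} V_n = k$ for $n=(p-1)/2$ and then induct on $r$. The paper reaches $V_n \underline{\otimes} V_n = k$ by writing down the $\Delta$-filtration of $V_n \otimes V_n$ (factors $\Delta(2\epsilon_1)$, $\Delta(\epsilon_1+\epsilon_2)$, $\Delta(0)$) and noting that $2\epsilon_1$ and $\epsilon_1+\epsilon_2$ lie on the upper wall of $A_n(p)$, so the corresponding summands are negligible. Your route is slightly more elementary: the parity argument rules out $T(\epsilon_1)$ without any filtration analysis, and the evaluation/coevaluation splitting (using $p\nmid \dim V_n = p-1$) together with $\dim \End_{G_n}(V_n)=1$ pins down the multiplicity of $T(0)$. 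One small phrasing point: ``the multiplicity equals $\dim \Hom_{G_n}(k, V_n\otimes V_n)$'' is not a general identity for tilting modules; what you are really using is the inequality $(V_n\otimes V_n:T(0)) \leq \dim \Hom_{G_n}(k, V_n\otimes V_n) = 1$, which combined with your splitting argument gives equality.
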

\begin{proof}
In this proof we drop the subscript $ {(p-1)/2}$ on $V$ and $\Delta$. We have that $V \otimes V $ has a $\Delta$-filtration with factors  $\Delta(2 \epsilon_1), \Delta(\epsilon_1 + \epsilon_2)$ and $\Delta(0) = k$. The first two of these have highest weights on the upper wall of $A_{(p-1)/2}(p)$ whereas the highest weight $0$ of the last term belongs to $A_{(p-1)/2}(p)$. It follows that 
$V \underline \otimes V = k$. Hence,
$$V^{\underline \otimes r} = \begin{cases} V \text { if $r$ is odd,} \\ k \text { if $r $ is even.} \end{cases}$$
The claim follows.
\end{proof}

The analogue of Proposition \ref{inductive formula} is

\begin{prop} \label{inductive formula Sp}
Let $m \geq 1$ and suppose $\lambda \in A_m(p)$. Then 
$$(V_m^{\otimes r}: T_m(\lambda)) = \sum_{i: \lambda \pm \epsilon_i \in A_m(p)} (V_m^{\otimes (r-1)}: T_m(\lambda \pm \epsilon_i)).$$
\end{prop}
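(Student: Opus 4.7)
The plan is to induct on $r$. Writing $V_m^{\otimes r} = V_m \otimes V_m^{\otimes(r-1)}$ and decomposing $V_m^{\otimes(r-1)} \cong \bigoplus_\mu T_m(\mu)^{a_\mu}$ with $a_\mu = (V_m^{\otimes(r-1)}:T_m(\mu))$ gives
\[
(V_m^{\otimes r}:T_m(\lambda)) = \sum_{\mu \in X_m^+} a_\mu \cdot (V_m \otimes T_m(\mu):T_m(\lambda)).
\]
For $\mu \in X_m^+ \setminus A_m(p)$ the tilting module $T_m(\mu)$ is negligible, so by the tensor ideal property of $\mathcal N$ recalled in Section~\ref{Fusion} the module $V_m \otimes T_m(\mu)$ is also negligible and in particular has no summand isomorphic to $T_m(\lambda)$ for $\lambda \in A_m(p)$. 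Only $\mu \in A_m(p)$ contributes.

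For such $\mu$ one has $T_m(\mu) = \Delta_m(\mu)$ by Remark~\ref{alcove A}a). Since $V_m$ is a minuscule tilting module with weights $\pm\epsilon_1,\dots,\pm\epsilon_m$, each of multiplicity one, combining the minuscule Pieri rule with the Wang--Donkin--Mathieu theorem gives a $\Delta$-filtration of $V_m \otimes \Delta_m(\mu)$ whose sections are exactly the $\Delta_m(\mu+\nu)$ for $\nu \in \{\pm\epsilon_i\}$ with $\mu+\nu \in X_m^+$, each appearing once. A short root-datum check in type $C_m$ (using $|\langle\epsilon_i,\alpha^\vee\rangle| \le 1$ for all $\alpha \in R^+$) shows that whenever $\mu \in A_m(p)$ and $\mu+\nu \in X_m^+$, one has $\mu+\nu \in \overline A_m(p)$; Remark~\ref{alcove A}a) then yields $\Delta_m(\mu+\nu) = T_m(\mu+\nu)$. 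Because $V_m \otimes T_m(\mu)$ is itself tilting and its character is a sum of characters of these simple tiltings, the linear independence of tilting characters forces
\[
V_m \otimes T_m(\mu) \;\cong\; \bigoplus_{\nu \in \{\pm\epsilon_i\}:\,\mu+\nu \in X_m^+} T_m(\mu+\nu).
\]

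Among these summands, those with $\mu+\nu$ on the upper wall of $A_m(p)$ are negligible (their highest weight $\xi$ satisfies $\langle\xi+\rho,\alpha^\vee\rangle = p$ for some $\alpha \in R^+$, so Weyl's dimension formula makes $\dim T_m(\xi)$ divisible by $p$), while those with $\mu+\nu \in A_m(p)$ are non-negligible. Hence $(V_m \otimes T_m(\mu):T_m(\lambda))$ equals $1$ whenever $\lambda = \mu\pm\epsilon_i \in A_m(p)$ for some $(i,\pm)$ and $0$ otherwise. Substituting this into the original sum and reindexing via $\mu = \lambda \mp \epsilon_i$ yields the stated identity.

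The main obstacle is the tilting decomposition of $V_m \otimes T_m(\mu)$ for $\mu \in A_m(p)$: one has to combine the minuscule Pieri rule for Weyl-module tensor products with the observation that a non-dominant $\mu+\nu$ automatically forces $\chi(\mu+\nu) = 0$ (because $\langle\nu,\alpha^\vee\rangle \ge -1$ means the wall $\langle\mu+\nu+\rho,\alpha^\vee\rangle = 0$ is hit precisely when dominance fails), and then verify that every dominant $\mu+\nu$ already sits inside $\overline A_m(p)$, so that the Weyl-module sections coincide with the indecomposable tiltings. Both of these properties rest on the minusculeness of $V_m = \Delta_m(\epsilon_1)$; without it the Pieri expansion would involve wall-reflections and signed cancellations, and one would have to handle tilting envelopes strictly larger than Weyl modules.
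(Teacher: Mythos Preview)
Your argument is correct and follows essentially the same route as the paper's proof: both use that $\epsilon_1$ is minuscule to identify the $\Delta$-factors of $V_m \otimes \Delta_m(\mu)$ as the $\Delta_m(\mu\pm\epsilon_i)$ with dominant highest weight, then observe that for $\mu \in A_m(p)$ these all lie in $\overline{A_m(p)}$ and hence coincide with the corresponding indecomposable tiltings, so that only the summands with highest weight in $A_m(p)$ survive in the fusion product. Your version is simply more explicit, spelling out the tensor-ideal argument for the negligible summands of $V_m^{\otimes(r-1)}$ and the root-datum check $|\langle\epsilon_i,\alpha^\vee\rangle|\le 1$, which the paper leaves implicit.
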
 

\begin{proof} As $\epsilon_1$ is minuscule we have for any $\lambda \in X_n^+$ that the $\Delta$-factors in $\Delta(\lambda) \otimes V_m$ are those with highest weights $\lambda + \mu$ where $\mu$ runs through the weights of $V_m$ (ignoring possible $\mu$'s for which $\lambda + \mu$ belong to the boundary of $X_m^+$). Likewise,  if $\lambda \in A_m(p)$ then the same highest weights all belong to the closure of $A_m(p)$. Hence the fusion product $\Delta(\lambda) \underline \otimes V$ is the direct sum of all $\Delta_m(\lambda + \mu)$ for which $\lambda + \mu \in A_m(p)$. As the possible $\mu$'s are the $\pm \epsilon_i$ (each having multiplicity $1$) we get the formula.
\end{proof} 

Recall now the Schur--Weyl duality theorem for $SP(V)$, see \cite{DDH}.

\begin{thm} \label{Schur-Weyl Sp} There is an action of $\mathcal B_r(-2n)$ on $V_n^{\otimes r}$ which commutes with the action of $G_n$. The corresponding 
homomorphism $\mathcal B_r(-2n) \rightarrow E_n^r$ is surjective for all $n$ and for $n\geq r$ it is an isomorphism.
\end{thm}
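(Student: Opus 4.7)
The plan is to proceed in three steps: construct the action, establish surjectivity, and then obtain injectivity for $n\geq r$ by a dimension count.

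First, I would describe the action diagrammatically. The Brauer algebra $\mathcal B_r(\delta)$ is generated by simple transpositions $s_1,\ldots,s_{r-1}$ and ``cup--cap'' elements $e_1,\ldots,e_{r-1}$ subject to the usual relations with loop parameter $\delta$. On $V_n^{\otimes r}$, let $s_i$ act by swapping the $i$-th and $(i+1)$-st tensor factors. Using the non-degenerate symplectic form $\omega\colon V_n\otimes V_n\to k$ and its dual coevaluation $\omega'\colon k\to V_n\otimes V_n$ (whose image is the $G_n$-invariant element corresponding to $\omega$), define $e_i$ as the map that contracts positions $i$ and $i+1$ via $\omega$ and reinserts $\omega'$ at those slots. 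A direct check shows the Brauer relations hold, the loop value being $\omega\circ\omega'=-\dim V_n=-2n$ because of the antisymmetric sign. Since both $\omega$ and $\omega'$ are $G_n$-equivariant and $s_i$ manifestly commutes with the diagonal $G_n$-action, all generators commute with $G_n$, yielding the homomorphism $\Phi\colon\mathcal B_r(-2n)\to E_n^r$.

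Next, I would establish the surjectivity of $\Phi$. This is essentially the First Fundamental Theorem of invariant theory for $SP(V_n)$. Over a field of characteristic zero this is classical (Brauer--Weyl): every $G_n$-endomorphism of $V_n^{\otimes r}$ is a linear combination of images of Brauer diagrams. The passage to characteristic $p$ is the main obstacle. The approach is to introduce an integral form: $V_n^{\otimes r}$ comes from a $\mathbb Z$-lattice $L$ carrying commuting actions of the Chevalley group scheme of type $C_n$ and of the diagrammatic $\mathbb Z$-form of $\mathcal B_r(-2n)$, and $E_n^r$ arises by base change from $\mathrm{End}_{SP_{2n,\mathbb Z}}(L)$. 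Since $V_n^{\otimes r}$ is a tilting module, its endomorphism algebra is free of the same rank in every characteristic, so $\dim_k E_n^r$ equals the characteristic-zero value $\dim_{\mathbb Q}E_n^r(\mathbb Q)$. Surjectivity over $\mathbb Q$ therefore transfers to $k$ via Nakayama's lemma applied to the image of the integral Brauer algebra. This is the step that requires the most care and where references such as \cite{DDH} do the heavy lifting.

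Finally, for $n\geq r$, I would compare dimensions to conclude injectivity. The Brauer algebra $\mathcal B_r(-2n)$ has $k$-dimension $(2r-1)!!$, the number of perfect matchings on $2r$ points, independent of the parameter. For $n\geq r$, every dominant weight $\lambda$ occurring as a highest weight in a $\Delta$-filtration of $V_n^{\otimes r}$ has at most $r$ nonzero parts, and the standard symplectic up--down tableaux count (which one can extract inductively from the branching rule underlying Proposition \ref{inductive formula Sp}, ignoring alcove restrictions since we are only asking about $\Delta$-multiplicities) gives $\sum_\lambda(V_n^{\otimes r}:\Delta_n(\lambda))^2=(2r-1)!!$. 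Since $\dim_k E_n^r$ equals this sum (because $V_n^{\otimes r}$ is tilting, hence has both $\Delta$- and $\nabla$-filtrations with matching multiplicities, so the Hom-spaces between $\Delta$'s and $\nabla$'s pair off), combining with the surjectivity from the previous step forces $\Phi$ to be an isomorphism. The principal difficulty of the proof is therefore concentrated in the second step, namely the transfer of the classical invariant-theoretic result to positive characteristic in a manner compatible with the tilting module framework used throughout this paper.
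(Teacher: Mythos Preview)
The paper does not prove this theorem: it is stated with the preface ``Recall now the Schur--Weyl duality theorem for $SP(V)$, see \cite{DDH}'' and no argument is given. So there is no proof in the paper to compare your proposal against; the result is simply imported from Dipper--Doty--Hu.

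Your sketch is a reasonable outline of how such a proof actually goes in \cite{DDH} and related references: the diagrammatic construction of the action with loop value $-2n$, the reduction of surjectivity to a characteristic-zero First Fundamental Theorem via an integral form and the base-change stability of endomorphism rings of tilting modules, and the dimension comparison $\dim_k E_n^r=\sum_\lambda (V_n^{\otimes r}:\Delta_n(\lambda))^2=(2r-1)!!$ for $n\geq r$. One point to tighten: the sentence ``its endomorphism algebra is free of the same rank in every characteristic'' is not quite what you need or what is true as phrased; what you want is that $\End_{G_n}(V_n^{\otimes r})$ base-changes correctly from the integral form because Ext$^1$ between Weyl and dual Weyl modules vanishes, so the $\Hom$-lattice is torsion-free of the expected rank. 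With that adjustment your argument is sound, and it is essentially the strategy carried out in the cited reference.
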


The simple modules for $\mathcal B_r(\delta)$ are parametrized by the $p$-regular partitions of $r, r-2, \cdots$, see \cite[Section 4]{GL}, and we shall denote them $D_{\mathcal B_r(\delta)}(\lambda)$.
 This parametrization holds for any $\delta \in k$. However, in this section we only consider the case where $\delta$ is the image in $k$ of a negative even number. We identify $\delta$ with an integer in $[0, p-1]$. 
%For the other cases $\mathcal B_r(\delta)$ is related to supergroups, and for this we refer the reader to \cite{ES}.

Assume $\delta$ is odd and define the following subsets of weights
$$ \overline \Lambda^r(\delta, p) =  (\Lambda^r \cup \Lambda^{r-2} \cup \cdots ) \cap A_{(p-\delta)/2}(p).$$
So if $\delta < p-2$ then $ \overline \Lambda^r(\delta, p)$ consists of partitions $\lambda = (\lambda_1, \lambda_2, \cdots \lambda_{(p-\delta)/2})$ with $|\lambda| = r - 2i$ for some $i \leq r/2$ which satisfy $\lambda_1 + \lambda_2 \leq \delta$. On the other hand,
$\overline \Lambda^r(p-2, p) = \{(r-2i) | r-p+2 \leq 2i \leq r\}$.

Note that all partitions in $\overline \Lambda^r(p)$ are $p$-regular.

\begin{thm} \label{main brauer Sp}
Let $r > 0$ and  consider an odd number $\delta \in [0,p-1]$. Suppose $\lambda \in \overline \Lambda^r(\delta, p)$. Then the dimension of the simple $\mathcal B_r(\delta)$-module $D_{\mathcal B_r(\delta)}(\lambda)$ is recursively determined by 
$$ \dim D_{\mathcal B_r(\delta)}(\lambda) = \sum_{i: \lambda \pm \epsilon_i \in \overline \Lambda^{(r-1)}(\delta, p)} \dim D_{\mathcal B_{r-1}(\delta)}(\lambda \pm \epsilon_i).$$
\end{thm}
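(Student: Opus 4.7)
The plan is to reduce the problem to computing tilting-summand multiplicities in $V_n^{\otimes r}$ for a suitably chosen $n$, by combining Schur--Weyl duality with the fusion-quotient surjection and then invoking Proposition \ref{inductive formula Sp}.

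The first move will be to set $n = (p-\delta)/2$; since $p$ and $\delta$ are both odd, $n$ is a positive integer and $-2n \equiv \delta \pmod{p}$, so $\mathcal B_r(\delta) = \mathcal B_r(-2n)$ as $k$-algebras. Theorem \ref{Schur-Weyl Sp} then gives a surjection $\mathcal B_r(\delta) \twoheadrightarrow E_n^r$, and composing with the surjection from Proposition \ref{quotients Sp} yields a surjective algebra homomorphism $\psi: \mathcal B_r(\delta) \twoheadrightarrow \overline E_n^r$. By Example \ref{Ex1} the target is a semisimple cellular algebra whose simple modules $D_Q(\lambda)$ are indexed by $\lambda \in A_n(p) \cap \Lambda_{0, V_n}^r$, with $\dim D_Q(\lambda) = (V_n^{\otimes r}: T_n(\lambda))$. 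Iterating the $\Delta$-factor computation from the proof of Proposition \ref{inductive formula Sp} shows that the $\Delta$-labels of $V_n^{\otimes r}$ are precisely the partitions of $r, r-2, r-4, \ldots$ with at most $n$ parts; intersecting with $A_n(p)$ yields $\overline \Lambda^r(\delta, p)$. Since $T_n(\lambda) = \Delta_n(\lambda) = L_n(\lambda)$ for any $\lambda \in A_n(p)$ by Remark \ref{alcove A}(a), each such $\Delta$-factor is automatically a tilting summand, so $\Lambda_{0, V_n}^r \cap A_n(p) = \overline \Lambda^r(\delta, p)$.

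The chief obstacle will be identifying the Brauer simple $D_{\mathcal B_r(\delta)}(\lambda)$ at $\lambda \in \overline \Lambda^r(\delta, p)$ with the pullback $\psi^* D_Q(\lambda)$. The Brauer algebra is cellular with weight poset $\Lambda^r \cup \Lambda^{r-2} \cup \cdots$, and by \cite{AST1} the cellular structure on $E_n^r$ is compatible with the Schur--Weyl surjection: for a partition $\lambda$ having at most $n$ parts the Brauer cell module at $\lambda$ should surject onto the cell module $\Hom_{G_n}(\Delta_n(\lambda), V_n^{\otimes r})$ of $E_n^r$, which through $\psi$ in turn surjects onto the (simple, by Theorem \ref{ss}) cell module of $\overline E_n^r$ labeled by $\lambda$. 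Uniqueness of the simple head of a cell module then forces $D_{\mathcal B_r(\delta)}(\lambda) \cong \psi^* D_Q(\lambda)$, so in particular $\dim D_{\mathcal B_r(\delta)}(\lambda) = (V_n^{\otimes r}: T_n(\lambda))$. Pinning down this cellular compatibility carefully is the only step that is not a direct application of results already in the excerpt.

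Once the identification is in place, the recursion is immediate: applying Proposition \ref{inductive formula Sp} with $m = n$ to $\lambda \in \overline \Lambda^r(\delta, p) \subset A_n(p)$ gives
$$(V_n^{\otimes r}: T_n(\lambda)) = \sum_{i:\, \lambda \pm \epsilon_i \in A_n(p)} (V_n^{\otimes (r-1)}: T_n(\lambda \pm \epsilon_i)).$$
Because $|\lambda \pm \epsilon_i| = |\lambda| \pm 1$ has the parity of $r-1$, the condition $\lambda \pm \epsilon_i \in A_n(p)$ coincides with $\lambda \pm \epsilon_i \in \overline \Lambda^{r-1}(\delta, p)$, and translating both sides back into dimensions of Brauer simples via the identification above (applied at both levels $r$ and $r-1$) yields the claimed formula.
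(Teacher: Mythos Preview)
Your proof is correct and follows essentially the same route as the paper: set $n=(p-\delta)/2$, compose Schur--Weyl duality with the fusion surjection to realize $\overline E_n^r$ as a semisimple quotient of $\mathcal B_r(\delta)$, identify $\dim D_{\mathcal B_r(\delta)}(\lambda)$ with $(V_n^{\otimes r}:T_n(\lambda))$ via (\ref{dim simple/tilting}), and then invoke Proposition \ref{inductive formula Sp}. You have in fact supplied more detail than the paper does on the identification of the Brauer simple with the pullback of the $\overline E_n^r$-simple through the cellular structure; the paper's proof is two sentences and leaves this implicit. One small imprecision: the index conditions $\lambda\pm\epsilon_i\in A_n(p)$ and $\lambda\pm\epsilon_i\in\overline\Lambda^{r-1}(\delta,p)$ do not literally coincide (when $|\lambda|=r$ one can have $\lambda+\epsilon_i\in A_n(p)$ with $|\lambda+\epsilon_i|=r+1$), but the extra terms contribute zero since $T_n(\lambda+\epsilon_i)$ cannot occur in $V_n^{\otimes(r-1)}$, so the sums agree.
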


\begin{proof} Combining Theorem \ref{Schur-Weyl Sp} with Theorem \ref{quotients Sp} we see that $\overline E_{(p-\delta)/2}^r$ is a semisimple quotient of $B_r(\delta)$. Then the theorem follows from Proposition \ref{inductive formula Sp} by recalling that the dimensions of the simple modules for 
$\overline E_{(p-\delta)/2}^r$ coincide with the tilting multiplicities in $V_{(p-\delta)/2}^{\otimes r}$, see (\refeq{dim simple/tilting}).
\end{proof}

\begin{remarkcounter} If $n \equiv (p-\delta)/2\; (\mo p)$ for some odd number $\delta \in [0,p-1]$, then $-2n \equiv \delta$. Hence, the theorem describes a class of simple modules for $\mathcal B_r(-2n)$ for all such $n$.
\end{remarkcounter}

\begin{examplecounter} \label{brauer p=7}
Consider $p = 7$. Then the relevant $\delta$'s are $5, 3$ and $1$. The weight set $\overline \Lambda^r(5,7)$ contains $3$ elements  (except for $r < 4$ where there are fewer) $\lambda^1(r), \lambda^2(r), \lambda^3(r)$ listed in descending order, namely $(4), (2), (0)$, when $ r$ is even, and $(5), (3), (1)$, when $r$ is odd. Likewise, $\overline \Lambda^r(3,7)$ contains $3$ elements  (except for $r = 1$) $\mu^1(r), \mu^2(r), \mu^3(r)$ listed in descending order, namely $(2,0), (1,1), (0,0)$, when $ r$ is even, and $(3,0), (2,1), (1,0)$, when $r$ is odd. Finally, $\overline \Lambda^r(1,7)$ consists of a unique element $\nu(r)$, namely $\nu(r) = (0,0,0)$, when $r$ is even, and $\nu(r) =(1,0,0)$, when $r$ is odd.

In Table 2 we have listed the dimensions of the simple modules for $\mathcal B_r(\delta) $ for $r \leq 10$. These numbers are computed recursively using Theorem \ref{main brauer Sp}.
\end{examplecounter}

\eject
\centerline {
{ \it Table  2.  Dimensions of simple modules for $\mathcal B_r(\delta)$ when $p= 7$ and $\delta = 5, 3, 1$.
}}
\vskip .5cm
\centerline {
\begin{tabular}{ r| c  c c |c c c |c| c c c c c c c c c c}
        $\delta$ &&5&&&3&&1& \\ \hline
        r & $\lambda^1(r)$ & $\lambda^2(r)$ & $\lambda^3(r)$ & $\mu^1(r)$ & $\mu^2(r)$ & $\mu^3(r)$ & $\nu(r)$    \\   \hline 
     1 &  & & 1  &  & &1 & 1& \\ 
  2 &  & 1 &1  & 1 & 1 & 1 & 1   \\ 
  3 & & 1& 2& 1 & 2 & 3 & 1  \\
  4 & 1 &3 & 2 & 6 & 5 & 3& 1 \\
  5 &1 & 4 & 5 & 6 & 11 &14 & 1\\
  6 & 5 & 9& 5& 31&25 &14 & 1  \\
  7 &5 &14 &14 &31& 56 & 70 & 1\\
  8 & 19& 28&14 &157 & 126 &70 & 1\\
 9 & 19 &47&42&157 & 283 & 353 & 1\\
10 & 66 &89&42&793 & 636 & 353 & 1\\
\end{tabular}}
\vskip 1 cm 
\subsection{Quotients arising from the orthogonal groups}
In this section we consider the orthogonal groups. Again we shall see that the very same methods as we used for general linear groups in Section 4 apply in this case.

We shall use the notation from Section \ref{O}. In particular, $V$ will be a vector space with a non-degenerate symmetric bilinear form. If $\dim V$ is odd, we write $\dim V = 2n +1$ and set $V_n = V$ and $G_n = O(V)$. Likewise, if $\dim V$ is even, we write $\dim V = 2n$ and set $V_n = V$ and $G_n = O(V_n)$. In both cases we denote by $E_n^r$ the endomorphism algebra $ \End_{G_n}(V_n^{ \otimes r})$ and by $\overline E_n^r$ the algebra $\End_{G_n}(V_n^{\underline \otimes r})$.

As in the general linear and the symplectic case we have: 
\begin{prop} \label{quotients O} For all $n$ and $r$ the algebra $\overline E_n^r$ is a semisimple quotient of $E_n^r$. 
\end{prop}

Recalling the description of $A_m(p)$ from Section \ref{O} we observe:
\begin{remarkcounter} \label{orthogonal barE} \begin{enumerate} 
\item [a)]By Remarks \ref{A for type B} and \ref{A for type D} we have $\overline E_n^r$ = 0 unless $2n < p-2$ in the odd case, respectively $2n < p+2$ in the even case.
\item [b)] In the even case we get $\overline E_{(p+1)/2}^r = k$ for all $r$ using the same argument as in the symplectic case. On the other hand, this argument does not apply to the odd case, where in fact the highest term $\overline E_{(p-3)/2}^r$ is usually not  $k$ (this is illustrated in Example \ref{brauer2 p=7} below).  

\end{enumerate}
\end{remarkcounter}

The Schur--Weyl duality for orthogonal groups \cite[Theorem 1.2]{DH} says in particular: 

\begin{thm} \label{Schur-Weyl O} Set $\delta = \dim V_n$. There is an action of $\mathcal B_r(\delta)$ on $V_n^{\otimes r}$ which commutes with the action of $G_n$. The corresponding 
homomorphism $\mathcal B_r(\delta) \rightarrow E_n^r$ is surjective for all $n$.
\end{thm}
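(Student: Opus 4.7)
My plan is to first exhibit the $\mathcal{B}_r(\delta)$-action on $V_n^{\otimes r}$ concretely, verify commutativity with $G_n$ by inspection, and then establish surjectivity through a characteristic-zero comparison argument that exploits the tilting property of $V_n$.

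For the construction, recall that $\mathcal{B}_r(\delta)$ is generated by the simple transpositions $s_i$ and Brauer hooks $e_i$ for $1 \leq i \leq r-1$. I would let $s_i$ act as the place permutation swapping tensor slots $i$ and $i+1$. For $e_i$, the non-degenerate symmetric bilinear form on $V_n$ yields a $G_n$-equivariant evaluation $b : V_n \otimes V_n \to k$ and a coevaluation $c : k \to V_n \otimes V_n$ (given by $1 \mapsto \sum_j v_j \otimes v_j^{\ast}$ for any basis and its dual with respect to the form), and I would set $e_i = \mathrm{id}^{\otimes(i-1)} \otimes (c \circ b) \otimes \mathrm{id}^{\otimes(r-i-1)}$. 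A routine diagrammatic check confirms the Brauer relations, with the loop value $\delta = \dim V_n$ arising from $b \circ c = (\dim V_n) \cdot \mathrm{id}_k$. Both $s_i$ and $e_i$ commute with $G_n$: the place permutations commute with any diagonal action, and $b$ and $c$ are $G_n$-equivariant because $G_n = O(V_n)$ is by definition the stabilizer of the form.

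For surjectivity, I would reduce to characteristic zero integrally. By Remarks \ref{A for type B} and \ref{A for type D} (with the case of $\dim V_n$ odd in characteristic $2$ excluded from the setup), $V_n$ is a tilting module for $G_n$, and hence so is $V_n^{\otimes r}$ by the Wang--Donkin--Mathieu theorem. Working with a Chevalley $\mathbb{Z}$-form $G_{n,\mathbb{Z}}$ acting on an integral lattice $V_{\mathbb{Z}}$ preserving the bilinear form, both the Brauer algebra $\mathcal{B}_r(\delta)_{\mathbb{Z}}$ and the endomorphism ring $\End_{G_{n,\mathbb{Z}}}(V_{\mathbb{Z}}^{\otimes r})$ are defined over $\mathbb{Z}$. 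The tilting nature of $V_{\mathbb{Z}}^{\otimes r}$ ensures this endomorphism ring is a free $\mathbb{Z}$-module whose formation commutes with base change. Over $\mathbb{Q}$, surjectivity of the corresponding map is the classical Brauer--Schur--Weyl theorem, so the cokernel of the integral map is torsion. Since the target is torsion-free, the map is already surjective over $\mathbb{Z}$, and base change to $k$ yields surjectivity of $\mathcal{B}_r(\delta) \to E_n^r$.

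The main difficulty is justifying the good integral behavior of the endomorphism algebra, namely that $\End_{G_{n,\mathbb{Z}}}(V_{\mathbb{Z}}^{\otimes r})$ is a free $\mathbb{Z}$-module whose base change to $k$ agrees with $\End_{G_n}(V_n^{\otimes r})$. This follows from an integral $\Delta$- and $\nabla$-filtration of $V_{\mathbb{Z}}^{\otimes r}$ together with the vanishing of $\Ext^1_{G_{n,\mathbb{Z}}}(\Delta(\lambda), \nabla(\mu))$ for $\lambda \neq \mu$ over $\mathbb{Z}$, both being standard consequences of tilting module theory for Chevalley groups. The alternative route, of computing ranks on both sides directly and matching them via up--down tableaux combinatorics, would be less uniform and would duplicate work already done in characteristic zero.
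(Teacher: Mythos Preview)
The paper does not prove this theorem at all; it simply quotes it from Doty--Hu \cite[Theorem 1.2]{DH}. So there is no ``paper's own proof'' to compare against, and your proposal is an attempt to supply what the paper takes for granted.

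Your construction of the action and the commutativity check are fine. The surjectivity argument, however, has a genuine logical gap. You write that over $\mathbb{Q}$ the map is surjective, hence the integral cokernel is torsion, and since the target $\End_{G_{n,\mathbb{Z}}}(V_{\mathbb{Z}}^{\otimes r})$ is torsion-free you conclude the map is already surjective over $\mathbb{Z}$. This implication is false: multiplication by $2$ on $\mathbb{Z}$ has torsion cokernel and torsion-free target but is not surjective, and after base change to $\mathbb{F}_2$ it becomes the zero map. In your situation the same obstruction arises: a torsion cokernel over $\mathbb{Z}$ can produce a non-trivial cokernel precisely over the fields of bad characteristic, which is exactly where you need the result.

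What is actually required is that the image of $\mathcal{B}_r(\delta)_{\mathbb{Z}}$ be a \emph{saturated} sublattice of $\End_{G_{n,\mathbb{Z}}}(V_{\mathbb{Z}}^{\otimes r})$, equivalently that the cokernel be torsion-free (and then zero, since it is torsion). This is not automatic and is the real content of the Doty--Hu argument: one must exhibit an explicit $\mathbb{Z}$-basis of the integral endomorphism ring lying in the image of the Brauer algebra, or else carry out the dimension count you dismissed as ``duplicating work''. The base-change property of tilting endomorphism rings tells you only that $\End_{G_n}(V_n^{\otimes r}) \simeq \End_{G_{n,\mathbb{Z}}}(V_{\mathbb{Z}}^{\otimes r}) \otimes_{\mathbb{Z}} k$; it does not by itself control the image of the Brauer algebra.
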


\begin{remarkcounter}
The Schur--Weyl duality for orthogonal groups gives rise to isomorphisms for large enough $n$, see e.g.  \cite[Section 3.4]{AST2}. We shall not need this here.
\end{remarkcounter}

We now divide our discussion into the odd and even cases.

\subsubsection{Type B}
In the odd case where $G_n$ has type $B_n$ our methods lead to the higher Jones quotient $\overline E_m^r$ of $\mathcal B_r(2m+1)$ for  $1 \leq m \leq (p-3)/2$. Noting that the Brauer algebras in question are those with an odd $\delta $ lying between $3$ and $p-2$ which we have already dealt with in Section \ref{quotients sp},  we shall leave most details to the reader. However, we do want to point out that the inductive formula for 
the dimensions of the simple modules for $\overline E_n^r$ in this case is more complicated than in the symplectic case. The reason is that for type $B$ the highest weight for the natural module is not minuscule. This means that instead of the direct analogue of Proposition \ref{inductive formula} we need to use the following general formula (with notation as in Section 2).

\begin{thm} (\cite[Equation 3.20(1)]{AP}). 
 Let $G$ be an arbitrary reductive group over $k$ and suppose $Q$ is a tilting module for $G$. If $\lambda$ is a weight belonging to the bottom dominant alcove $A(p)$, then
$$ (Q:T(\lambda)) = \sum_{w} (-1)^{\ell (w)} (Q:\Delta(w \cdot \lambda)),$$
where the sum runs over those $w \in W_p$ for which $w \cdot \lambda \in X^+$.
\end{thm}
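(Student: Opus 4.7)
Both sides of the claimed formula are additive in $Q$ under direct sums of tilting modules, so it suffices to verify the identity for each indecomposable $Q = T(\mu)$, $\mu \in X^+$, where it becomes
\[
\delta_{\lambda,\mu} \;=\; \sum_{w \in W_p,\; w\cdot\lambda \in X^+}(-1)^{\ell(w)}\,(T(\mu):\Delta(w\cdot\lambda)). \qquad (\ast)
\]
The strong linkage principle recalled in Remark \ref{alcove A} forces every nonzero summand on the right to satisfy $w\cdot\lambda \in W_p\cdot\mu$ and $w\cdot\lambda \le \mu$; in particular $(\ast)$ is trivially $0=0$ when $\mu \notin W_p\cdot\lambda$. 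When $\mu = \lambda$, minimality of $\lambda \in A(p)$ inside its dominant orbit (any other $w\cdot\lambda$ dominant must strictly exceed $\lambda$, by the usual alcove geometry of $W_p$ acting on $X^+$) forces $w = 1$ and $(\ast)$ reduces to $(T(\lambda):\Delta(\lambda)) = 1$.

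The substance of the proof is therefore the cancellation
\[
\sum_{w:\, w\cdot\lambda \in X^+}(-1)^{\ell(w)}(T(\mu):\Delta(w\cdot\lambda)) = 0 \qquad \text{for } \mu \in W_p\cdot\lambda\cap X^+,\; \mu > \lambda.
\]
To prove this I would pass to formal characters: expanding $\mathrm{ch}(T(\mu)) = \sum_\nu(T(\mu):\Delta(\nu))\,\chi(\nu)$ and using Weyl's character formula $\chi(\nu) = J(e^{\nu+\rho})/J(e^\rho)$ with $J = \sum_{v\in W}(-1)^{\ell(v)} v$ the finite antisymmetriser, the alternating sum on the left converts into the pairing of $\mathrm{ch}(T(\mu))$ with a $W_p$-antisymmetrisation of $e^{\lambda+\rho}$. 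Using the semi-direct decomposition $W_p = p\mathbb{Z}R^\vee \rtimes W$, the dominance constraint $w\cdot\lambda \in X^+$ selects exactly one representative of each $W$-coset, so that, after dividing through by $J(e^\rho)$, the expression collapses to $\chi(\lambda) = \mathrm{ch}(L(\lambda)) = \mathrm{ch}(T(\lambda))$ (the last equality because $\lambda \in A(p)$). Extracting the coefficient of $\chi(\mu)$ then gives $\delta_{\lambda,\mu}$, which vanishes under the running assumption $\mu \neq \lambda$.

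The main obstacle is the sign-and-length bookkeeping: writing each $w\in W_p$ as $w = t_{p\beta}\,v$ with $v\in W$ and $t_{p\beta}$ a translation by $p\beta \in p\mathbb{Z}R^\vee$, one must show that $(-1)^{\ell(w)}$ agrees with the sign $(-1)^{\ell(v)}$ produced by Weyl's antisymmetriser, up to a parity depending on $\beta$ which is then absorbed by the translation factor $e^{p\beta}$. Ensuring that the length function on $W_p$ lines up with the sign conventions of Weyl's formula — so that the dominant-chamber restriction in the sum on the left of $(\ast)$ matches the unconstrained alternating sum on the right of the character formula — is precisely the content of \cite[Equation 3.20(1)]{AP} and is the only nontrivial input in the argument.
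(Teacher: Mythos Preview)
The paper does not supply a proof of this theorem; it is quoted directly from \cite[Equation 3.20(1)]{AP}, so there is no argument in the paper to compare against.

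Your reductions are correct: additivity in $Q$ reduces to $Q=T(\mu)$, the linkage principle disposes of $\mu\notin W_p\cdot\lambda$, and minimality of $\lambda$ in its dominant $W_p$-orbit handles $\mu=\lambda$. The gap lies in the remaining case $\mu\in W_p\cdot\lambda$, $\mu>\lambda$. Your character manipulation does not establish the required cancellation. One can indeed rewrite the alternating sum as a pairing of $J(e^\rho)\,\mathrm{ch}\,T(\mu)$ against the formal $W_p$-antisymmetrisation of $e^{\lambda+\rho}$, and the factorisation $W_p=p\Z R^\vee\rtimes W$ does give $\sum_{w\in W_p}(-1)^{\ell(w)}e^{w(\lambda+\rho)}=\bigl(\sum_\gamma e^{p\gamma}\bigr)\cdot J(e^{\lambda+\rho})$; but this is only a tautological rewriting of the sum, not a computation of it. The functional $\chi(\nu)\mapsto(-1)^{\ell(w)}$ (for $\nu=w\cdot\lambda$ dominant) is \emph{not} the functional ``coefficient of $\chi(\lambda)$'', so nothing here forces the answer to be $\delta_{\lambda,\mu}$. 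Moreover, the sign bookkeeping you flag as ``the main obstacle'' is in fact the easy part: every translation in $W_p$ is a product of two affine reflections and hence has sign $+1$. The substantive point---that the Weyl multiplicities of $T(\mu)$ for $\mu\notin A(p)$ cancel in $W_p$-signed pairs---is precisely what you defer back to \cite{AP} in your final sentence, which is circular.

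A standard way to close the gap is induction on $\mu$ via wall-crossing: for $\mu\notin A(p)$ one realises $T(\mu)$ as a direct summand of $\Theta_s T(\mu')$ for some simple affine reflection $s$ with $\mu'=s\cdot\mu<\mu$. Since $\Theta_s$ replaces each Weyl factor $\Delta(\nu)$ by the pair $\Delta(\nu),\Delta(s\cdot\nu)$, the $W_p$-alternating sum vanishes on $\Theta_s T(\mu')$; by induction it also vanishes on the complementary summands, hence on $T(\mu)$.
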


\begin{examplecounter} \label{brauer2 p=7} Consider $p =7$. Then type $B$ leads to higher Jones algebras of $\mathcal B_r(3)$ and $\mathcal B_r(5)$. The reader may check that the recursively derived dimensions for the class of simple modules in these cases match (with proper identification of the labeling) with those listed in Table 2.  Note in particular that to get those for $\mathcal B_r(3)$ we need to decompose $V_1^{\underline \otimes r}$ into simple modules for $G_1$. The Lie algebra for $G_1$ is $\mathfrak{sl}_2$ and the natural $G_1$-module $V_1$ identifies with the simple $3$-dimensional $SL_2$-module. 
\end{examplecounter}

\subsubsection{Type D}
In the even case $G_n$ has type $D$. The module $V_n$ equals $\Delta_n(\epsilon_1)$ and its highest weight $\epsilon_1$ is minuscule. This means that we have

\begin{prop} \label{inductive formula O}
Let $n \geq 1$ and suppose $\lambda \in A_n(p)$. Then 
$$(V_n^{\otimes r}: T_n(\lambda)) = \sum_{i: \lambda \pm \epsilon_i \in A_n(p)} (V_n^{\otimes (r-1)}: T_n(\lambda \pm \epsilon_i)).$$
\end{prop}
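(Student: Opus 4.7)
The plan is to mirror the proof of Proposition \ref{inductive formula Sp} for the symplectic case, exploiting the fact that $\epsilon_1$ is minuscule and the explicit description of $A_n(p)$ in type $D$. We induct on the branching via the identity $V_n^{\otimes r} = V_n^{\otimes (r-1)} \otimes V_n$. Decomposing the $(r-1)$-fold tensor into indecomposable tilting modules yields
$$(V_n^{\otimes r}: T_n(\lambda)) = \sum_{\mu \in X_n^+} (V_n^{\otimes (r-1)}: T_n(\mu))\cdot (T_n(\mu) \otimes V_n : T_n(\lambda)),$$
so the task reduces to computing the multiplicity of $T_n(\lambda)$ in $T_n(\mu) \otimes V_n$ for $\lambda \in A_n(p)$.

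For $\mu \in A_n(p)$, Remark \ref{alcove A}a) gives $T_n(\mu) = \Delta_n(\mu)$. Since the weights of $V_n = \Delta_n(\epsilon_1)$ are $\pm \epsilon_1, \ldots, \pm \epsilon_n$ each with multiplicity one and $\epsilon_1$ is minuscule, the tensor product $\Delta_n(\mu) \otimes V_n$ carries a $\Delta$-filtration with factors $\Delta_n(\mu \pm \epsilon_i)$ for those $i$ making $\mu \pm \epsilon_i$ dominant. I will verify that each such $\mu \pm \epsilon_i$ lies in the closure $\overline{A_n(p)}$: using the type $D_n$ description this is the inequality $(\mu \pm \epsilon_i)_1 + (\mu \pm \epsilon_i)_2 \leq \mu_1 + \mu_2 + 1 \leq p - 2n + 3$ (with analogous checks for $n = 1, 2$). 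Appealing again to Remark \ref{alcove A}a), each factor $\Delta_n(\mu \pm \epsilon_i)$ equals $T_n(\mu \pm \epsilon_i)$ and is simple, so the $\Delta$-filtration splits as a direct sum of tilting summands. Consequently $T_n(\lambda)$ is a summand of $T_n(\mu) \otimes V_n$ precisely when $\mu = \lambda \mp \epsilon_i$ for some $i$ with $\mu \in A_n(p)$; rewriting, the contribution to the multiplicity is $\sum_{i: \lambda \pm \epsilon_i \in A_n(p)} (V_n^{\otimes (r-1)}: T_n(\lambda \pm \epsilon_i))$.

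For $\mu \notin A_n(p)$, the module $T_n(\mu)$ lies in the tensor ideal $\mathcal{N}$ of negligible tilting modules (Section \ref{Fusion}), so $T_n(\mu) \otimes V_n \in \mathcal{N}$ as well; since $T_n(\lambda)$ with $\lambda \in A_n(p)$ is non-negligible, it cannot appear as a summand, so these terms contribute zero. Summing over both cases yields the desired formula.

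The main obstacle is the closure check that $\mu \in A_n(p)$ forces each dominant $\mu \pm \epsilon_i$ into $\overline{A_n(p)}$, which is the mechanism that turns the $\Delta$-filtration of $\Delta_n(\mu) \otimes V_n$ into a genuine tilting direct sum decomposition. The special cases $n = 1, 2$ (where $A_n(p)$ is described differently in Section \ref{O}) need to be handled separately but are straightforward.
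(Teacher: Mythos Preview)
Your proof is correct and follows essentially the same route as the paper, which simply says ``Completely analogous to the proof of Proposition~\ref{inductive formula Sp}.'' You have unpacked that analogy in full: the minuscule property of $\epsilon_1$ gives the $\Delta$-filtration of $\Delta_n(\mu)\otimes V_n$, the closure check $\mu\pm\epsilon_i\in\overline{A_n(p)}$ forces each factor to be a simple tilting module so the filtration splits, and the negligible-ideal argument disposes of $\mu\notin A_n(p)$. This is exactly the mechanism behind the paper's one-line proof.
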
 

\begin{proof} Completely analogous to the proof of Proposition \ref{inductive formula Sp}.
\end{proof} 

Assume now $\delta \in [2, p+1]$ is even and define the following subsets of weights
$$ \overline \Lambda^r(2, p) = \{(r-2i) | 0 \leq r-2i \leq p-2\},$$
$$  \overline \Lambda^r(4, p) =\{(\lambda_1, \lambda_2) \in X_2^+ | (\lambda_1,|\lambda_2|) \in \Lambda^{r-2i} \text { for some $i$ with }  0 \leq r-2i \leq p-2\}, $$ 
and for $\delta > 4$ 
$$ \overline \Lambda^r(\delta, p)  = \{ (\lambda_1, \lambda_2, \cdots , \lambda_{\delta/2}) \in X_{\delta/2}^+ | (\lambda_1, \cdots , |\lambda_{\delta/2}|) \in \Lambda^{r-2i} \text { for some } i \leq r/2 \text { and } \lambda_1 + \lambda_2 \leq p-\delta + 2\}. $$

\vskip .3 cm

\begin{thm} \label{main brauer O}
Let $r > 0$ and  consider an even number $\delta \in [0,p+1]$. Suppose $\lambda \in \overline \Lambda^r(\delta, p)$. Then the dimension of the simple $\mathcal B_r(\delta)$-module $D_{\mathcal B_r(\delta)}(\lambda)$ is recursively determined by 
$$ \dim D_{\mathcal B_r(\delta)}(\lambda) = \sum_{i: \lambda \pm \epsilon_i \in \overline \Lambda^{(r-1)}(\delta, p)} \dim D_{\mathcal B_{r-1}(\delta)}(\lambda \pm \epsilon_i).$$
\end{thm}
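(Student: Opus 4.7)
The plan is to mirror verbatim the proof of Theorem \ref{main brauer Sp}, substituting the orthogonal Schur--Weyl duality (Theorem \ref{Schur-Weyl O}) for its symplectic counterpart and Proposition \ref{inductive formula O} for Proposition \ref{inductive formula Sp}. Since $\delta$ is even, we set $n=\delta/2$ and observe that $\dim V_n=2n=\delta$, so Theorem \ref{Schur-Weyl O} provides a surjection $\mathcal B_r(\delta) \twoheadrightarrow E_n^r$. Composing with the quotient from Proposition \ref{quotients O}, the higher Jones algebra $\overline E_{\delta/2}^r$ becomes a semisimple quotient of $\mathcal B_r(\delta)$.

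Next, I would identify the simple modules. By Example \ref{Ex1} and (\ref{dim simple/tilting}) the simples of $\overline E_{\delta/2}^r$ are labeled by those $\lambda\in A_{\delta/2}(p)$ with $(V_{\delta/2}^{\otimes r}:T_{\delta/2}(\lambda))\neq 0$, and their dimensions equal these tilting multiplicities. Because $V_{\delta/2}$ has weights $\pm\epsilon_1,\dots,\pm\epsilon_{\delta/2}$, only weights $\lambda$ with $|\lambda|\equiv r\ (\mathrm{mod}\ 2)$ and $|\lambda|\leq r$ can occur. Matching this with the bottom-alcove descriptions from Section \ref{O} (for $n=1$, $n=2$, and $n>2$ respectively) shows the labeling set coincides precisely with $\overline\Lambda^r(\delta,p)$, up to identifying a partition of $r-2i$ with the $X_{\delta/2}^+$-tuple whose absolute-value sequence equals that partition. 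One then invokes the cellular theory to identify $D_{\mathcal B_r(\delta)}(\lambda)$ (for $\lambda\in\overline\Lambda^r(\delta,p)$) with the simple $\overline E_{\delta/2}^r$-module carrying label $\lambda$: both are the head of the cell module $\Hom_{G_{\delta/2}}(\Delta_{\delta/2}(\lambda),V_{\delta/2}^{\otimes r})$ after passage to the semisimple quotient.

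Granted this identification, the recursion is a direct application of Proposition \ref{inductive formula O}, whose hypothesis is satisfied because $\epsilon_1$ is minuscule in type $D$: it expresses $(V_{\delta/2}^{\otimes r}:T_{\delta/2}(\lambda))$ as the sum of $(V_{\delta/2}^{\otimes(r-1)}:T_{\delta/2}(\lambda\pm\epsilon_i))$ over those $\lambda\pm\epsilon_i$ remaining in $A_{\delta/2}(p)$, which by the previous paragraph are exactly the elements $\lambda\pm\epsilon_i\in\overline\Lambda^{r-1}(\delta,p)$.

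The main obstacle I anticipate is the combinatorial bookkeeping specific to type $D$: dominant weights satisfy only $\lambda_{\delta/2-1}\geq|\lambda_{\delta/2}|$, so $\pm\lambda_{\delta/2}$ give two distinct dominant weights whenever $\lambda_{\delta/2}\neq0$, and in the low-rank cases $\delta/2=1,2$ the shape of $A_{\delta/2}(p)$ is exceptional. A careful case check is needed to confirm that the three pieces of the definition of $\overline\Lambda^r(\delta,p)$ faithfully reproduce the three alcove descriptions from Section \ref{O}, and that every element of $\overline\Lambda^r(\delta,p)$ is $p$-regular so that $D_{\mathcal B_r(\delta)}(\lambda)$ is defined and the cellular identification above is legitimate.
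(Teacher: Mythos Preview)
Your proposal is correct and follows essentially the same approach as the paper: combine the orthogonal Schur--Weyl duality (Theorem \ref{Schur-Weyl O}) with Proposition \ref{quotients O} to realize $\overline E_{\delta/2}^r$ as a semisimple quotient of $\mathcal B_r(\delta)$, then read off the recursion from Proposition \ref{inductive formula O} via the identification of simple-module dimensions with tilting multiplicities (\ref{dim simple/tilting}). Your discussion of the type $D$ bookkeeping (the sign of $\lambda_{\delta/2}$ and the low-rank alcove shapes) is more explicit than the paper's brief proof, which simply cites the ingredients without spelling out these checks.
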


\begin{proof} Combining Theorem \ref{Schur-Weyl O} with Theorem \ref{quotients O} we see that $\overline E_{(p-\delta)/2}^r$ is a semisimple quotient of $B_r(\delta)$. Then the theorem follows from Proposition \ref{inductive formula O} by recalling that the dimensions of the simple modules for
 $\overline E_{\delta/2}^r$ coincide with the tilting multiplicities in $V_{\delta/2}^{\otimes r}$, see (\refeq{dim simple/tilting}).
\end{proof}

\begin{remarkcounter} If $n \equiv \delta/2 \; (\mo \; p)$ for some even number $\delta \in [2,p+1]$, then $2n \equiv \delta \; (\mo \;p)$. Hence, the theorem describes a class of simple modules for $\mathcal B_r(2n)$ for all such $n$.
\end{remarkcounter}

\begin{examplecounter} Consider $p = 7$. Then the relevant $\delta$'s are $2, 4, 6, 8$. By Remark \ref{orthogonal barE}b we have that the higher Jones quotient algebra for $\mathcal B_r(8)$ is the trivial algebra $k$ (alternatively, observe that $\mathcal B_r(8) = \mathcal B_r(1)$ which we dealt with in Example \ref{brauer p=7}).  At the other extreme the (higher) Jones quotient of $\mathcal B_r(2)$ is also a quotient of the Temperley--Lieb algebra $TL_r(2)$.  This case is dealt with in \cite[Proposition 6.4]{A17}. So here we only consider the two remaining cases $\delta = 4$ and $\delta = 6$. We have
$$ \overline \Lambda^1(4,7) = \{(1,0)\},$$
$$ \overline \Lambda^2(4,7) = \{(2,0), (1,1), (1,-1), (0,0)\},$$
$$ \overline \Lambda^3(4,7) = \{(3,0), (2,1), (2,-1), (1,0)\},$$
$$ \overline \Lambda^r(4,7) = \begin{cases} \{(4,0), (2,2), (2,-2), (3,1) (3,-1), (2,0), (1,1), (1,-1), (0,0)\} \text { if $r \geq 4$ is even,} \\  \{(5,0), (3,2), (3,-2), (4,1), (4,-1), (3,0), (2,1), (2,-1), (1,0)\} \text { if $r \geq 5$ is odd.} \end{cases}$$
In Table 3 we have denoted these weights $\lambda^1(r), \cdots , \lambda^9(r)$.

Likewise, we have
$$ \overline \Lambda^1(6,7) = \{(1,0,0))\},$$
$$ \overline \Lambda^2(6,7) = \{(2,0,0), (1,1,0), (0,0,0)\},$$
$$ \overline \Lambda^r(6,7) = \begin{cases} \{(3,0,0), (2,1,0), (1,1,1), (1,1,-1), (1,0,0)\} \text { if $r \geq 3$ is odd.} \\  \{(2,1,1)), (2,1,-1)), (2,0,0), (1,1,0),(0,0,0)\} \text { if $r \geq 4$ is even.} \\  \end{cases}$$
In Table 3 we have denoted these weights $\mu^1(r), \cdots , \mu^5(r)$. In this table we have then listed the dimensions (computed via the algorithm in Theorem \ref{main brauer O}) for the simple modules for $\mathcal B_r(4)$, respectively $\mathcal B_r(6)$  corresponding to these sets of weights for $r \leq 10$.

\eject
\centerline {{ \it Table  3.  Dimensions of simple modules for $\mathcal B_r(\delta)$ when $p= 7$ and $\delta = 4 $ and $6$.}}
\vskip .5cm

\noindent
\begin{tabular}{ r| c  c c c c c c c c | c c c c c c}
         &&& &&$\delta =4$&&&&&&&$\delta = 6$&& \\ \hline
        r & $\lambda^1(r)$ & $\lambda^2(r)$ & $\lambda^3(r)$&$\lambda^4(r)$&$\lambda^5(r)$&$\lambda^6(r)$&$\lambda^7(r)$&$\lambda^8(r)$&$\lambda^9(r)$ & $\mu^1(r)$& $\mu^2(r)$ & $\mu^3(r)$ &$\mu^4(r)$ & $\mu^5(r)$    \\   \hline 
  
     1 &  & &  & &&&&&1 & & &&& 1& \\ 
  2 &&&&&&1 & 1  & 1 & 1 & &&1& 1 & 1   \\ 
  3 &&&&&&1& 2&2 & 4 & 1&2&1&1& 3  \\
  4 & 1 &2 & 2 & 3 & 3 & 9&6&6&4& 2 & 3 & 6 & 7 & 3 \\
  5 &1 & 5 & 5 & 4 & 4& 16 &20 & 20 &25 &6 & 18 &9&10&16 \\
  6 & 25& 25& 25& 45 & 45&81 & 45 & 45 & 25 &27&28&40&53 & 16&  \\
  7 &25 &70 &70 &70& 70 & 196& 196 & 196 & 196 &40&148&80 &81 &109 \\
  8 & 361,& 266 & 266 & 532 & 532 & 784 & 392 & 392 & 196 &228 &229 &297 &418  &109 \\
 9 & 361&798& 798 & 893 & 893 & 2209 & 1974 & 1974 & 1764  &297&1172&646&647&824 \\
10 & 4356 & 2772 & 2772 & 5874 & 5874 & 7921 & 3738 & 3738 & 1764 &1828&1829&2293&3289&824 \\
\end{tabular}
\vskip .5 cm
Together with Example \ref{brauer p=7} this example give a class of simple modules for Brauer algebras with parameter $\delta$ equal to any non-zero element of $\mathbb {F}_7$. 

\end{examplecounter}

Note that in the above example we were in type $D_1 = A_1$, $D_2 = A_1 \times A_1$ or $D_3 = A_3$ and we could have deduced the results from the Type A case treated in Section 4. We shall now give another example illustrating  type $ D_n$ computations  with $n > 3$. 
\begin{examplecounter}
Consider $p = 11$ and take $\delta = 10$. Then $\mathcal B_r(10)$ has the higher Jones quotient $\overline E_5^r$. If $r \geq 5$ the weight set $\overline \Lambda^r(10, 11)$ contains $7$ elements, namely 
$$ \{(1,1,1,1,-1), (2,1,1,1,0), (1,1,1,1,1), (3, 0,0,0,0), (2,1,0,0,0), (1,1,1,0,0), (1,0,0,0,0)\}$$
when $r$ is odd, and 
$$ \{(2,1,1,1, -1), (2,1,1,1,1), (2,1,1,0,0), (1,1,1,1,0), (2,0,0,0,0), (1,1,0,0,0), (0,0,0,0,0)\}$$
when $r$ is even.

If $r \in \{1,2,3,4\}$, the set  $\overline \Lambda^r(10, 11)$ consists of the last, the $3$ last, the $4$ last, and the $5$ last elements, respectively, in the above lists. 

In Table 4 we have listed the dimensions of the corresponding simple modules for $\mathcal B_r(10)$ for $r \leq 10$ using Theorem \ref{main brauer O}. We have denoted the above $7$ weights $\lambda^1(r), \cdots , \lambda^7(r)$ (in the given order).
\end{examplecounter}

\eject
\centerline{
{ \it Table  4.  Dimensions of simple modules for $\mathcal B_r(10)$ when $p= 11$.
}}
\vskip .5cm
\centerline {
 \begin{tabular}{ r| c  c c  c c c c|ccc}
   r & $\lambda^1(r)$ & $\lambda^2(r)$ & $\lambda^3(r)$ & $\lambda^4(r)$ & $\lambda^5(r)$ & $\lambda^6(r)$ &$\lambda^7(r)$    \\        \hline 
  1&  & &  & & & & 1& \\ 
  2 &  &&  &  & 1 & 1 & 1 &  \\ 
  3 && & & 1& 2 & 1 & 3 &  \\
  4 &  &&3 & 1 & 6 & 6 & 3&  \\
  5 &1&4 & 1 & 6 & 15 & 10 &15 & \\
  6 & 5&5 & 29& 16& 36&40 &15 &   \\
  7 &21 &55 &21 &36 &105& 85 & 91 & \\
  8 &76 & 76& 245&97 &232 & 281 &91 & \\
 9 &173& 494 &173&232&568 & 623 & 604& \\
10 &667& 667 &1685&840&1404 & 1795 & 604 & \\
\end{tabular}}
\vskip 1 cm

\section{Quantum Groups}

In the remaining sections $k$ will denote an arbitrary field. 

Let $\mathfrak g$ denote a simple complex Lie algebra. Then there is a quantum group $U_q = U_q(\mathfrak g)$  (a quantized enveloping algebra over $k$) associated with $\mathfrak g$. We shall be interested in the case where the quantum parameter $q$ is a root of unity in $k$ and we want to emphazise that the quantum group we are dealing with is the Lusztig version defined via $q$-divided powers, see e.g. \cite[Section 0]{APW}. This means that we start with the ``generic" quantum group $U_v = U_v(\mathfrak g)$ over $\Q(v)$ where $v$ is an indeterminate. Then we consider the $\Z[v,v^{-1}]$-subalgebra $U_{\Z[v,v^{-1}]}$ of $U_v$ generated by the quantum divided powers of the generators for $U_v$. When $q \in k\setminus 0$ we make $k$ into an $\Z[v,v^{-1}]$-algebra by specializing $v$ to $q$ and define $U_q$ as $U_q = U_{\Z[v,v^{-1}]} \otimes_{\Z[v,v^{-1}]} k$. This construction, of course, makes sense for arbitrary $q$, but if $q$ is not a root of unity all finite-dimensional $U_q$-modules are semisimple and our results are trivial. So in the following we always assume that $q$ is a root of unity and we denote by $\ell$ the order of $q$. When $\ell \in \{2, 3, 4, 6\}$ the (quantum) higher Jones algebras we introduce turn out to be trivial ($0$ or $k$) for all $\mathfrak g$ so we ignore these cases. We set $\ell' = \ord(q^2)$, i.e. $\ell' = \ell$, if $\ell$ is odd, and $\ell' = \ell/2$, if $\ell$ is even.

In this section we shall very briefly recall some of the key facts about $U_q$ and its representations relevant for our purposes. As the representation theory for $U_q$ is in many ways similar to the modular representation theory for $G$ that we have been dealing with in the previous sections, we shall leave most details to the reader. However, we want to emphazise one difference: if the root system associated with $\mathfrak g$ has two different root lengths then the case of even $\ell$ is quite different from the odd case (the affine Weyl groups in question are not the same). This phenomenon is illustrated in \cite[Section 6]{AS} where the fusion categories for type $B$ as well as the corresponding fusion rules visibly depend on the parity of $\ell$. The difference will also be apparent in Section 6.3 below where for instance the descriptions of the bottom dominant alcoves in the type $C$ case considered there depend on the parity of $\ell$.

Again, we start out with the general case and then specialize first to the general linear quantum groups, and then to the symplectic quantum groups. We omit treating the case of quantum groups corresponding to the orthogonal Lie algebras, because of the lack of a general version of Schur--Weyl duality in that case.

\subsection{Representation theory for Quantum Groups}
We have a triangular decomposition $U_q = U_q^- U_q^0U_q^+$ of $U_q$. If $n$ denotes the rank of $\mathfrak g$, then we set $X = \Z^n$ and identify each $\lambda \in X$ with a character of $U_q^0$ (see e.g. \cite[Lemma 1.1]{APW}).  These characters extend to $B_q = U_q^-U_q^0$ giving us the $1$-dimensional $B_q$-modules $k_\lambda, \lambda \in X$. As in Section 1.1 we denote by $R$ the root system for $\mathfrak g$  and consider $R$ as a subset of $X$. The set $S$ of simple roots corresponds to the generators of $U_q^+$ and we define the dominant cone $X^+ \subset X$ as before. The Weyl group $W$ is still the group generated by the reflections $s_{\alpha}$ with $\alpha \in S$.

Define the bottom dominant alcove in $X^+$ by
$$ A(\ell) = \begin{cases} \{\lambda \in X^+ | \langle \lambda + \rho, \alpha_0^{\vee} \rangle < \ell \} \text { if $\ell$ is odd,} \\ \{\lambda \in X^+ | \langle \lambda + \rho, \beta_0^{\vee} \rangle < \ell' \} \text { if $\ell$ is even.} \end{cases}$$
Here $\alpha_0$ is the highest short root and $\beta_0$ is the highest long root.

The affine Weyl group $W_\ell$ for $U_q$ is then the group generated by the reflections in the walls of $A(\ell)$. Note that, when $\ell$ is odd, $W_\ell$ is the affine Weyl group (scaled by $\ell$) associated with the dual root system of $R$, whereas if $\ell$ is even, $W_\ell$ is the affine Weyl group (scaled by $\ell'$) for $R$, cf. \cite[Section 3.17]{AP}.

Suppose $\lambda \in X^+$. Then we have modules $\Delta_q(\lambda), \nabla_q(\lambda), L_q(\lambda)$ and $T_q(\lambda)$ completely analogous to the $G$-modules in Section 2 with the same notation without the index $q$. 

The quantum linkage principle (see \cite{A03}) implies that if $L_q(\mu)$ is a composition factor of $\Delta_q(\lambda)$, then $\mu$ is strongly linked (by reflections from $W_\ell$) to $\lambda$. Likewise, if $\Delta_q(\mu)$ occurs in a Weyl filtration of $T_q(\lambda)$, then $\mu$ is strongly linked to $\lambda $.

The quantum linkage principle then gives the identities
$$ \Delta_q(\lambda) = \Delta_q(\lambda) = L_q(\lambda) = T_q(\lambda)  \text { for all } \lambda \in A(\ell),$$
which will be crucial for us in the following.

Suppose $Q$ is a general tilting module for $U_q$. Imitating the definitions in Section \ref{Fusion} we define the fusion summand and the negligible summand of $Q$ as follows
$$  Q^{\F}= \bigoplus_{\lambda  \in A(\ell)} T_q(\lambda)^{(Q:T_q(\lambda))} \text { and }  Q^{\Ne} = \bigoplus_{\lambda  \in X^+\setminus A(\ell)}T_q(\lambda)^{(Q:T_q(\lambda))}$$. 

The exact same arguments as in the modular case then give us the quantum analogue of Theorem \ref{fusion-quotient}
\begin{thm} \label{q-fusion-quotient}
Let $Q$ be an arbitrary tilting module for $U_q$. Then the natural map $\phi: \End_{U_q}(Q) \rightarrow \End_{U_q}(Q^{\F})$ is a surjective algebra homomorphism. The kernel of $\phi$ equals 
%$$\{h \in \End_{U_q}(Q) | \Tr_q(i_\lambda \circ h \circ \pi_\lambda) = 0 \text { for all homomorphisms } i_\lambda: T_q(\lambda) \rightarrow Q, \; \pi_\lambda: Q \rightarrow T_q(\lambda),\; \lambda \in X^+\}.$$
$$\{h \in \End_{U_q}(Q) | \Tr_q(i_\lambda \circ h \circ \pi_\lambda) = 0 \text { for all } i_\lambda \in \Hom_{U_q}( T_q(\lambda), Q), \; \pi_\lambda \in \Hom_{U_q}(Q, T_q(\lambda)),\; \lambda \in X^+\}.$$
\end{thm}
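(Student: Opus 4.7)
The plan is to transcribe the modular proof of Theorem \ref{fusion-quotient} into the quantum setting, replacing the ordinary trace $\Tr$ by the quantum trace $\Tr_q$ and the alcove $A(p)$ by $A(\ell)$ throughout. The two ingredients to re-establish in the quantum world are the analogues of Lemma \ref{quotient} and Lemma \ref{no composites}; once these are in place, the structure of the argument carries over verbatim.

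First I would record the quantum analogue of Lemma \ref{no composites}. Let $\lambda \in A(\ell)$ and $\mu \in X^+ \setminus A(\ell)$, and consider morphisms $f: T_q(\lambda) \to T_q(\mu)$ and $g: T_q(\mu) \to T_q(\lambda)$. By the identity $T_q(\lambda) = L_q(\lambda)$ for $\lambda \in A(\ell)$ (stated just before the theorem as a consequence of the quantum linkage principle), the composite $g \circ f$ is a scalar endomorphism of a simple module. If this scalar were non-zero, then $T_q(\lambda)$ would split off as a direct summand of $T_q(\mu)$, contradicting indecomposability. Hence $g \circ f = 0$, and the hypothesis of Lemma \ref{quotient} (whose proof is purely formal and applies to any $k$-linear category) is satisfied for the decomposition $Q = Q^{\F} \oplus Q^{\Ne}$. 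This yields the surjectivity of $\phi$.

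For the kernel characterization, I would replace the $p$-divisibility argument by the quantum dimension analogue. For $\lambda \notin A(\ell)$, the tilting module $T_q(\lambda)$ is negligible, meaning $\Tr_q(h) = 0$ for every $h \in \End_{U_q}(T_q(\lambda))$; this follows from the quantum linkage principle together with the standard translation argument from \cite[Section 5]{A92}, which was written in the quantum setting in the first place. For $\lambda \in A(\ell)$, $T_q(\lambda) = L_q(\lambda) = \Delta_q(\lambda)$ is simple, so any endomorphism is a scalar and $\Tr_q$ of a non-zero endomorphism equals a non-zero scalar times $\dim_q \Delta_q(\lambda)$, which is non-zero by the quantum Weyl dimension formula (the relevant $q$-integers $[\langle \lambda+\rho,\alpha^\vee \rangle]$ are non-zero precisely because $\lambda \in A(\ell)$). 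Consequently, $h \in \End_{U_q}(Q)$ lies in $\ker \phi$ if and only if $i_\lambda \circ h \circ \pi_\lambda = 0$ for every pair $i_\lambda, \pi_\lambda$ with $\lambda \in A(\ell)$, which by the previous sentence is equivalent to $\Tr_q(i_\lambda \circ h \circ \pi_\lambda) = 0$ for all $\lambda \in A(\ell)$, and this in turn (using negligibility at the other weights) is equivalent to the trace vanishing for all $\lambda \in X^+$.

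The main obstacle is the correct invocation of the trace tool: in the modular setting one uses the ordinary trace together with the divisibility $p \mid \dim T(\lambda)$ for $\lambda \notin A(p)$ and $p \nmid \dim \Delta(\lambda)$ for $\lambda \in A(p)$, whereas in the quantum setting the clean statement uses $\Tr_q$ and the dichotomy between negligible weights (quantum trace identically zero) and weights in $A(\ell)$ (simple tilting module with non-zero quantum dimension). The latter dichotomy is precisely the content of the fusion-quotient construction recalled in Section \ref{Fusion}, so once $\Tr$ is replaced by $\Tr_q$ the modular proof translates without further modification.
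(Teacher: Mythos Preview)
Your proposal is correct and follows exactly the approach the paper intends: the paper's own proof consists of the single sentence ``The exact same arguments as in the modular case then give us the quantum analogue of Theorem \ref{fusion-quotient}'', and you have faithfully transcribed that modular argument (Lemmas \ref{quotient} and \ref{no composites}, plus the trace dichotomy) into the quantum setting, correctly replacing $\Tr$ by $\Tr_q$ and the $p$-divisibility of $\dim T(\lambda)$ by the negligibility/quantum-dimension argument from \cite[Section 5]{A92}.
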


We also have a quantum fusion category (still denoted $\mathcal F$) and a fusion tensor product $\underline \otimes$ on it, see \cite[Section 4]{A92}. This leads to an analogue of Corollary 2.4.

\begin{cor} \label{q-fusion} Let $T$ be an arbitrary tilting module for $U_q$. Then for any $r \in \Z_{\geq 0}$  the natural homomorphism $\End_{U_q}(T^{\otimes r}) \rightarrow \End_{U_q}(T^{\underline \otimes r})$ is surjective.
\end{cor}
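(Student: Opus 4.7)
The plan is to mimic the proof of Corollary~\ref{fusion} almost verbatim, replacing every use of the modular results from Section~3 by their quantum counterparts already available in Section~6. The key observation is that Theorem~\ref{q-fusion-quotient} is stated for an \emph{arbitrary} tilting module $Q$ for $U_q$, so once we identify the right $Q$ and its fusion summand $Q^{\F}$, the corollary follows with no extra work.

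Concretely, I would first set $Q = T^{\otimes r}$. To apply Theorem~\ref{q-fusion-quotient} I need to know that $Q$ is itself tilting; this is the quantum analogue of the Wang--Donkin--Mathieu theorem (tensor products of modules admitting $\Delta_q$-filtrations again admit a $\Delta_q$-filtration, and similarly for $\nabla_q$), which is part of the standard tilting theory for $U_q$ recalled at the beginning of Section~6. Thus $Q$ decomposes as a direct sum of indecomposable tilting modules $T_q(\lambda)$.

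Next I would identify $Q^{\F} = T^{\underline\otimes r}$. By the definition of the fusion tensor product in the quantum fusion category $\mathcal{F}$, we have $T^{\underline\otimes r} = \pr(T^{\otimes r})$, where $\pr$ is the projection from the category of tilting modules onto $\mathcal{F}$, and at the level of indecomposable summands this projection retains precisely those $T_q(\lambda)$ with $\lambda \in A(\ell)$ and discards those with $\lambda \in X^+ \setminus A(\ell)$. This is exactly the definition of $Q^{\F}$ given just before Theorem~\ref{q-fusion-quotient}. (If $T$ itself is negligible, then $Q^{\F} = 0$ and the statement is trivial.)

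Finally, Theorem~\ref{q-fusion-quotient} applied to this particular $Q$ yields that the natural restriction-projection map $\End_{U_q}(T^{\otimes r}) \to \End_{U_q}(T^{\underline\otimes r})$ is surjective, which is the desired conclusion. There is no real obstacle here: the only step that is not entirely formal is verifying that tensor products of tilting modules remain tilting in the quantum root-of-unity setting, but this is a standard fact from the theory and is implicitly used throughout Section~6.
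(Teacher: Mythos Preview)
Your proposal is correct and matches the paper's approach exactly: the paper's proof of Corollary~\ref{fusion} sets $Q = T^{\otimes r}$, observes that $Q$ is tilting with $Q^{\F} = T^{\underline\otimes r}$, and then invokes Theorem~\ref{fusion-quotient}; the quantum version here is obtained by the same argument with Theorem~\ref{q-fusion-quotient} in place of Theorem~\ref{fusion-quotient}.
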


All of the above easily adapts to the case, where we replace the simple Lie algebra $\mathfrak g$ by the general linear Lie algebra $\mathfrak {gl}_n$ and we shall explore this case further in the next section.

Finally, the cellular algebra theory recalled in Section \ref{cellular} carries over verbatim. Alternatively, use the quantum framework from \cite[Section 5]{AST1} directly.

\subsection{The General Linear Quantum Group} \label{general linear q-group}

Let $n \geq 1$ and consider the general linear Lie algebra $\mathfrak {gl}_n$. The generic quantum group over $\Q(v)$ associated to $\mathfrak {gl_n}$ has a triangular decomposition in which the $0$ part identifies with a Laurent polynomial algebra $\Q(v)[K_1^{\pm 1}, \cdots , K_n^{\pm 1}]$. If $\lambda = (\lambda_1, \cdots , \lambda_n) \in X_n = \Z^n$ then $\lambda$ defines a character of this algebra which sends 
$K_i$ into $v^{\lambda_i}$. In particular, the element $\epsilon_i  \in X_n$ with $1$ as its $i$-th entry and $0$'s elsewhere defines the character which sends $K_i$ to $v$ and all other $K_j$'s to $1$. We then  have $\lambda = \sum_i \lambda_i \epsilon_i$.

Set $U_{q,n}$ equal to the quantum group for $\mathfrak {gl}_n$ over $k$ with parameter a root of unity $q \in k$. Then we still get for $\lambda \in X_n$ a character of $U_q^0$, see \cite[Section 9]{APW}. If we denote by $V_{q,n}$ the $n$-dimensional vector representation of $U_{q,n}$, then (in analogy with the classical case) $V_{q,n}$ has weights $\epsilon_1, \cdots, \epsilon_n$, all with multiplicity $1$. Moreover, we may (for all $\ell$) identify $V_{q,n}$ with $\Delta_q(\epsilon_1) = \nabla_q(\epsilon_1) = L_q(\epsilon_1) = T_q(\epsilon_1)$. 

The bottom alcove in $X_n$ is now denoted $A_n(\ell)$ and given by 
$$ A_n(\ell) = \{\lambda \in X_n | \lambda_1 \geq \lambda_2 \geq \cdots \geq \lambda_n \text { and } \lambda_1 - \lambda_n \leq \ell'
 -n\}.$$

As noted above,  $V_{q,n}$ is a tilting module. Hence, so are $V_{q,n}^{\otimes r}$ as well as the corresponding fusion summands $V_{q,n}^{\underline \otimes r}$  for all $r \in Z_{\geq 0}$. We set $E_{q,n}^r = \End_{U_{q,n}}(V_{q,n}^{\otimes r})$ and $\overline E_{q,n}^r = \End_{U_{q,n}}(V_{q,n}^{\underline \otimes r})$. These endomorphism algebras are then cellular algebras, and $\overline E_{q,n}^r$ is in fact semisimple (because $V_{q,n}^{\underline \otimes r}$ is a semisimple $U_{q,n}$-module). Moreover, by Corollary \ref{q-fusion} we have:
\begin{equation} \label {E to barE}
\text {The natural homomorphism } E_{q,n}^r \rightarrow \overline E_{q,n}^r \text { is surjective.}
\end{equation}

Arguing as in Section \ref{tensor powers} we also get: 
\begin{equation} \label{surj n>m}
\text {The ``restriction" homomorphisms }E_{q,n}^r \rightarrow E_{q,m}^r \text { are surjective for all } n \geq m \text { and all } r.
\end{equation}

\subsection{Quantum Symplectic Groups} \label{q-sp}

Set now $U_{q,n}$ equal to the quantum group corresponding to the simple Lie algebra $\mathfrak {sp}_{2n}$ of type $C_n$. The vector representation $V_{q,n} = \Delta_q(\epsilon_1)$ is then a tilting module for $U_{q,n}$. As in the corresponding classical case it has weights $\pm \epsilon_1, \cdots , \pm \epsilon_n$ .

The bottom alcove in $X_n$ is now denoted $A_n(\ell)$ and given by 
$$ A_n(\ell) = \begin{cases}  \{\lambda \in X_n | \lambda_1 \geq \lambda_2 \geq \cdots \geq \lambda_n \geq 0 \text { and } \lambda_1 + \lambda_2 \leq \ell - 2n \} \text { if $\ell$ is odd,}\\  \{\lambda \in X_n | \lambda_1 \geq \lambda_2 \geq \cdots \geq \lambda_n \geq 0 \text { and } \lambda_1\leq \ell' -n-1\} \text { if $\ell$ is even.} \end{cases}$$
In both the even and the odd case we have $A_n(\ell) \neq \emptyset$ if and only if  $\ell > 2n$. In the odd case $\epsilon_1$ belongs to $A_n(\ell)$ for $n = 1, 2, \cdots , (\ell -1)/2$, whereas in the even case the same is true for $n=1, 2, \cdots , (\ell -4)/2$.

Again in this case we get (with $E_{q,n}^r = \End_{U_{q,n}}(V_{q,n}^{\otimes r})$ and $\overline E_{q,n}^r = \End_{U_{q,n}}(V_{q,n}^{\underline \otimes r}$)):
\begin{equation} \label{surj E to Ebar qsp} 
\text {The natural homomorphisms } E_{q,n}^r \rightarrow \overline E_{q,n}^r \text { are surjective for all $n, r$.}
\end{equation}

\section{A class of simple modules for the Hecke algebras of symmetric groups}

We continue in this section to assume that $k$ is an arbitrary field and that $q \in k$ is a root of  order $\ell$. 

Let $r$ be a positive integer and denote by $H_r(q)$ the Hecke algebra of the symmetric group $S_r$ with parameter $q\in k$.  
%This is the algebra with generators $T_i$, $i=1,2, ,\cdots , r-1$  and relations 
%\begin{enumerate}
%\item $(T_i-q)(T_i +1) = 0$ for all $i$,
%\item $T_iT_j = T_jT_i$ when $|i-j| >1$
%\item $T_iT_jT_i = T_jT_iT_j$ when $|i-j| = 1$
%\end{enumerate}

Using the notation from Section \ref{general linear q-group} we then have the quantum Schur--Weyl duality:

\begin{thm} \label{q-Schur-Weyl}
The Hecke algebra $H_r(q)$ acts on the tensor power $V_{q,n}^{\otimes r}$ via the quantum $R$ matrix for $U_{q,n}$. This action commutes with the $U_{q,n}$-module structure on $V_{q,n}^{\otimes r}$ giving homomorphisms $H_r(q) \rightarrow E_{q,n}^r$ which are surjective for all $n$ and isomorphisms for $n \geq r$. 
\end{thm}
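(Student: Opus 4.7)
The plan is to construct the action directly from the universal $R$-matrix, check that the Hecke relations hold, and then establish surjectivity by base change from the generic case together with a dimension count when $n\geq r$.

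First, I would exploit that $U_{q,n}$ carries a braiding on its category of finite-dimensional modules. Applied to the vector representation $V = V_{q,n}$, the braiding $c_{V,V} \colon V\otimes V \to V\otimes V$ is $U_{q,n}$-linear, and for each $i=1,\dots,r-1$ the operator $T_i = \mathrm{id}^{\otimes (i-1)} \otimes c_{V,V} \otimes \mathrm{id}^{\otimes (r-i-1)}$ on $V^{\otimes r}$ commutes with the $U_{q,n}$-action. The quantum Yang--Baxter equation gives the braid relations $T_iT_{i+1}T_i = T_{i+1}T_iT_{i+1}$ and $T_iT_j = T_jT_i$ for $|i-j|\geq 2$. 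Since $V^{\otimes 2}$ has exactly two irreducible summands (generically $\Delta_q(2\epsilon_1)$ and $\Delta_q(\epsilon_1+\epsilon_2)$), $c_{V,V}$ has two eigenvalues whose ratio is $-q^{-2}$; after the standard rescaling one obtains $(T_i - q)(T_i + q^{-1}) = 0$. These are exactly the defining relations of $H_r(q)$, so we get an algebra homomorphism $\phi \colon H_r(q) \to E_{q,n}^r$.

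Second, I would establish surjectivity of $\phi$ by a lifting argument. Over the generic field $\Q(v)$, Jimbo's theorem gives surjectivity of the analogous map $\phi_v \colon H_r(v) \to \End_{U_v(\gll{n})}(V_v^{\otimes r})$. The vector representation has a natural $\mathcal{A}$-form $V_\mathcal{A}$ over $\mathcal{A} = \Z[v,v^{-1}]$, and $\End_{U_\mathcal{A}}(V_\mathcal{A}^{\otimes r})$ is a free $\mathcal{A}$-module on which $H_r(v)$ still acts. The key point is that $V_{q,n}^{\otimes r}$ is a tilting module for $U_{q,n}$, so its endomorphism algebra is the specialization of the $\mathcal{A}$-form; hence $\End_{U_\mathcal{A}}(V_\mathcal{A}^{\otimes r}) \otimes_\mathcal{A} k \;\simeq\; E_{q,n}^r$. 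Since surjectivity descends under base change, $\phi$ is surjective.

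Third, for $n\geq r$ I would compare dimensions. A Hecke algebra $H_r(q)$ has dimension $r!$. On the other hand, $E_{q,n}^r$ is cellular with cell modules indexed by partitions $\lambda \in \Lambda^r$ with at most $n$ parts, and when $n\geq r$ every partition of $r$ appears; moreover, for $n\geq r$ the $\Delta$-filtration multiplicities $(V^{\otimes r}\colon \Delta_q(\lambda))$ specialize to Jimbo's generic values, which equal the dimensions $f^\lambda$ of the classical Specht modules. Combining semisimplicity in the generic case with upper-semicontinuity of dimensions of endomorphism algebras under specialization gives $\dim E_{q,n}^r \leq \sum_{\lambda\vdash r} (f^\lambda)^2 = r!$, so the surjection $\phi$ is forced to be an isomorphism.

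The main obstacle is the middle step: making the base-change argument rigorous requires knowing that forming the endomorphism algebra of $V^{\otimes r}$ commutes with specialization from $\mathcal{A}$ to $k$. This is not automatic in general, but it follows from the fact that $V_\mathcal{A}^{\otimes r}$ is a tilting module in the sense of the $\mathcal{A}$-form theory of Section~3 (as adapted to the quantum case in Section~6.1), which ensures that both its indecomposable decomposition and its $\Hom$-spaces are preserved under the specialization $v \mapsto q$. Once this compatibility is in hand, the two remaining ingredients, namely the Hecke relations from the $R$-matrix spectrum and the $r!$ dimension count, are essentially routine.
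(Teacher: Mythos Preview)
The paper does not prove this theorem at all: immediately after the statement it simply writes ``This is the main result of \cite{DPS}.'' Your proposal therefore goes well beyond what the paper does, and should be judged on its own merits as an outline of the Du--Parshall--Scott argument.

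Steps one and three are essentially fine (in fact in step three your inequality is an equality: since $V_{q,n}^{\otimes r}$ is tilting, $\dim_k E_{q,n}^r = \sum_\lambda (V^{\otimes r}:\Delta_q(\lambda))^2$ is determined by characters and hence equals the generic value $r!$ once $n\geq r$). The genuine gap is in step two, and it is not the obstacle you flag in your final paragraph. You correctly note that tilting theory gives $\End_{U_{\mathcal A}}(V_{\mathcal A}^{\otimes r}) \otimes_{\mathcal A} k \simeq E_{q,n}^r$; that compatibility is not the problem. The problem is that Jimbo's theorem gives surjectivity of $\phi_v$ only over the fraction field $\Q(v)$, and this does \emph{not} imply that the integral map $\phi_{\mathcal A}\colon H_r(\mathcal A) \to \End_{U_{\mathcal A}}(V_{\mathcal A}^{\otimes r})$ is surjective. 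The image of $\phi_{\mathcal A}$ is an $\mathcal A$-submodule of full rank in a free $\mathcal A$-module, but it may well be a proper sublattice; after tensoring with $k$ such a sublattice need not surject. Right exactness of $\otimes_{\mathcal A} k$ lets surjectivity descend from $\mathcal A$ to $k$, but you have not established surjectivity over $\mathcal A$. Proving that integral surjectivity is exactly the substantive content of \cite{DPS}.

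One way to repair the argument without redoing \cite{DPS} in full: first treat $n\geq r$ by proving injectivity of $\phi$ directly (for instance via a cellular or Kazhdan--Lusztig basis argument showing $H_r(q)$ acts faithfully on $V_{q,n}^{\otimes r}$), and combine with the exact dimension count $\dim_k E_{q,n}^r = r!$ to get the isomorphism. Surjectivity for smaller $n$ then follows by composing with the restriction surjections in (\ref{surj n>m}).
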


This is the main result of \cite{DPS}.

\begin{cor} \label{q-Jones}
Suppose $r \geq \ell$. Then the Hecke algebra $H_r(q)$ has the following semisimple quotients $\overline E_{q,1}^r, \overline E_{q,2}^r, \cdots ,\overline  E_{q, \ell -1}^r$.
\end{cor}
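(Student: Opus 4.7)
The plan is to imitate the classical argument that produced Theorem \ref{ss quotients}, using the quantum Schur--Weyl duality in Theorem \ref{q-Schur-Weyl} as the starting point and then chaining the two surjectivity statements \eqref{E to barE} and \eqref{surj n>m} from Section \ref{general linear q-group}. Concretely, I would realize each $\overline E_{q,m}^r$, for $1 \leq m \leq \ell - 1$, as a quotient of $H_r(q)$ by composing three natural maps.

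First, choose any integer $n \geq r$. By Theorem \ref{q-Schur-Weyl} the Schur--Weyl map $H_r(q) \to E_{q,n}^r$ is an isomorphism, so we may identify $H_r(q) \cong E_{q,n}^r$. Second, for each $m$ with $1 \leq m \leq \ell - 1$ the hypothesis $r \geq \ell$ forces $n \geq r \geq \ell > m$, and \eqref{surj n>m} supplies a surjective restriction homomorphism $E_{q,n}^r \twoheadrightarrow E_{q,m}^r$. Third, \eqref{E to barE} gives a surjection $E_{q,m}^r \twoheadrightarrow \overline E_{q,m}^r$ onto the fusion endomorphism algebra. Composing these three maps yields the desired surjection $H_r(q) \twoheadrightarrow \overline E_{q,m}^r$.

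It remains to note that each target is semisimple. This was already observed in Section \ref{general linear q-group}: the fusion summand $V_{q,m}^{\underline \otimes r}$ is a semisimple $U_{q,m}$-module, being a direct sum of the simple tilting modules $T_q(\lambda) = L_q(\lambda)$ with $\lambda \in A_m(\ell)$, so $\overline E_{q,m}^r = \End_{U_{q,m}}(V_{q,m}^{\underline \otimes r})$ is semisimple by the quantum analogue of Theorem \ref{ss}.

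There is essentially no hard step here; the statement is a bookkeeping consequence of the results assembled in Section \ref{general linear q-group}. The only point worth flagging is the role of the hypothesis $r \geq \ell$: for $r < \ell$ the tensor power $V_{q,n}^{\otimes r}$ has no negligible indecomposable summands (all highest weights appearing lie in $A_n(\ell)$ when $n$ is large), so the listed quotients $\overline E_{q,m}^r$ would coincide with $E_{q,m}^r$ and carry no genuinely new information; the bound $r \geq \ell$ is the threshold beyond which the fusion truncation becomes nontrivial and the algebras $\overline E_{q,m}^r$ are genuine proper semisimple quotients of $H_r(q)$.
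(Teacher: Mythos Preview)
Your proof is correct and follows essentially the same route as the paper: identify $H_r(q)$ with $E_{q,n}^r$ for $n \geq r$ via Theorem \ref{q-Schur-Weyl}, then compose the surjections \eqref{surj n>m} and \eqref{E to barE}; the paper simply fixes $n = r$.

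One small correction to your closing commentary (which does not affect the proof itself): the claim that $V_{q,n}^{\otimes r}$ has no negligible summands when $r < \ell$ and $n$ is large is false. A partition $\lambda$ of $r$ with at most $n$ parts has $\lambda_n = 0$ once $n > r$, and then $\lambda \in A_n(\ell)$ requires $\lambda_1 \leq \ell' - n$, which fails for large $n$. The hypothesis $r \geq \ell$ is there, parallel to the modular discussion at the start of Section \ref{class of simple}, to set aside the range where the representation theory of $H_r(q)$ is already well understood; it is not what makes the chain of surjections go through.
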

\begin{proof}
By Theorem \ref{q-Schur-Weyl} we have $H_r(q) \simeq E_{q,r}^r$. Then the corollary follows from (\refeq{surj n>m}) and (\refeq{E to barE}) .
\end{proof}

\begin{remarkcounter} \begin{enumerate}
\item 
The semisimple quotients of $H_r(q)$ listed in Corollary \ref{q-Jones} are obvious generalisations of the Jones algebras introduced in \cite{ILZ}, and as explained in the introduction this is the reason why we use the name `higher Jones algebras' in this paper.
 \item In analogy with 
the modular case we see that  $ E_{q,1}^r = k = \overline E_{q,\ell-1}^r $ for all $r$. 
\end{enumerate}
\end{remarkcounter}

The simple modules for $H_r(q)$ are parametrized by the set of $\ell$-regular partitions of $r$. We denote the simple $H_r(q)$-module associated with such a partition $\lambda$ by $D_{q,r}(\lambda)$. Our aim is to derive an algorithm for computing the dimensions of a special class of simple $H_r(q)$-modules, namely those coming from the higher Jones algebras.

In analogy with the notation in Section \ref{class of simple} we set 
$$\overline \Lambda^r(\ell) = \{\lambda = (\lambda_1, \lambda_2, \cdots ,\lambda_m) | \lambda \text { is a partition of $r$ and } \lambda \in A_m(\ell) \text { for some } m < \ell' \}.$$
So $\overline \Lambda^r(\ell) $ consists of those partitions of $r$ which have at most $m < \ell'$ non-zero terms and satisfy $\lambda_1 - \lambda_m \leq \ell' - m$.  Clearly, the partitions in $\overline \Lambda^r(\ell)$ are all $\ell'$-regular.

The result in Proposition \ref{inductive formula} carries over unchanged to the quantum case and leads to the following analogue of Theorem \ref{main symm}.
\begin{thm} \label{main q-symm}
Let $r > 0$ and suppose $\lambda \in \overline \Lambda^r(\ell)$. Then the dimension of the simple $H_r(q)$-module $D_{q,r}(\lambda)$ is recursively determined by 
$$ \dim D_{q,r}(\lambda) = \sum_{i: \lambda - \epsilon_i \in \overline \Lambda^{(r-1)}(\ell)} \dim D_{q,r-1}(\lambda - \epsilon_i).$$
\end{thm}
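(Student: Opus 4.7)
The plan is to transport the proof of Theorem \ref{main symm} verbatim through the quantum framework set up in Section 6, since every ingredient used in the modular case has an announced quantum counterpart.

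First, I would identify $H_r(q) \simeq E_{q,r}^r$ via Theorem \ref{q-Schur-Weyl}, so that the Specht module of $H_r(q)$ labelled by a partition $\mu$ of $r$ becomes the cell module $C_{q,r}(\mu) = \Hom_{U_{q,r}}(\Delta_q(\mu), V_{q,r}^{\otimes r})$ of the cellular algebra $E_{q,r}^r$, using the quantum version of the cellular structure mentioned at the end of Section 6.1. Next, I would prove the quantum analogue of Proposition \ref{restriction of Specht}: if $\mu$ has at most $m$ non-zero parts, then the natural map $C_{q,r}(\mu) \to C_{q,m}(\mu)$ is an isomorphism. The modular argument translates directly: one applies quantum Frobenius reciprocity (available in the APW framework) to the parabolic sub-quantum group $P_I$ of $U_{q,n}$ corresponding to $I = \{\alpha_1, \ldots, \alpha_{n-2}\}$, writes $\nabla_I(\mu) = \nabla_{q,n-1}(\mu)$ when $\mu_n = 0$, and then uses the $L_I$-decomposition (\ref{restriction}) together with the vanishing (\ref{no cross-homs}), both of which hold in the quantum setting because $V_{q,n}$ restricts to $U_{q,n-1} \otimes U_{q,1}$ in the same way, with identical weight data.

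Having reduced to $C_{q,m}(\mu)$, the surjection $V_{q,m}^{\otimes r} \twoheadrightarrow V_{q,m}^{\underline{\otimes} r}$ provided by Corollary \ref{q-fusion} induces a surjection $C_{q,m}(\mu) \twoheadrightarrow \overline{C}_{q,m}(\mu) := \Hom_{U_{q,m}}(\Delta_q(\mu), V_{q,m}^{\underline{\otimes} r})$, and the latter is a cell module for the semisimple quotient $\overline{E}_{q,m}^r$ of $H_r(q)$ (Corollary \ref{q-Jones}). The quantum version of Theorem \ref{ss} (available through the cellular framework of Section 5 of \cite{AST1}, as mentioned at the end of Section 6.1) guarantees that $\overline{C}_{q,m}(\lambda) = D_{q,r}(\lambda)$ whenever $\lambda \in A_m(\ell) \cap \overline{\Lambda}^r(\ell)$, and $(\ref{dim simple/tilting})$ gives $\dim D_{q,r}(\lambda) = (V_{q,m}^{\underline{\otimes} r} : T_q(\lambda))$. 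Applying the quantum Pieri formula (the quantum analogue of Proposition \ref{inductive formula}, which the author notes carries over unchanged because $\epsilon_1$ is still minuscule and the fusion product of $\Delta_q(\lambda)$ with $V_{q,m}$ is the direct sum of $\Delta_q(\lambda + \epsilon_i)$ for those $i$ with $\lambda + \epsilon_i \in A_m(\ell)$) yields the asserted recursion.

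The main obstacle is the verification of the quantum analogue of Proposition \ref{restriction of Specht}, because this is the one step whose proof is not simply cited from earlier in the paper; it requires unpacking the parabolic induction, Frobenius reciprocity, and weight decomposition for the Lusztig form $U_{q,n}$ of $\mathfrak{gl}_n$. Everything else is either an immediate consequence of the quantum counterparts already established (Theorems \ref{q-fusion-quotient}, \ref{q-Schur-Weyl}, Corollaries \ref{q-fusion}, \ref{q-Jones}) or a recycling of the dimension formula $(\ref{dim simple/tilting})$.
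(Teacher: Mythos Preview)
Your proposal is correct and follows exactly the approach the paper takes: the paper does not give a separate proof of Theorem \ref{main q-symm} but simply asserts that Proposition \ref{inductive formula} carries over unchanged and that the theorem is the quantum analogue of Theorem \ref{main symm}. You have faithfully unpacked that same argument, and your identification of the quantum version of Proposition \ref{restriction of Specht} as the one ingredient requiring explicit verification (via quantum parabolic induction and the same weight separation) is accurate and more detailed than what the paper itself provides.
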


This theorem allows us to determine the dimensions of a class of simple modules for $H_r(q)$ just like we did for symmetric groups in Section \ref{class of simple}. The only difference is that $\ell$ in contrast to $p$ may now take any value in $\Z_{>0}$. We illustrate by a couple of examples.

\begin{examplecounter}
Let $\ell = 8$, i.e. $q$ is a root of unity of order $8$. In this case $\overline \Lambda^r(8)$ consists of the trivial partition of $r$ (corresponding to the trivial module for $H_r(q)$), the unique $3$-parts partition $\nu$ of $r$ with $\nu_1 - \nu_3 \leq 1$, and the $2$ parts partitions $(s+1,s-1)$ and $(s,s)$, if $r = 2s$ is even, respectively $(s, s-1)$, if $r = 2s-1$ is odd. It is easy to deduce from Theorem \ref{main q-symm} that the partitions with $2$ parts all correspond to simple $H_r(q)$-modules of dimension $2^s$.
\end{examplecounter}

\begin{examplecounter}
Consider the case $\ell = 12$. Here $\overline \Lambda^r(12)$ consists of the trivial partition $(r)$, the unique partition $\nu$ with $5$ parts satisfying $\nu_1 - \nu_5 \leq 1$, the partitions $\lambda^1(r), \lambda^2(r), \lambda^3(r)$  with $2$-parts
$$ \{\lambda^1(r), \lambda^2(r), \lambda^3(r)\} =  \begin{cases} \{(s+2, s-2), (s+1, s-1), (s,s)\} \text { if } r = 2s, \\ \{(s+2, s-1), (s+1,s)\} \text { if } r = 2s +1; \end{cases}$$
the partitions $\mu^1(r), \mu^2(r), \mu^3(r), \mu^r(4)$ with $3$ parts
$$ \{\mu^1(r), \mu^2(r), \mu^3(r), \mu^4(r)\} =  \begin{cases} \{(s+2,s-1,s-1),(s+1, s+1, s-2), (s+1, s, s-1), (s,s,s)\}\\ \text { if } r = 3s, \\ \{(s+2,s, s-1), (s+1,s+1, s-1), (s+1,s,s)\} \text { if } r = 3s +1,\\  \{(s+2,s+1, s-1), (s+2,s, s), (s+1,s+1,s)\} \text { if } r = 3s +2;\\  \end{cases}$$
and the partitions $\eta^1(r), \eta^2(r), \eta^3(r)$ with $4$ parts
$$ \{\eta^1(r), \eta^2(r), \eta^3(r)\} =  \begin{cases} \{(s+1,s+1,s-1,s-1), (s+1,s,s, s-1), (s,s,s,s)\} \text { if } r = 4s, \\ \{(s+1,s+1,s, s-1), (s+1,s,s, s)\} \text { if } r = 4s +1,\\  \{(s+2,s, s,s), (s+1,s+1, s+1,s-1), (s+1,s+1,s,s)\} \text { if } r = 4s +2,\\  \{(s+2,s+1,s,s), (s+1,s+1,s+1,s) \} \text { if } r = 4s + 3.\end{cases}$$
Here the listed partitions involving a zero or a negative number (these occur only for small $r$) should be deleted. In these  cases as well as in the cases where a set with only $2$ elements is listed it is understood that the corresponding or missing $\lambda$, $ \mu$ or $\eta$ does not occur.

We can use Theorem \ref{main q-symm} to compute the dimensions of the simple modules for $ H_r(q)$ where $q$ a root of unity having order $12$.  In Table 5 we have listed the results for the first few values of $r$. As we know that both the trivial partition and the partition $\nu$ always correspond to simple modules of dimension $1$ we have not included these two partitions in the table.
\end{examplecounter} 
\eject 
\centerline{
{ \it Table  5.  Dimensions of simple modules for $H_r(q)$ with $\ell = 12$.}}

\vskip .5cm

\centerline
{\begin{tabular}{ r| c  c c| c c c c | c c c| c}
   & $\lambda^r(1)$ & $\lambda^2(r)$ & $\lambda^3(r)$ & $\mu^1(r)$ & $\mu^2(r)$& $\mu^3(r)$& $\mu^4(r)$ &$\eta^1(r)$ & $\eta^2(r)$ &  $\eta^3(r)$    \\  \hline 
   1 &  &&1 &  &  & 1 &&  &&1 \\ 
  2 &  & 1 &1 & &1&1 &   &1&&1&  \\
  3 && 1 &2 & 1 & & 2 &  1 &2  &1&&\\
  4 &1& 3 & 2 & 3& 2 &3&&2& 3& 1&\\
  5 & 4 & 5& & 5 &6  &5 &&5&4&&  \\
  6 &4 &9 &5& 6 & 5 &16&5 & 4 &5& 9&\\
  7 & 13& 14&&22 & 5 &21 && 13  & 14&\\
 8 & 13 &27&14&27&43 & 26 & & 13 &27 &14&\\
 9 & 40 &41&&43 & 27 & 96 & 26 & 40 & 41&&\\
10 & 40 &81&41&139 & 123 & 122 & & 41 & 40 & 81\\
\end{tabular}}

\section{A class of simples modules for BMW-algebras}

Denote by $x, v, z$  be three indeterminates and set $R = \Z[v, v^{-1}, x, z]/((1-x)z + (v - v^{-1}))$. Let $r \geq 1$ be an integer and consider the general $3$-parameter $BMW$-algebra $BMW_r(R)$ over $R$ as in \cite[Definition 3.1]{Hu}.
As an $R$-module $BMW_r(R)$ is free of rank (2r-1)!! (with basis indexed by Brauer diagrams).

As in the previous sections we denote by $k$ an arbitrary field containing a root of unity $q$ of order $\ell$. We make $k$ into an $R$-algebra by specializing $v$ to $-q^{2n+1}$, $z$ to $q-q^{-1}$, and $x$ to $1 - \sum_{i=-n}^n q^{2i}$. Then the $BMW$-algebra over $k$ that we shall work with is
$$ BMW_r(-q^{2n+1}, q) = BMW_r(R) \otimes_R k.$$
For $q = 1$ it turns out that $BMW_r(-q^{2n+1}, q)$ may be identified with the Brauer algebra $\mathcal B_r(-2m)$, see the remarks after Definition 3.1 in \cite{Hu}. We treated the Brauer algebras in Section \ref{Brauer} so in this section we shall assume $q \neq 1$.

Using the notation from Section \ref{q-sp} the quantum analogue  \cite[Theorem 1.5]{Hu} of the Schur--Weyl duality for symplectic groups says:

\begin{thm} \label{qsp-Schur-Weyl}
The algebra $BMW_r(-q^{2n+1}, q)$  acts naturally on the tensor power $V_{q,n}^{\otimes r}$. This action commutes with the $U_{q,n}$-module structure on $V_{q,n}^{\otimes r}$ giving homomorphisms $BMW_r(-q^{2n+1},q) \rightarrow E_{q,n}^r$ which are surjective for all $n$. 
\end{thm}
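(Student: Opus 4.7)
The plan is to construct the action by hand using the quantum $R$-matrix for $U_{q,n}$ together with the projection onto the trivial summand of $V_{q,n}\otimes V_{q,n}$, verify that these operators satisfy the defining relations of $BMW_r(-q^{2n+1},q)$, and finally establish surjectivity by reducing to the generic/classical situation.

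First I would fix the $R$-matrix $\check R\colon V_{q,n}\otimes V_{q,n}\to V_{q,n}\otimes V_{q,n}$ coming from the quasi-triangular structure of $U_{q,n}$. Since $\epsilon_1$ is minuscule, one has a multiplicity-free decomposition $V_{q,n}\otimes V_{q,n}=L_q(2\epsilon_1)\oplus L_q(\epsilon_1+\epsilon_2)\oplus L_q(0)$ (valid in the tilting/Weyl filtration sense for any $\ell$ large enough; the three components are tilting summands with distinct highest weights), and $\check R$ acts as a scalar on each component. Define $g_i\in\End(V_{q,n}^{\otimes r})$ to be $\check R$ placed on strands $(i,i+1)$ and $e_i$ to be the corresponding scalar multiple of the projector onto the $L_q(0)$-summand on strands $(i,i+1)$, with the scalars chosen so that the eigenvalues of $g_i$ are $q,-q^{-1},q^{2n+1}$ (or their negatives, depending on the normalization used in Hu's definition). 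Both $g_i$ and $e_i$ commute with the $U_{q,n}$-action: the $R$-matrix does so by general quasi-triangular theory, and the isotypic projector is $U_{q,n}$-equivariant by construction.

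Next I would verify the defining relations of $BMW_r$. The braid relations $g_ig_{i+1}g_i=g_{i+1}g_ig_{i+1}$ follow from the quantum Yang--Baxter equation satisfied by $\check R$. The cubic Birman--Wenzl relation $(g_i-q)(g_i+q^{-1})(g_i-v)=0$ for $v=-q^{2n+1}$ encodes exactly the three $\check R$-eigenvalues above. The identities involving $e_i$, such as $g_ie_i=ve_i=e_ig_i$, $e_ig_{i\pm1}e_i=v^{-1}e_i$, and $e_i^2=xe_i$, are local; they reduce to computations inside $V_{q,n}^{\otimes 2}$ and $V_{q,n}^{\otimes 3}$ and follow from the known decomposition of these tensor powers into $U_{q,n}$-isotypic components together with the eigenvalues of $\check R$. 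The scalar $x=1-\sum_{i=-n}^n q^{2i}$ appears precisely because it is (minus) the quantum dimension of $V_{q,n}$ minus one, which is what the composite $e_i^2$ picks up.

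The principal obstacle is surjectivity. The strategy I would follow is deformation from the generic case. Over $\mathbb{Q}(v)$, the algebras $BMW_r(-v^{2n+1},v)$ and $\End_{U_v(\mathfrak{sp}_{2n})}(V_v^{\otimes r})$ are both semisimple and free of the same rank by the classical Schur--Weyl duality for $SP_{2n}$ (the generic quantum situation is a flat deformation of the classical one, and the Brauer dimensions $(2r-1)!!$ match in both pictures). Hence the map from BMW to the endomorphism algebra is an isomorphism generically. Passing to the Lusztig $\mathcal{A}$-form, both sides have compatible $\mathcal{A}$-lattices, and after specializing $v\mapsto q\in k$, one only needs that the generators of the generic endomorphism algebra remain in the image; since they are given by the same $R$-matrix/projector formulas and these specialize unambiguously, surjectivity of the specialized map follows. (The target $E_{q,n}^r$ may drop in dimension compared with the generic case, but that only helps surjectivity.) Flatness/base-change behavior of $\End_{U_q}(V_{q,n}^{\otimes r})$ at this level is controlled by the tilting property of $V_{q,n}^{\otimes r}$ and the cellular structure recalled in Section \ref{cellular}, which guarantees that generators of the generic endomorphism ring specialize to generators of the specialized one.
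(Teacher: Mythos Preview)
The paper does not prove this theorem at all: it is quoted directly from \cite[Theorem 1.5]{Hu} and used as a black box. So there is no ``paper's own proof'' to compare against; you are attempting to reproduce Hu's result from scratch.

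Your outline has the right shape for the \emph{existence} of the action, but two points need tightening. First, defining $e_i$ as ``a scalar multiple of the projector onto the $L_q(0)$-summand'' presupposes that $V_{q,n}\otimes V_{q,n}$ splits as a direct sum of simples, which can fail at small roots of unity (your parenthetical ``for any $\ell$ large enough'' concedes this). The robust definition, valid for all $q$, uses the $U_{q,n}$-invariant evaluation $V_{q,n}\otimes V_{q,n}\to k$ and its dual coevaluation; their composite is the $e_i$ of the tangle calculus and makes sense regardless of semisimplicity. Second, the assertion that over $\mathbb Q(v)$ the BMW algebra and $\End_{U_v}(V_v^{\otimes r})$ are ``free of the same rank'' is false for $n<r$; only surjectivity holds generically, not isomorphism.

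The genuine gap is in the surjectivity step. Knowing that the generic map $BMW_r(-v^{2n+1},v)\to \End_{U_v}(V_v^{\otimes r})$ is surjective over $\mathbb Q(v)$ does \emph{not} imply that its specialization at $v=q$ is surjective: a map of free $\Z[v,v^{-1}]$-modules can be surjective after inverting everything yet have proper image after reduction (e.g.\ multiplication by $v-q$). Your last sentence claims that the tilting property ``guarantees that generators of the generic endomorphism ring specialize to generators of the specialized one''; tilting theory gives base change $\End_{U_{\mathcal A}}(V_{\mathcal A}^{\otimes r})\otimes_{\mathcal A}k\cong \End_{U_q}(V_{q}^{\otimes r})$, but it says nothing about whether the \emph{image} of the integral BMW algebra is all of $\End_{U_{\mathcal A}}(V_{\mathcal A}^{\otimes r})$, which is what you would need. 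Hu's proof in \cite{Hu} avoids this trap by a different route (via the quantized coordinate algebra for type $C$ and an explicit analysis), and that extra input is precisely what your sketch is missing.
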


\begin{cor} \begin{enumerate}
\item  If $\ell$ is odd, 
 then the $BMW$-algebra $BMW_r(-q^{2n+1},q)$ surjects onto the semisimple algebra  $\overline E_{q,n}^r$ for $n= 1, 2, \cdots , (\ell - 1)/2$ and $r> 0$.
\item  If $\ell$ is even, 
 then the $BMW$-algebra $BMW_r(-q^{2n+1},q)$ surjects onto the semisimple algebra  $\overline E_{q,n}^r$ for $n= 1, 2, \cdots , (\ell - 4)/2$ and $r> 0$. 
\end{enumerate}
\end{cor}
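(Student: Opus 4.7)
The plan is to obtain the asserted surjection by composing two surjections already established in the paper, and then to verify that the stated ranges of $n$ are exactly those for which the higher Jones algebra $\overline E_{q,n}^r$ is genuinely a semisimple quotient (i.e.\ non-zero, or more precisely arising from the non-negligible natural module).

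First, Theorem \ref{qsp-Schur-Weyl} provides, for every $n\geq 1$ and $r>0$, a surjective algebra homomorphism
\[
\Psi_n^r \colon BMW_r(-q^{2n+1},q) \twoheadrightarrow E_{q,n}^r.
\]
Next, the identity (\ref{surj E to Ebar qsp}), which is Corollary \ref{q-fusion} specialised to the symplectic quantum case with $T=V_{q,n}$, gives a surjection
\[
\pi_n^r \colon E_{q,n}^r \twoheadrightarrow \overline E_{q,n}^r.
\]
Composing, the map $\pi_n^r \circ \Psi_n^r$ is the desired surjection of the $BMW$-algebra onto $\overline E_{q,n}^r$. The fact that $\overline E_{q,n}^r$ is semisimple is the quantum analogue of the statement in Example \ref{Ex1}: the fusion summand $V_{q,n}^{\underline\otimes r}$ is a direct sum of modules $T_q(\lambda)=L_q(\lambda)$ with $\lambda\in A_n(\ell)$ and hence is semisimple as a $U_{q,n}$-module, so by the quantum analogue of Theorem \ref{ss} its endomorphism algebra is a direct sum of matrix algebras over~$k$.

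It remains to justify the precise ranges of $n$ in (1) and (2). For the semisimple algebra $\overline E_{q,n}^r$ to be non-trivial we need $V_{q,n}^{\underline\otimes r}\neq 0$, which requires that $V_{q,n}$ itself lies in the fusion category $\mathcal F$, i.e.\ that its highest weight $\epsilon_1$ belongs to the bottom dominant alcove $A_n(\ell)$. Using the explicit description of $A_n(\ell)$ recalled in Section \ref{q-sp}, the condition $\epsilon_1\in A_n(\ell)$ translates, in the odd case, into $n\leq (\ell-1)/2$ and, in the even case, into $n\leq (\ell-4)/2$. This is exactly the range appearing in the two clauses of the corollary. (Outside these ranges, $V_{q,n}$ is negligible, so $\overline E_{q,n}^r=0$ and the surjection is still trivially valid, but uninteresting.)

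There is essentially no serious obstacle here; the only mild point to check carefully is the bookkeeping in the second paragraph, namely that the two types of inequalities defining $A_n(\ell)$ in the odd and even cases both yield $\epsilon_1\in A_n(\ell)$ precisely under the stated upper bounds on $n$. This is a direct substitution of $\lambda=\epsilon_1$ into the inequalities displayed in Section \ref{q-sp}, so no new technical input is required.
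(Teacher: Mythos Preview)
Your proof is correct and follows essentially the same approach as the paper's own proof: compose the Schur--Weyl surjection of Theorem \ref{qsp-Schur-Weyl} with the surjection (\ref{surj E to Ebar qsp}), and then invoke the observation from Section \ref{q-sp} that $\epsilon_1\in A_n(\ell)$ precisely for the listed ranges of $n$. Your write-up is more detailed (you spell out semisimplicity via the quantum analogue of Theorem \ref{ss} and make explicit the substitution $\lambda=\epsilon_1$), but the logical skeleton is identical.
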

\begin{proof}
By Theorem \ref{qsp-Schur-Weyl} we have $BMW_r(-q^{2n+1}, q)$ surjects onto $ E_{q,n}^r$ for all $n, r$ and hence also on $\overline E_{q,n}^r$ by (\refeq {surj E to Ebar qsp}). In Section \ref{q-sp} we observed that these latter algebras are non-zero for the $n$'s listed in the corollary. 
\end{proof}

Let  $\lambda$ is a partition of $r-2i$ for some $i \leq r/2$. In analogy with the Brauer algebra case we denote the simple $BMW_r(-q^{2n+1}, q)$-module corresponding to $\lambda$ by $D_{BMW_r(-q^{2n+1}, q)}(\lambda)$.
Recall the definition of $A_n(\ell)$ from Section \ref{q-sp} and set for any $r>0$
$$ \overline \Lambda^r(n, \ell) =  (\Lambda^r \cup \Lambda^{r-2} \cup \cdots ) \cap A_{n}(\ell).$$
Then arguments similar to the ones used above give

\begin{thm} \label{main BMW}
Let $r > 0$.
\begin{enumerate}
\item Suppose $\ell$ is odd. Let $n \in \{1, 2, \cdots (\ell - 1)/2\}$ and $\lambda \in \overline \Lambda^r(n, \ell )$. Then the dimension of the simple $BMW_r(-q^{2n+1}, q)$-module $D_{BMW_r(-q^{2n+1}, q)}(\lambda)$ is recursively determined by 
$$ \dim D_{BMW_r(-q^{2n+1}, q)}(\lambda) = \sum_{i: \lambda \pm \epsilon_i \in \overline \Lambda^{(r-1)}(\delta, \ell)} \dim D_{BMW_{r-1}(-q^{2n+1}, q)}(\lambda \pm \epsilon_i).$$

\item Suppose $\ell$ is even. Let $n \in \{1, 2, \cdots (\ell - 4)/2\}$ and $\lambda \in \overline \Lambda^r(n, \ell )$. Then the dimension of the simple $BMW_r(-q^{2n+1}, q)$-module $D_{BMW_r(-q^{2n+1}, q)}(\lambda)$ is recursively determined by 
$$ \dim D_{BMW_r(-q^{2n+1}, q)}(\lambda) = \sum_{i: \lambda \pm \epsilon_i \in \overline \Lambda^{(r-1)}(\delta, \ell)} \dim D_{BMW_{r-1}(-q^{2n+1}, q)}(\lambda \pm \epsilon_i).$$
\end{enumerate}
\end{thm}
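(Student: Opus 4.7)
The plan is to follow the same pattern used in Theorems \ref{main brauer Sp} and \ref{main q-symm}, transferring the computation from the simple modules for the $BMW$-algebra to tilting multiplicities inside fusion tensor powers of the natural module for the quantum symplectic group. Both parts (1) and (2) will run along exactly the same lines, with the only difference being the shape of the bottom dominant alcove $A_n(\ell)$ in the two parities of $\ell$.

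First I would use Theorem \ref{qsp-Schur-Weyl} combined with (\ref{surj E to Ebar qsp}) to conclude that $\overline E_{q,n}^r$ is a semisimple quotient of $BMW_r(-q^{2n+1},q)$ for each $n$ in the range listed in each part. By the cellular structure recalled in Section \ref{cellular} and its quantum counterpart (see the end of Section 6.1), the simple modules for $\overline E_{q,n}^r$ are indexed by $A_n(\ell) \cap \Lambda_{0,V_{q,n}}^r$, which under Schur--Weyl duality (and the standard indexation of simples for $BMW_r(-q^{2n+1},q)$ by partitions of $r, r-2, \ldots$) corresponds precisely to $\overline\Lambda^r(n,\ell)$. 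Via (\ref{dim simple/tilting}), for $\lambda \in \overline\Lambda^r(n,\ell)$ we have
\[
\dim D_{BMW_r(-q^{2n+1},q)}(\lambda) \;=\; \bigl(V_{q,n}^{\underline{\otimes} r} : T_q(\lambda)\bigr) \;=\; \bigl(V_{q,n}^{\otimes r} : T_q(\lambda)\bigr),
\]
where the last equality uses that, for $\lambda \in A_n(\ell)$, the multiplicity of $T_q(\lambda)$ is unaffected by removing the negligible summand.

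Next I would establish the quantum analogue of the Pieri-type branching formula of Proposition \ref{inductive formula Sp}: for any $\lambda \in A_n(\ell)$,
\[
\bigl(V_{q,n}^{\otimes r}:T_q(\lambda)\bigr) \;=\; \sum_{i:\;\lambda\pm\epsilon_i \in A_n(\ell)} \bigl(V_{q,n}^{\otimes(r-1)}:T_q(\lambda\pm\epsilon_i)\bigr).
\]
The argument is identical to the classical one: since $\epsilon_1$ is minuscule, the Weyl filtration factors of $\Delta_q(\mu)\otimes V_{q,n}$ are the $\Delta_q(\mu\pm\epsilon_i)$ lying in $X_n^+$, each with multiplicity one; for $\mu \in A_n(\ell)$ the translates $\mu\pm\epsilon_i$ lie in the closure of $A_n(\ell)$, so passage to the fusion tensor product retains exactly those summands with $\mu\pm\epsilon_i \in A_n(\ell)$. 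Substituting this into the identification of dimensions above yields the recursion in both (1) and (2).

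The only place where the parity of $\ell$ intervenes is in the precise description of $A_n(\ell)$ and the corresponding range of $n$ for which $A_n(\ell)$ contains $\epsilon_1$, but this information has already been recorded in Section \ref{q-sp}, so the two cases (1) and (2) are handled simultaneously by the above argument. The main subtlety to check is that the fusion-retained summands are exactly those indexed by $\overline\Lambda^{r-1}(n,\ell)$ on the right-hand side of the recursion, i.e.\ that passing from $\Delta_q(\lambda)\otimes V_{q,n}$ to $\Delta_q(\lambda)\,\underline{\otimes}\,V_{q,n}$ discards precisely the summands whose highest weight lands on a wall of $A_n(\ell)$; this is immediate from the quantum linkage principle and the definition of the fusion summand, and is the only step where one must be careful about the even-$\ell$ alcove shape from Section \ref{q-sp}.
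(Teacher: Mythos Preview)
Your proposal is correct and follows exactly the pattern the paper intends: the paper does not give a separate proof of Theorem \ref{main BMW} at all, but simply prefaces it with ``arguments similar to the ones used above give'', referring to the proofs of Theorems \ref{main brauer Sp} and \ref{main q-symm}. Your write-up spells out precisely those arguments --- Schur--Weyl duality plus (\ref{surj E to Ebar qsp}) to obtain the semisimple quotient, (\ref{dim simple/tilting}) to identify the simple dimension with a tilting multiplicity, and the quantum minuscule Pieri rule analogous to Proposition \ref{inductive formula Sp} --- so you are in complete agreement with the paper's approach.
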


\begin{examplecounter}
We shall illustrate Theorem \ref{main BMW} in the case $\ell$ is even (the odd case is equivalent to the Brauer case in Section \ref{Brauer}). So we take $\ell = 10$. Then the relevant values of $n$ are $1, 2$ and $3$. The weight set $\overline \Lambda^r(1,10)$ contains $2$ elements  (except for $r =1$)  $\lambda^1(r), \lambda^2(r)$, namely $ (2), (0)$, when $ r$ is even, and $ (3), (1)$, when $r$ is odd. Likewise, $\overline \Lambda^r(2,10)$ contains $2$ elements  when $r$ is odd (except for $r=1$) and $4$ elements, when $r$ is even (except for $r = 2$). We denote these weights $\mu^1(r), \mu^2(r), (\mu^3(r), (\mu^4(r))$. They are $(2,2), (2,0), (1,1), (0,0)$, when $ r$ is even, and $ (2,1), (1,0)$, when $r$ is odd. Finally, $\overline \Lambda^r(3,10)$ consists of $2$ elements $\nu^1(r), \nu^2(r)$, namely $(1,1,0), (0,0,0)$, when $r$ is even, and $(1,1,1), (1,0,0)$, when $r$ is odd (except $r=1$).

In Table 6 we have in row $r$ listed (in the order given above) the dimensions of the simple modules for $ B_r(-q^{2n+1}, q) $ for $r \leq 10$. These numbers are computed recursively using Theorem \ref{main BMW}.
\end{examplecounter}

{ \it Table  6.  Dimensions of simple modules for $BMW_r(-q^{2n+1}, q)$ when $\ell = 10$ and $n = 1 , 3, 5$.}
\vskip .5cm

\centerline {
\begin{tabular}{ r| c  c  |c c c c |c c| c } 
         &$n=1$&&&$n=2$&&&$n=3$& \\ \hline
        r & $\lambda^1(r)$ & $\lambda^2(r)$ & $\mu^1(r)$ & $\mu^2(r)$ & $\mu^3(r)$& $\mu^4(r)$ & $\nu^1(r)$ & $ \nu^2(r)$   \\   \hline 
  1 &  & 1&  & 1 & &&&1 & \\ 
  2 & 1 & 1 &  1& 1 & 1 & 1 & 1 & 1  \\ 
  3 & 1&2& 2& 3& &  & 1 & 2  \\
  4 & 3 &2 & 2& 5 & 5 &3& 3& 2 \\
  5 &3 & 5 & 12 & 13 & & &3 & 5\\
  6 & 8 & 5& 12& 25&25 &13&8 & 5  \\
  7 &8 &13 &62 &63&&  & 8 & 13\\
  8 & 21& 13&62 &125 & 125 &63& 21 & 13\\
 9 & 21 &44&312&313 &&  & 21 & 34\\
10 & 65 &44&312&625 & 625 & 313&55 & 34\\
\end{tabular}}

\vskip 1 cm

\end{document}